\tikzset{->-/.style={decoration={markings,mark=at position #1 with {\arrow{>}}},postaction={decorate}}}
\numberwithin{equation}{section}
\newtheorem{theorem}{Theorem}[section]
\newtheorem{lemma}[theorem]{Lemma}
\newtheorem{corollary}[theorem]{Corollary}
\newtheorem{proposition}[theorem]{Proposition}
\theoremstyle{definition}
\newtheorem{definition}[theorem]{Definition}
\newtheorem{example}[theorem]{Example}
\newtheorem{remark}[theorem]{Remark}
\renewcommand\labelenumi{(\roman{enumi})}
\renewcommand\theenumi\labelenumi
\renewcommand{\epsilon}{\varepsilon}
\newcommand{\A}{\mathcal{A}}
\newcommand{\B}{\mathcal{B}}
\newcommand{\C}{\mathbb{C}}
\newcommand{\D}{\mathcal{D}}
\newcommand{\F}{\mathcal{F}}
\newcommand{\FF}{\mathbb{F}}
\renewcommand{\H}{\mathcal{H}}
\newcommand{\cL}{\mathcal{L}}
\newcommand{\M}{\mathcal{M}}
\newcommand{\N}{\mathbb{N}}
\newcommand{\R}{\mathbb{R}}
\newcommand{\bS}{\mathbb{S}}
\newcommand{\X}{\mathbb{X}}
\newcommand{\ev}{\operatorname{ev}}
\newcommand{\Ev}{\operatorname{Ev}}
\newcommand{\tr}{\operatorname{tr}}
\newcommand{\vN}{\operatorname{vN}}
\newcommand{\Tr}{\operatorname{Tr}}
\newcommand{\im}{\operatorname{im}}
\newcommand{\dom}{\operatorname{dom}}
\newcommand{\rank}{\operatorname{rank}}
\renewcommand{\1}{\mathbf{1}}
\newcommand{\0}{\mathbf{0}}
\newcommand{\ootimes}{\mathbin{\overline{\otimes}}}
\newcommand{\full}{\mathrm{full}}
\renewcommand{\star}{\bigstar}
\def\moverlay{\mathpalette\mov@rlay}
\def\mov@rlay#1#2{\leavevmode\vtop{%
\baselineskip\z@skip \lineskiplimit-\maxdimen
\ialign{\hfil$#1##$\hfil\cr#2\crcr}}}
\def\plangle{\moverlay{(\cr<}}
\def\prangle{\moverlay{)\cr>}}
\title[The free field]{The free field: realization via unbounded operators and Atiyah property}
\author[T. Mai]{Tobias Mai}
\address{Saarland University, Faculty of Mathematics, D-66123 Saarbr\"ucken, Germany}
\email{mai@math.uni-sb.de}
\author[R. Speicher]{Roland Speicher}
\address{Saarland University, Faculty of Mathematics, D-66123 Saarbr\"ucken, Germany}
\email{speicher@math.uni-sb.de}
\author[S. Yin]{Sheng Yin}
\address{Saarland University, Faculty of Mathematics, D-66123 Saarbr\"ucken, Germany}
\email{yin@math.uni-sb.de}
\date{\today}
\thanks{We thank Ken Dykema for discussions about the relation between the Atiyah property and the rational closure, as well as providing us with the example in \ref{ex:Dykema_Pascoe}. We also thank Christophe Reutenauer and Konrad Schrempf for discussions about the free field and providing relevant literature.\\
This work has been supported by the ERC Advanced Grant NCDFP 339760 and by the SFB-TRR 195, Project I.12.}
\keywords{free field, noncommutative rational functions, 
Atiyah property, free probability, free entropy dimension, zero divisor}
\subjclass[2000]{46L54 (12E15)}
\begin{document}

\begin{abstract}

Let $X_1,\dots,X_n$ be operators in a finite von Neumann algebra and consider their division closure in the affiliated unbounded operators. 
We address the question when this division closure is a skew field (aka division ring) and when it is the free skew field. We show that the first property is equivalent to the strong Atiyah property and that the second property can be characterized in terms of the non-commutative distribution of $X_1,\dots,X_n$. More precisely, $X_1,\dots,X_n$ generate the free skew field if and only if there exist no non-zero finite rank operators $T_1,\dots,T_n$ such that $\sum_i[T_i,X_i]=0$. Sufficient conditions for this are the maximality of the free entropy dimension or the existence of a dual system of $X_1,\dots,X_n$. Our general theory is not restricted to selfadjoint operators and thus does also include and recover the result of Linnell that the generators of the free group give the free skew field.

We give also consequences of our result for the question of atoms in the distribution of rational functions in free variables or in the asymptotic eigenvalue distribution of matrices over polynomials in asymptotically free random matrices.


\end{abstract}

\maketitle

\tableofcontents

\section{Introduction}

In the last years there has been quite some progress on understanding qualitative and quantitative properties of 
the limit of multivariate random matrix models on one side and of
the non-commutative distribution of several (in general, non-commuting) operators on infinite-dimensional Hilbert spaces on the other side.
Those two occurrences of non-commutative distributions are closely related; namely, free probability theory has taught us that the limit of random matrix ensembles is, in many situations, given by operators in interesting operator algebras. Hence random matrices can tell us something about operator algebras and operator theory provides tools for dealing with asymptotic properties of random matrices.

There are two prominent limits of random matrix models: the limit of independent GUE matrices is given by free semicircular variables $S_1,\dots,S_n$; and the limit of independent Haar unitary random matrices is given by free Haar unitaries $U_1,\dots,U_n$. 
Those constitute benchmarks of non-commutative distributions to which we would like to compare other situations, given by general operators $X_1,\dots,X_n$ (which might be limits of other random matrix models).
We know that the von Neumann algebra generated by $S_1,\dots,S_n$ and by $U_1,\dots,U_n$ is in both cases the free group factor $L(\FF_n)$. Results about von Neumann algebras for general $(X_1,\dots,X_n)$ are usually quite hard to achieve. We will here not address such von Neumann algebra questions, but instead change the perspective to a more algebraic one.

Consider bounded operators $X_1,\dots,X_n$ acting on a Hilbert space $\H$. Let us denote by $\C\langle X_1,\dots,X_n\rangle\subset B(\H)$ the algebra generated by those operators. Then we know that in the case of the $S_i$ or the $U_i$ we do not have any non-trivial algebraic relations between the $S_i$ or between the $U_i$, and hence in those cases the generated algebras are just the free algebra of non-commutative polynomials $\C\langle x_1,\dots,x_n\rangle$ in non-commuting indeterminates $x_1,\dots,x_n$. The same is true for many other operators $X_1,\dots,X_n$ -- and that's the situation we will restrict to in the following. Instead of taking now an analytic closure of $\C\langle X_1,\dots,X_n\rangle$ in the strong operator topology within the bounded operators, and thus getting a von Neumann algebra $\M:=\vN(X_1,\dots,X_n)\subset B(\H)$, we will push the algebraic setting to its limit, by taking an algebraic closure, also allowing inverses. For this we will consider the division closure of $\C\langle X_1,\dots,X_n\rangle$ within the unbounded operators affiliated to $\M$. One should note that we are always working in the setting of finite von Neumann algebras, which means that the affiliated unbounded operators have all the nice properties one knows from the finite-dimensional setting. In particular, the set $\A$ of such unbounded affiliated operators forms a $*$-algebra and $A\in\A$ is invertible (as an unbounded operator) if and only if its kernel is trivial. We will denote the division closure of $\C\langle X_1,\dots,X_n\rangle$ by
$\C\plangle X_1,\dots,X_n\prangle$; this is, by definition, the smallest subalgebra of $\A$, which contains $\C\langle X_1,\dots,X_n\rangle$ and is closed under taking inverses, whenever possible in $\A$.

The only situation where such a division closure has been determined so far is the case of free Haar unitaries. Those are the generators of the free group and  Linnell showed in \cite{Lin93} that their division closure is the so-called ``free skew field'' (or just ``free field'')
$\C\plangle x_1,\dots,x_n\prangle$. The latter is a purely algebraic object, also known under the name of ``non-commutative rational functions''. It is a skew field (aka division ring) and 
is given by all meaningful rational expressions in the non-commuting formal variables $x_1,\dots,x_n$, where two such expressions are being identified if they can be transformed into each other by algebraic manipulations. The existence of such an object is quite non-trivial, and was established by Amitsur \cite{Ami66} and extensively studied by Cohn \cite{Coh95,Coh06}, Cohn and Reutenauer \cite{CR94,CR99}, and also, more recently, in \cite{KV12,LR13,GGOW16,Vol18,Sch18}.

We will show as one of our main results that for many operator tuples -- in particular, for free semicirculars --  we will get as their division closure the free field. Actually, we will give precise characterizations for when this will happen. Here is our main theorem on this. After the theorem we will explain the used notations.

\begin{theorem}\label{thm:intro-main}
Let $(\M,\tau)$ be a tracial $W^{\ast}$-probability space (i.e., $\M$ is a von Neumann algebra and $\tau:\M\to\C$ is a faithful normal trace on $\M$) and $\A$ the $\ast$-algebra of unbounded operators affiliated to $\M$. Consider a 
tuple $X=(X_{1},\dots,X_{n})$ of (not necessarily selfadjoint) operators in $\M$.
Then the following statements are equivalent.
\begin{enumerate}
\item
The evaluation map $$\ev_{X}:\C\left\langle x_{1},\dots,x_{n}\right\rangle \rightarrow\A$$ -- which is the unital homomorphism sending $x_i$ to $X_i$ -- extends to an injective homomorphism 
$$\Ev_{X}: \C\plangle x_{1},\dots,x_{n}\prangle\rightarrow\A,$$ 
whose image is the division closure $\C\plangle X_1,\dots,X_n\prangle$; in this sense, the division closure $\C\plangle X_1,\dots,X_n\prangle$ ``is'' the free field $\C\plangle x_{1},\dots,x_{n}\prangle$.
\item 
For any $N\in\N$ and $P\in M_{N}(\C\left\langle x_{1},\dots,x_{n}\right\rangle )$ we have: if $P$ is linear and full, then $P(X)\in M_{N}(\A)$ is invertible.
\item 
For any $N\in\N$ and $P\in M_{N}(\C\left\langle x_{1},\dots,x_{n}\right\rangle )$ we have: if $P$ is full, then $P(X)\in M_{N}(\A)$ is invertible.
\item 
For any $N\in\N$ and $P\in M_{N}(\C\left\langle x_{1},\dots,x_{n}\right\rangle )$ we have: $\rank(P(X))=\rho(P)$.
\item 
$\Delta(X_1,\dots,X_n)=n$.

\end{enumerate}
\end{theorem}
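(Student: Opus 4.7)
The plan is to treat the theorem as a bridge between four algebraic conditions (i)--(iv), which express in different guises that the division closure inside $\A$ is a free-field realization of $\C\plangle x_1,\dots,x_n\prangle$, and one analytic condition (v) expressing maximality of the commutator-based invariant $\Delta$. I would first establish the block (i) $\Leftrightarrow$ (ii) $\Leftrightarrow$ (iii) $\Leftrightarrow$ (iv) by purely algebraic means, then prove (iv) $\Leftrightarrow$ (v) separately, and combine.

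For the algebraic block, (i) $\Leftrightarrow$ (iii) rests on Cohn's universal construction: the free skew field $\C\plangle x_1,\dots,x_n\prangle$ is the universal localization of $\C\langle x_1,\dots,x_n\rangle$ at the set of all full matrices, so $\ev_X$ extends to an injective homomorphism $\Ev_X$ on the free field exactly when every full $P$ is sent to an invertible $P(X)$. The implication (iii) $\Rightarrow$ (ii) is trivial, and for (ii) $\Rightarrow$ (iii) I would invoke the linearization trick (Higman--Cohn) that associates to any $P\in M_N(\C\langle x_1,\dots,x_n\rangle)$ a linear pencil $L=A_0+A_1 x_1+\dots+A_n x_n$ of possibly larger size whose fullness and evaluation-rank are synchronized with those of $P$. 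For (iii) $\Leftrightarrow$ (iv), the inner rank $\rho(P)=\min\{k: P=AB,\ A\in M_{N,k},\ B\in M_{k,N}\}$ gives the trivial inequality $\rank(P(X))\le\rho(P)$, while the reverse inequality is obtained by exhibiting a full $\rho(P)\times\rho(P)$ subfactor of $P$ and applying (iii) to conclude that it evaluates to something invertible.

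The decisive step is (iv) $\Leftrightarrow$ (v). The invariant $\Delta(X)$ measures $n$ minus the dimension of the space of finite-rank tuples $(T_1,\dots,T_n)$ subject to $\sum_i [T_i,X_i]=0$. To prove (v) $\Rightarrow$ (iv), I would translate a non-trivial kernel vector of $P(X)$, for a full $P$ (which by linearization I may assume linear), into a finite-rank tuple satisfying the commutator relation: starting from a rank-one projection onto the defect space of $P(X)$ and coupling it with the matrix coefficients $A_1,\dots,A_n$ of the pencil, the identity $P(X)\xi=0$ is recast as exactly the desired $\sum_i[T_i,X_i]=0$. Conversely, a non-trivial finite-rank solution of $\sum_i[T_i,X_i]=0$ should be encoded back as the witness of a rank deficit of a carefully designed linear full pencil $L(X)$, contradicting (iv).

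The main obstacle will be this dictionary between rank defects of matrices $P(X)$ and finite-rank commutator tuples. In one direction, a potentially complicated matrix $P$ must be distilled into a very rigid object — an $n$-tuple satisfying a single linear commutator identity — while in the other direction one has to build full matrices with prescribed kernel behaviour from an abstract element of $\ker\bigl((T_i)\mapsto\sum_i[T_i,X_i]\bigr)$. Both steps require the finiteness of $\tau$ and an integrality statement for $\rank P(X)$ in the spirit of the strong Atiyah property, so that one can pass freely between finite-rank operators on $\H$ and bona fide elements of $\M\ootimes\M^{\op}$; controlling this passage while preserving fullness of the associated pencil is where I expect the technical heart of the argument to lie.
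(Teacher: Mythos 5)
Your two-block decomposition (algebraic equivalences (i)--(iv) together, then (iv)$\Leftrightarrow$(v) as the analytic hinge) matches the paper's organization: the paper proves (ii)$\Leftrightarrow$(v) as Theorem~\ref{thm:maximality of Delta} and then (i)$\Leftrightarrow$(ii)$\Leftrightarrow$(iii)$\Leftrightarrow$(iv) as Theorem~\ref{thm:Atiyah-1}. Your algebraic block is close to the paper's but routed slightly differently: the paper goes via linear representations of rational functions (Theorem~\ref{thm:linear representation}) and Gaussian elimination in the division ring $\C\plangle x_1,\dots,x_n\prangle$ (Lemma~\ref{lem:diagonalization}), whereas you propose passing through a full $\rho(P)\times\rho(P)$ submatrix (Theorem~\ref{thm:full minor}). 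Both are viable; your version needs the extra observation that an invertible $r\times r$ submatrix of $P(X)\in M_N(\A)$ forces $\rank(P(X))\ge r$ in the trace sense, which is easily supplied by compression.

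The genuine gap is in your dictionary for (iv)$\Leftrightarrow$(v). You propose to build the finite-rank tuple from ``a rank-one projection onto the defect space of $P(X)$ \dots coupled with the matrix coefficients $A_1,\dots,A_n$.'' If you only use a kernel vector $f\in\ker A(X)$ and form $T_k=\sum_{i,j}A^{(k)}_{ij}\langle\cdot,e_i\rangle f_j$, you obtain only the one-sided identity $T_0+\sum_k X_k T_k=0$; to upgrade this to the full commutator relation $\sum_k[T_k,X_k]=0$ one \emph{also} needs a vector $e\in\ker A(X)^\ast$, whose existence rests on the finiteness of $\tau$ (the paper's Lemma~\ref{lem:kernels}: $\ker(Y)$ and $\ker(Y^\ast)$ have the same $\tau$-dimension). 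This symmetric use of $\ker$ and $\ker^\ast$ is the real pivot of the construction and is absent from your outline. Relatedly, your sketch underestimates the reduction steps on both sides: (a) getting from ``some kernel vector'' to a kernel vector with $\C$-linearly independent components requires an induction over matrix size using the full-minor theorem (Proposition~\ref{prop:full minor-one column omitted}); and (b) in the converse direction, showing that a nontrivial finite-rank commutator tuple produces a \emph{non-full} linear matrix with simultaneous left/right kernel vectors uses the zero-block structure theorem for linear pencils (Theorem~\ref{thm:zero block}). Finally, your worry that an Atiyah-type integrality statement is needed ``so that one can pass freely between finite-rank operators on $\H$ and elements of $\M\ootimes\M^{\op}$'' is misplaced: the argument works directly with finite-rank operators and never invokes integrality; the strong Atiyah property is a consequence of the theorem, not a tool for its proof.
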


The second, third, and fourth characterization address matrices over polynomials in our operators; in (ii) we consider linear matrices, where the entries have degree at most 1. Again we compare the formal algebraic object $P\in M_{N}(\C\left\langle x_{1},\dots,x_{n}\right\rangle)$ with the corresponding matrix $P(X)\in M_N(\M)$ where we replace each $x_i$ by $X_i$. (ii) and (iii) require that the operator $P(X)$ is invertible, as an unbounded operator in $M_N(\A)$, if and only if $P$ is invertible as a matrix over the free field. The latter invertibility can be characterized in purely algebraic terms, by the fact that $P$ is full. This means that $P$ cannot be factorized into a product of strictly rectangular matrices over $\C\left\langle x_{1},\dots,x_{n}\right\rangle$. Also in the case that $P\in M_{N}(\C\left\langle x_{1},\dots,x_{n}\right\rangle)$ is not full, one can relate the algebraic with the analytic side, as in (iv). The inner rank $\rho(P)$ is, by definition, the smallest $M\leq N$ such that the $N\times N$ matrix can be factorized into a product of an $N\times M$ and an
$M\times N$ matrix. (iv) requires that, for all $P$, this algebraic rank is always equal to a corresponding analytic $\text{rank}(P(X))$, which is the trace $\Tr\otimes \tau$ of the orthogonal projection onto the image of $P(X)$.

The equivalence between (i) and (ii)/(iii)/(iv) is more or less a version of the linearization philosophy: this says that non-commutative rational functions can be realized in terms of matrices over polynomials, and hence properties for non-commutative rational functions in our operators $X_1,\dots,X_n$ should correspond to properties for matrices of polynomials in $X_1,\dots,X_n$. 

Whereas the first four items are thus equivalent versions of the statement that our operators should, on an algebraic level, be models of algebraic free indeterminates, the fourth property is of a very different type, and gives a description which property of $X_1,\dots,X_n$ ensures such a behaviour. The quantity $\Delta$ appearing there was introduced by Connes and Shlyakhtenko in their paper \cite{CS05} on the homology of von Neumann algebras. It is roughly the co-dimension of the space of 
tuples $(T_1,\dots,T_n)$ of finite rank operators on $L^2(\M,\tau)$ satisfying 
\begin{equation}\label{eq:finite-rank}
\sum_{k=1}^n[T_{k},X_{k}]=0.
\end{equation}
The requirement $\Delta(X_1,\dots,X_n)=n$ says that this space is trivial, i.e., that the only such finite rank operators are $T_1=\cdots=T_n=0$.

This should be seen as a property of the non-commutative distribution of the tuple $(X_1,\dots,X_n)$. It might not be clear how useful this characterization is, as it is not obvious how to find or exclude finite rank operators satisfying \eqref{eq:finite-rank}. In this context it is important to note that free probability theory provides a couple of tools and results which allow to address the maximality of $\Delta$ in many situations. The most straightforward situation is when the $X_i$ are selfadjoint operators. In this case,
Connes and Shlyakhtenko \cite{CS05} related $\Delta(X_1,\dots,X_n)$ with another quantity, about which we know much more. Namely, if $\delta^*(X_1,\dots,X_n)$ denotes a version the free entropy dimension of $X_1,\dots,X_n$ (which we will recall in Section \ref{sect:free-entropy-dimension}), then they showed that 
$$\delta^*(X_1,\dots,X_n)\leq \Delta(X_1,\dots,X_n)\leq n.$$
Hence the maximality of the free entropy dimension of $X_1,\dots,X_n$ implies also the maximality of $\Delta(X_1,\dots,X_n)$ and then all the other properties in Theorem \ref{thm:intro-main} also hold. Hence we get as a corollary that whenever $\delta^*(X_1,\dots,X_n)=n$ for selfadjoint operators then all rational functions as well as all full matrices over polynomials in $X_1,\dots,X_n$ are invertible as unbounded operators.
In the case where the rational function or the matrix is selfadjoint this implies then that the distribution of such non-constant functions or matrices cannot have atoms.

We have quite some tools in free probability to decide whether $\delta^*(X_1,\dots,X_n)=n$. This is, for example, the case if the $X_i$ are free and each of them has a non-atomic distribution. In particular, it is satisfied for free semicirculars. Note, however, that freeness between the $X_i$ is not necessary for having maximal free entropy dimension.

Let us also note that in relation with von Neumann algebra investigations it is still wide open whether maximality of $\delta^*(X_1,\dots,X_n)$ is equivalent to the fact that their generated von Neumann algebra is isomorphic to the free group factor $L(\FF_n)$. What we are providing with our Theorem \ref{thm:intro-main} is an algebraic version of such a statement, with $\Delta$ taking on the role of $\delta^*$. It is actually an open question whether those two quantities agree. We do in particular not know of an example where $\delta^*(X_1,\dots,X_n)<\Delta(X_1,\dots,X_n)=n$.

Since the free entropy dimension is only defined for selfadjoint operators it is of no help to address the question whether $\Delta(X_1,\dots,X_n)=n$ in more general situations. We took actually quite some efforts to prove our Theorem \ref{thm:intro-main} also without the assumption of selfadjointness - mainly with the classical result of Linnell in mind, where the operators are free Haar unitaries. Note that often one deals with non-selfadjoint situations by looking on the operators and their adjoints together; which can then be reduced to the selfadjoint case by considering the real and imaginary part of the operators. Hence if we have operators $A_1,\dots,A_n$ for which we can show that $\delta^*(A_1,\dots,A_n,A_1^*,\dots,A_n^*)=2n$, then, by our Theorem \ref{thm:intro-main}, the $A_i$ and their adjoints $A_i^*$ together give the free field in $2n$ variables, which means in particular that the $A_i$ alone give the free field in $n$ variables. Such an approach works for example for circular operators, but not for the free group case with its Haar unitary operators. In the latter case there are non-trivial algebraic relations between the operators and their adjoints, namely $U_iU_i^*=1$, and thus $U_1,\dots,U_n,U_1^*,\dots,U_n^*$ do not generate the free field in $2n$ generators. However, Linnell's result says that there are no non-trivial relations between the $U_i$ alone, not involving adjoints.
Whereas free entropy dimension arguments do not work in this case, there is another approach which, though usually also only considered for selfadjoint operators, has an adaptation to this more general situations. This relies on the existence of dual systems for our operators. Such dual systems were introduced by Voiculescu in \cite{Voi98}, and appeared also in \cite{CS05}. We adapt those ideas from the selfadjoint to more general situations and can thus show that $\Delta(U_1,\dots,U_n)=n$ for free Haar unitaries -- thus recovering Linnell's result as a special case of our general theory.

Note also that property (iii) of Theorem \ref{thm:intro-main} shows in particular that the analytic rank of $P(X)$, which apriori could be any real number between 0 and $N$, is actually an integer; since the algebraic rank is by definition an integer. This property, that $\text{rank}(P(X))$ is for all $P$ an integer, is the ``strong Atiyah property''. This was introduced by Shlyakhtenko and Skoufranis in \cite{SS15} as a generalization of such a property in the context of group algebras; the latter are related
 to the zero divisor conjecture, the Atiyah conjecture, or $l^2$-Betti numbers; for work in this context see, for example, \cite{GLSZ00,DLMSY03,PTh11}. So what our Theorem \ref{thm:intro-main} shows in particular is that operators whose division closure is the free field satisfy the strong Atiyah property. It is quite easy to see that the strong Atiyah property is actually weaker than the statements from Theorem \ref{thm:intro-main}. We will give a precise characterization, similar to Theorem \ref{thm:intro-main}, also for the strong Atiyah property. In addition to the division closure of $\C\left\langle X_{1},\dots,X_{n}\right\rangle$ this uses also its rational closure. This is, by definition, the set of all entries of inverses of matrices in $M_{N}(\C\left\langle X_{1},\dots,X_{n}\right\rangle )$.

\begin{theorem}
\label{thm:intro-main2}
Let $(\M,\tau)$ be a tracial $W^{\ast}$-probability space and $\A$ the $\ast$-algebra of affiliated unbounded operators.
For a given tuple $X=(X_{1},\dots,X_{n})$ of operators in $\M$, let
$\mathcal{R}$ be the rational closure of $\C\left\langle X_{1},\dots,X_{n}\right\rangle $. We denote the inner rank over $\mathcal{R}$ by $\rho_{\mathcal{R}}$. Then the following statements are equivalent.
\begin{enumerate}
\item $X$ has the strong Atiyah property, i.e., for any $N\in\N$ and any $P\in M_{N}(\C\left\langle x_{1},\dots,x_{n}\right\rangle )$ we have that $\rank(P(X))\in\N$.
\item The division closure of $\C\left\langle X_{1},\dots,X_{n}\right\rangle $ is a division ring.
\item The rational closure of $\C\left\langle X_{1},\dots,X_{n}\right\rangle $ is a division ring.
\item For any $N\in\N$ and any $P\in M_{N}(\C\left\langle x_{1},\dots,x_{n}\right\rangle )$ we have: if $P(X)$ is full over $\mathcal{R}$, then $P(X)\in M_{N}(\A)$ is invertible.
\item For any $N\in\N$ and any $P\in M_{N}(\C\left\langle x_{1},\dots,x_{n}\right\rangle )$ we have: $\rank(P(X))=\rho_{\mathcal{R}}(P(X))$.
\end{enumerate}
\end{theorem}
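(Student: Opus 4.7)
The plan is to establish the cycle $(i) \Rightarrow (iii) \Rightarrow (v) \Rightarrow (iv) \Rightarrow (iii)$ together with the separate equivalence $(ii) \Leftrightarrow (iii)$. The remaining links $(v) \Rightarrow (i)$ (integrality of the inner rank) and $(v) \Rightarrow (iv)$ (since $P(X)$ full over $\mathcal{R}$ gives $\rho_{\mathcal{R}}(P(X)) = N$, whence by $(v)$ $\rank(P(X)) = N$, whence invertibility of $P(X)$ in $M_N(\A)$) are immediate.

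The core construction, used for both $(i) \Rightarrow (iii)$ and $(iv) \Rightarrow (iii)$, is a Schur-complement bordering. Given $r \in \mathcal{R}$ with $r \neq 0$, write (after permutation) $r = (A(X)^{-1})_{NN}$ for some $A \in M_N(\C\langle x_1,\dots,x_n\rangle)$ with $A(X)$ invertible in $M_N(\A)$, and set
\[
\tilde P(x) = \begin{pmatrix} A(x) & e_N \\ e_N^T & 0 \end{pmatrix} \in M_{N+1}(\C\langle x_1,\dots,x_n\rangle).
\]
The factorization
\[
\tilde P(X) = \begin{pmatrix} I & 0 \\ e_N^T A(X)^{-1} & 1 \end{pmatrix} \begin{pmatrix} A(X) & 0 \\ 0 & -r \end{pmatrix} \begin{pmatrix} I & A(X)^{-1} e_N \\ 0 & 1 \end{pmatrix}
\]
has unit-triangular outer factors in $M_{N+1}(\mathcal{R})$, which are invertible over $M_{N+1}(\mathcal{R})$; consequently $\tilde P(X)$ is full over $\mathcal{R}$ iff $r \neq 0$, invertible in $M_{N+1}(\A)$ iff $r$ is invertible in $\A$, and in the invertible case its bottom-right entry equals $-r^{-1}$. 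Under $(i)$, $\rank(\tilde P(X)) = N + \rank(r) \in \N$ forces $\rank(r) \in \{0,1\}$, so $r \neq 0$ gives $r$ invertible; under $(iv)$, fullness of $\tilde P(X)$ directly yields its invertibility. In either case $\tilde P(X)^{-1} \in M_{N+1}(\mathcal{R})$ by the very definition of the rational closure, hence $r^{-1} \in \mathcal{R}$; so $\mathcal{R}$ is a division ring, which is $(iii)$.

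For $(iii) \Rightarrow (v)$: since $\mathcal{R}$ is a division ring, Gaussian elimination produces $P(X) = UDV$ with $U, V \in M_N(\mathcal{R})$ invertible and $D = \operatorname{diag}(1,\ldots,1,0,\ldots,0)$ having $k = \rho_{\mathcal{R}}(P(X))$ ones; invariance of the Murray--von Neumann rank under multiplication by invertibles in $M_N(\A)$, together with additivity on block-diagonal matrices, yields $\rank(P(X)) = \rank(D) = k$. For $(ii) \Leftrightarrow (iii)$: the division closure is always a subring of $\mathcal{R}$, and if $\mathcal{R}$ is a division ring then every non-zero element of the division closure is invertible in $\A$ with inverse back in the division closure by its defining property, so the division closure is itself a division ring; conversely, if the division closure is a division ring, Gaussian elimination performed inside it expresses every entry of the inverse of a polynomial-entry matrix as an element of the division closure, whence $\mathcal{R}$ coincides with the division closure.

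The main obstacle is the Schur-complement argument — specifically, showing that the algebraic conditions of fullness/invertibility of $\tilde P(X)$ over $\mathcal{R}$ faithfully track $r \neq 0$ and $r$ invertible in $\A$, and that the relevant row/column operations can be carried out with coefficients in $\mathcal{R}$ rather than in $\C\langle x_1,\dots,x_n\rangle$ itself. This is essentially an instance of Cohn's theory of rational closures and inner ranks adapted to the non-commutative operator-algebra setting, and it is what allows the purely analytic input (integrality of ranks) to be promoted to the algebraic statement that $\mathcal{R}$ is a division ring.
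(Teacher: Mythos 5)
Your proposal is correct and follows essentially the same route as the paper: the same Schur-complement bordering of a polynomial matrix $A$ with $A(X)$ invertible to convert elements of $\mathcal{R}$ into rank questions about larger polynomial matrices, the same Gaussian-elimination diagonalization over the division ring $\mathcal{R}$ for (iii)$\Rightarrow$(v), and the same division-closure-versus-rational-closure argument for (ii)$\Leftrightarrow$(iii). The only cosmetic differences are the orientation of the bordered matrix and the use of the standard basis vector $e_N$ in place of general scalar row/column vectors $u,v$; the cycle of implications you chose is also the one the paper uses (up to relabelling).
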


As in Theorem \ref{thm:intro-main} we have here, in (v), again an algebraic reason for the strong Atiyah property to hold. Namely, also in this more general setting the analytic rank of $P(X)$ is an integer because it has to be equal to an algebraic rank. In this case, the inner rank is not considered over the free field, but over the rational closure $\mathcal{R}$. This rational closure has to be a skew field, but it is not necessarily the free field. Let us also remark that in this case the rational closure and the division closure are the same. In contrast to Theorem \ref{thm:intro-main}, we do not have a good characterization for the situation in Theorem \ref{thm:intro-main2} in terms of the non-commutative distribution of $X_1,\dots,X_n$. It is an interesting question for further investigations to try to relate this to $\Delta$ or $\delta^*$ in cases where they are not maximal.

The paper is organized as follows.
In Section 2, we recall the basic concepts and results around the inner rank and full matrices over polynomials in noncommuting variables. Section 3 provides with Theorem \ref{thm:maximality of Delta} one of our main theorems, showing the equivalence between maximality of $\Delta$ and invertibility of full linear matrices. Section 4 provides the definition and basic facts about noncommutative rational functions and the rational closure of an algebra; in particular, the linearization idea is presented, which makes the connection between noncommutative rational functions and matrices over noncommutative polynomials. In Section 5, we recall basic facts about unbounded affiliated operators in the finite case; apply rational functions to our operators and characterize when the division or rational closure of our operators is a skew field and when it is the free skew field. Furthermore, we relate those results with regularity question for matrices in our operators. In Section 6, we provide two possibilities to check whether $\Delta$ is maximal; in the selfadjoint case this is implied by the maximality of the free entropy dimension; in general, the existence of a dual system is sufficient. The latter criterion is then used to recover in our setting the result of Linnell, that the generators of the free group generate the free field.

\section{Inner rank of matrices}\label{sec:inner rank}
In this section, we introduce the inner rank for matrices over a unital (not necessarily commutative) complex algebra $\A$; this is a generalization of the rank for matrices over $\C$ in the theory of linear algebra. We collect here some related properties and theorems that become useful in later sections. Moreover, in Section \ref{sec:rat fcts}, the analogy between the inner rank and the usual rank in linear algebra will be reinforced further by Theorem \ref{thm:free field} and Lemma \ref{lem:diagonalization}.

\subsection{Inner rank over unital algebra}
Let $\A$ be a general unital (not necessarily commutative) complex algebra.
With $M_{m,n}(\A)$ we denote the $m\times n$ matrices with entries from $\A$; and we put $M_n(\A):=M_{n,n}(\A)$.
\begin{definition}\label{def:inner-rank_full}
For any non-zero $A\in M_{m,n}(\A)$, the \emph{inner rank} of $A$ is defined as the least positive integer $r$ such that there are matrices $P\in M_{m,r}(\A)$, $Q\in M_{r,n}(\A)$ satisfying $A=PQ$. We denote this number by $\rho(A)$, and any such factorization with $r=\rho(A)$ is called a \emph{rank factorization}.

In particular, if $\rho(A)=\min\{m,n\}$, namely, if there is no such factorization with $r<\min\{m,n\}$, then $A$ is called \emph{full}. Additionally, if $A$ is a zero matrix, we define $\rho(A)=0$, hence a zero matrix is never full.
\end{definition}

It is not difficult to check that this inner rank becomes the usual rank of matrices when $\A=\C$. So the inner rank generalize consistently the notion of rank from $M_n(\C)$ to matrices over a general algebra. In the following, we list some properties for the inner rank, which depict more analogies between the inner rank and the usual rank in linear algebra. Before we begin the promised list, we introduce first stably finite algebras.

\begin{definition}\label{def:stably finite}
$\A$ is called \emph{stably finite} (or \emph{weakly finite}) if for any $n\in\mathbb{N}$, and all $A,B\in M_{n}(\A)$ the equation $AB=\1_{n}$ implies that also $BA=\1_{n}$ holds.
\end{definition}

\begin{proposition}
\label{prop:invertible minor}(See \cite[Proposition 5.4.6]{Coh06}) Suppose that $\A$ is stably finite. Let $A\in M_{m+n}(\A)$ be of the form
\[A=\begin{pmatrix}B & C\\D & E\end{pmatrix},\]
where $B\in M_{m}(\A)$, $C\in M_{m,n}(\A)$, $D\in M_{n,m}(\A)$ and $E\in M_{n}(\A)$. If $B$ is invertible, then $\rho(A)\geqslant m$, with equality if and only if $E=DB^{-1}C$.
\end{proposition}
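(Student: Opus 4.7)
The plan is to reduce the statement to the case of a block diagonal matrix by performing elementary row and column operations, and then to exploit stable finiteness to control factorizations of the diagonal blocks.

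First, since $B$ is invertible, I would introduce the elementary block matrices
$$L=\begin{pmatrix}\1_{m} & 0\\ -DB^{-1} & \1_{n}\end{pmatrix},\qquad R=\begin{pmatrix}\1_{m} & -B^{-1}C\\ 0 & \1_{n}\end{pmatrix},$$
which are invertible (their inverses are obtained by flipping the signs of the off-diagonal blocks). A direct computation gives
$$LAR=\begin{pmatrix} B & 0\\ 0 & S\end{pmatrix},\qquad\text{where }S:=E-DB^{-1}C.$$
Multiplication by invertible matrices on either side does not change the inner rank, since any factorization $A=PQ$ yields $LAR=(LP)(QR)$ of the same inner size, and conversely. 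Hence it suffices to prove that $\rho\bigl(\mathrm{diag}(B,S)\bigr)\geq m$, with equality iff $S=0$.

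For the inequality, suppose we have a factorization $\mathrm{diag}(B,S)=PQ$ with $P\in M_{m+n,r}(\A)$ and $Q\in M_{r,m+n}(\A)$ for some $r$. Decompose
$$P=\begin{pmatrix}P_{1}\\ P_{2}\end{pmatrix},\qquad Q=\begin{pmatrix}Q_{1} & Q_{2}\end{pmatrix}$$
so that $P_{1}Q_{1}=B$, $P_{1}Q_{2}=0$, $P_{2}Q_{1}=0$, and $P_{2}Q_{2}=S$. From $P_{1}Q_{1}=B$ invertible we get $P_{1}(Q_{1}B^{-1})=\1_{m}$. If $r<m$, pad $P_{1}$ with $m-r$ zero columns and $Q_{1}B^{-1}$ with $m-r$ zero rows to obtain square matrices $\tilde{P}_{1},\tilde{Q}_{1}\in M_{m}(\A)$ still satisfying $\tilde{P}_{1}\tilde{Q}_{1}=\1_{m}$. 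Stable finiteness then forces $\tilde{Q}_{1}\tilde{P}_{1}=\1_{m}$, which is impossible because the product has its last $m-r$ rows equal to zero. Therefore $r\geq m$, proving $\rho(A)\geq m$.

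For the equality case, if $S=0$ then $\mathrm{diag}(B,0)=\binom{B}{0}(\1_{m}\ 0)$ is a rank factorization of size $m$, so $\rho(A)=m$. Conversely, assume $\rho(\mathrm{diag}(B,S))=m$ and take a factorization with $r=m$ as above. Now $P_{1}(Q_{1}B^{-1})=\1_{m}$ holds in $M_{m}(\A)$, so by stable finiteness $P_{1}$ is a two-sided inverse of $Q_{1}B^{-1}$; in particular $P_{1}$ and $Q_{1}$ are invertible. Then $P_{1}Q_{2}=0$ implies $Q_{2}=0$, and $P_{2}Q_{1}=0$ implies $P_{2}=0$, so $S=P_{2}Q_{2}=0$. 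The only point where care is needed is the padding step in the stable-finiteness argument; everything else is formal block manipulation. I expect that to be the main (and only genuine) obstacle, essentially reproducing Cohn's proof in \cite{Coh06}.
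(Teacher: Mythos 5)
Your proof is correct. The paper does not supply its own proof of this proposition but simply cites Cohn's book, so there is nothing to compare against; your argument --- reduce to the Schur block-diagonal form $\operatorname{diag}(B,\,E-DB^{-1}C)$ via invertible elementary block matrices (which leaves the inner rank unchanged), then use the zero-padding trick together with stable finiteness to get $\rho(A)\geq m$ and to characterize equality --- is complete and is, as you yourself note, essentially the standard argument found in Cohn.
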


Note that if $\A$ is particularly a division ring, then $\A$ is stably finite. Moreover, in such a case, if additionally $n=1$, then Proposition \ref{prop:invertible minor} becomes the well-known Schur's lemma. In the following lemma, we state Schur's lemma in full generality, which will be used for proving Theorem \ref{thm:Atiyah-2}.

\begin{lemma}
\label{lem:Schur complement}Suppose that $\A$ is a unital algebra. Let $A\in M_{m+n}(\A)$ be of the form
\[A=\begin{pmatrix}B & C\\D & E\end{pmatrix},\]
where $B\in M_{m}(\A)$, $C\in M_{m,n}(\A)$, $D\in M_{n,m}(\A)$ and $E\in M_{n}(\A)$. If $B$ is invertible, then $A$ is invertible in $M_{m+n}(\A)$ if and only if the Schur complement $E-DB^{-1}C$ is invertible in $M_{n}(\A)$.
\end{lemma}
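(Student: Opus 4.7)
The plan is to reduce the problem to the well-known fact that a block-triangular matrix with invertible diagonal blocks is invertible, and conversely, by exhibiting an explicit block $LDU$ factorization of $A$. Specifically, I would write
\[
\begin{pmatrix} B & C \\ D & E \end{pmatrix}
= \begin{pmatrix} \1_{m} & 0 \\ DB^{-1} & \1_{n} \end{pmatrix}
\begin{pmatrix} B & 0 \\ 0 & E - DB^{-1}C \end{pmatrix}
\begin{pmatrix} \1_{m} & B^{-1}C \\ 0 & \1_{n} \end{pmatrix},
\]
and check the identity by direct block multiplication; this computation goes through in any unital (not necessarily commutative) algebra, since the entries appearing are formed only from $B^{\pm 1}$, $C$, $D$, $E$ without rearranging factors.

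Next I would observe that the two outer unitriangular factors are always invertible in $M_{m+n}(\A)$, with explicit inverses
\[
\begin{pmatrix} \1_{m} & 0 \\ -DB^{-1} & \1_{n} \end{pmatrix}
\quad\text{and}\quad
\begin{pmatrix} \1_{m} & -B^{-1}C \\ 0 & \1_{n} \end{pmatrix},
\]
which again can be verified by direct block multiplication. Hence $A$ is invertible in $M_{m+n}(\A)$ if and only if the middle block-diagonal matrix $\operatorname{diag}(B,\, E-DB^{-1}C)$ is invertible in $M_{m+n}(\A)$.

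Finally, a block-diagonal matrix $\operatorname{diag}(X,Y)$ is invertible in $M_{m+n}(\A)$ if and only if both $X \in M_m(\A)$ and $Y \in M_n(\A)$ are invertible; this is immediate from the identity $\operatorname{diag}(X,Y)\operatorname{diag}(X',Y') = \operatorname{diag}(XX',YY')$ and the uniqueness of inverses. Applying this with $X = B$ (invertible by hypothesis) and $Y = E - DB^{-1}C$, invertibility of $A$ is equivalent to invertibility of the Schur complement $E - DB^{-1}C$, as claimed.

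There is no real obstacle here; the only point requiring care is that $\A$ is not assumed commutative, so one must keep the order of factors fixed throughout and not invoke any determinant-style reasoning. Everything reduces to the two block multiplications above, both of which respect the noncommutative order of multiplication.
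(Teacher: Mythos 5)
Your proof is correct, and it is the standard one: the block $LDU$ (Schur complement) factorization
\[
A=\begin{pmatrix} \1_{m} & 0 \\ DB^{-1} & \1_{n} \end{pmatrix}
\begin{pmatrix} B & 0 \\ 0 & E - DB^{-1}C \end{pmatrix}
\begin{pmatrix} \1_{m} & B^{-1}C \\ 0 & \1_{n} \end{pmatrix},
\]
together with the observation that the outer factors are always invertible, reduces the claim to the block-diagonal case. Note that the paper states this lemma without proof (it is offered as the ``full generality'' form of Schur's lemma and cited as standard), so there is no competing argument in the text to compare against.

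One small point of precision: you justify the block-diagonal step by appealing to the identity $\operatorname{diag}(X,Y)\operatorname{diag}(X',Y')=\operatorname{diag}(XX',YY')$ and ``uniqueness of inverses,'' but this identity by itself only gives the easy direction (both blocks invertible $\Rightarrow$ block-diagonal invertible). For the converse, which is the direction you actually need when passing from invertibility of $A$ to invertibility of the Schur complement, you should instead write the putative inverse in block form $\begin{pmatrix} P & Q \\ R & S \end{pmatrix}$ and read off from $\operatorname{diag}(X,Y)\begin{pmatrix} P & Q \\ R & S \end{pmatrix}=\1$ and $\begin{pmatrix} P & Q \\ R & S \end{pmatrix}\operatorname{diag}(X,Y)=\1$ that $XP=\1_m=PX$ and $YS=\1_n=SY$, so $X$ and $Y$ are each two-sided invertible. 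This is a one-line fix and does not affect the soundness of your overall argument.
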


Back to the inner rank, we have the important following theorem. It is an analogue of the theorem that claims that a matrix of rank $r$ in $M_{n}(\C)$ always has a $r\times r$ non-singular submatrix.

\begin{theorem}
\label{thm:full minor}(See \cite[Theorem 5.4.9]{Coh06}) Suppose that the set of all square full matrices over $\A$ is closed under products and diagonal sums. Then for any $A\in M_{m,n}(\A)$, there exists a square submatrix of $A$ which is a full matrix over $\A$ of dimension $\rho(A)$. Moreover, $\rho(A)$ is the maximal dimension for such submatrices.
\end{theorem}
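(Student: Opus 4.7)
The plan is to prove the two assertions separately: the existence of an $r\times r$ full submatrix with $r=\rho(A)$, and the maximality of $\rho(A)$ among the dimensions of full square submatrices of $A$.

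I would handle the maximality first, as it is the easier direction. Starting from a rank factorization $A=PQ$ with $P\in M_{m,r}(\A)$ and $Q\in M_{r,n}(\A)$, any $s\times s$ submatrix $B$ of $A$ can be written $B=EAF$ for appropriate $0/1$-selection matrices $E\in M_{s,m}(\C)$, $F\in M_{n,s}(\C)$. Then $B=(EP)(QF)$ is a factorization through dimension $r$, whence $\rho(B)\leq r$; in particular $B$ cannot be full when $s>r$.

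For the existence part, I would reduce to the following auxiliary claim (together with its evident transpose for column selection): \emph{if $C\in M_{k,r}(\A)$ with $k\geq r$ satisfies $\rho(C)=r$, then some choice of $r$ rows of $C$ forms a full $r\times r$ submatrix.} Granting this, fix a rank factorization $A=PQ$; minimality of $r$ forces $\rho(P)=\rho(Q)=r$, since any strictly smaller factorization of $P$ or of $Q$ would induce one of $A$ through a smaller dimension. Applying the auxiliary claim to $P$ and to $Q^{\mathrm{T}}$ produces index sets $I\subset\{1,\dots,m\}$ and $J\subset\{1,\dots,n\}$ of size $r$ for which $P_I$ and $Q_J$ are full $r\times r$ matrices. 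The $(I,J)$-submatrix of $A$ then equals $P_I Q_J$, a product of two full square matrices, which is again full by the closure hypothesis of the theorem.

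The genuine obstacle is the auxiliary claim itself. My plan is induction on $k$: the base case $k=r$ is immediate, since $C$ is then itself a full $r\times r$ matrix. For $k>r$ one wants to delete a row of $C$ without dropping the inner rank below $r$ and then invoke the inductive hypothesis on a $(k-1)\times r$ matrix. The difficulty lies in ruling out the degenerate scenario in which every row of $C$ is ``essential,'' meaning that the deletion of any single row drops the inner rank. Excluding this requires genuine use of the closure assumption together with Cohn's structural machinery on inner rank, notably the Schur-complement identity of Proposition \ref{prop:invertible minor}: one has to combine the local (sub-rank) factorizations of the $(k-1)\times r$ row-submatrices witnessing their non-fullness into a single global factorization of $C$ through dimension $<r$, contradicting $\rho(C)=r$. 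Carrying out this patching rigorously is the nontrivial combinatorial core of \cite[Theorem 5.4.9]{Coh06}, and for a complete write-up I would refer to that source rather than reproduce the bookkeeping here.
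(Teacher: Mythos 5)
The paper offers no proof of this result; it is stated verbatim as a citation to \cite[Theorem 5.4.9]{Coh06}, so there is no ``paper's approach'' against which to compare yours. Judged on its own terms, your proposal is a faithful, partially complete outline. The maximality half is correct and cleanly self-contained: writing a submatrix as $B=EAF$ with scalar $0/1$-selection matrices (which lie in $M_{\bullet}(\A)$ because $\A$ is unital over $\C$) and passing the rank factorization $A=PQ$ through gives $\rho(B)\leq\rho(A)$ at once. The observation that minimality of $r$ forces $\rho(P)=\rho(Q)=r$, and that the $(I,J)$-submatrix of $A$ is literally $P_I Q_J$ and hence full by the closure-under-products hypothesis, is also correct and is exactly where that hypothesis is used explicitly.

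The genuine gap is the one you yourself flag: the auxiliary claim is not proved, and it cannot be waved away as bookkeeping, because it \emph{is} the theorem's existence assertion restricted to the special case of a full rectangular matrix $C\in M_{k,r}(\A)$ with $\rho(C)=r$. In other words, your reduction isolates the hard kernel rather than dispensing with it, and the closure hypothesis is needed again, implicitly, inside that kernel. Your inductive sketch for the auxiliary claim identifies the right difficulty (ruling out the scenario where deleting any single row drops the inner rank), but it is precisely the step you omit that carries the entire weight of Cohn's argument; stating that Proposition \ref{prop:invertible minor} plus some ``patching'' suffices is a plausible guess but not a proof. One further small imprecision: invoking $Q^{\mathrm{T}}$ is delicate over a noncommutative algebra, since $(AB)^{\mathrm{T}}\neq B^{\mathrm{T}}A^{\mathrm{T}}$ and hence inner rank is not obviously transpose-invariant; it is cleaner to state the column-selection version of the auxiliary claim directly and observe that its proof is symmetric to the row version, which is in fact what your parenthetical ``evident transpose for column selection'' already suggests. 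In summary: the skeleton is right, the easy half is done, and the hard half is honestly deferred to \cite{Coh06} rather than established.
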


In the remaining part of this section, we set $\A=\C\left\langle x_{1},\dots,x_{d}\right\rangle$, the algebra of noncommutative polynomials in (formal) non-commuting variables $x_{1},\dots,x_{d}$. Then the requirements in Theorem \ref{thm:full minor} can be verified (since $\C\left\langle x_{1},\dots,x_{d}\right\rangle$ is a Sylvester domain, see \cite[Section 5.5]{Coh06} for details).
Moreover, as a corollary we have the following proposition, which is Lemma 4 in Section 4 in \cite{CR94}.

\begin{proposition}
\label{prop:full minor-one column omitted}Let $A\in M_{n}(\C\left\langle x_{1},\dots,x_{d}\right\rangle)$ be given in the form $A=(B \,\, b)$, where $b$ is the last column of $A$ and $B$ is the remaining block. Assume that $A$ is full, then we can choose $n-1$ rows of $B$ to form a full $(n-1)\times(n-1)$ matrix.
\end{proposition}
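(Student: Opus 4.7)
The plan is to reduce the statement to Theorem \ref{thm:full minor}, which asserts the existence of a full square submatrix of dimension $\rho(B)$ inside the non-square matrix $B \in M_{n,n-1}(\C\langle x_1,\dots,x_d\rangle)$. Since a square submatrix of maximal possible dimension $n-1$ in an $n \times (n-1)$ matrix is obtained precisely by deleting one row, the proposition will follow once one shows that $B$ itself is full, i.e., $\rho(B) = n-1$.

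To establish $\rho(B) = n-1$, I would argue by contrapositive: suppose $B$ is not full, so that $\rho(B) \leq n-2$. Then there exist matrices $P \in M_{n,n-2}(\C\langle x_1,\dots,x_d\rangle)$ and $Q \in M_{n-2,n-1}(\C\langle x_1,\dots,x_d\rangle)$ with $B = PQ$. The key observation is then the block factorization
\[
A = (B \;\; b) = (PQ \;\; b) = (P \;\; b) \begin{pmatrix} Q & 0 \\ 0 & 1 \end{pmatrix},
\]
where $(P \;\; b) \in M_{n,n-1}(\C\langle x_1,\dots,x_d\rangle)$ and the right-hand factor lies in $M_{n-1,n}(\C\langle x_1,\dots,x_d\rangle)$. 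This exhibits $A$ as a product through inner dimension $n-1$, so $\rho(A) \leq n-1 < n$, contradicting the assumption that $A$ is full. Hence $\rho(B) = n-1$.

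Finally, to invoke Theorem \ref{thm:full minor} for $B$, one needs the set of square full matrices over $\C\langle x_1,\dots,x_d\rangle$ to be closed under products and diagonal sums. As already noted just after Theorem \ref{thm:full minor}, this holds because $\C\langle x_1,\dots,x_d\rangle$ is a Sylvester domain. Applying the theorem produces a full $\rho(B) \times \rho(B) = (n-1) \times (n-1)$ submatrix of $B$, which must consist of $n-1$ rows of $B$ (all $n-1$ columns of $B$ being used), yielding the desired conclusion.

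I expect the only real subtlety to be the dimensional bookkeeping in the block factorization; everything else is a direct application of the preceding results. The block-matrix trick used to defeat fullness is standard, but it is the heart of the argument and deserves careful verification.
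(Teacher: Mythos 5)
Your proposal is correct and follows the same route as the paper: show that $B$ is full, then apply Theorem~\ref{thm:full minor}. The paper leaves the fullness of $B$ as an easy remark (``not difficult to see from the definition''), and your block factorization $A=(P\;\;b)\left(\begin{smallmatrix}Q&0\\0&1\end{smallmatrix}\right)$ is a clean way to make that step explicit.
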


To see this, note that $B\in M_{n,n-1}(\C\left\langle x_{1},\dots,x_{d}\right\rangle)$ is also a full matrix, which is not difficult to see from the definition; then applying Theorem \ref{thm:full minor} to $B$ yields the above proposition.

\subsection{Linear matrices}

Now, consider a matrix of form
\[A=\begin{pmatrix}P & \0\\Q & R\end{pmatrix}\in M_{n}(\C\left\langle x_{1},\dots,x_{d}\right\rangle),\]
which has a zero block of size $r\times s$ and blocks $P$, $Q$, $R$ of sizes $r\times (n-s)$, $(n-r)\times (n-s)$, $(n-r)\times s$, respectively. Then we have the factorization
\[A=\begin{pmatrix}P & \0\\Q & R\end{pmatrix}=\begin{pmatrix}P & \0\\\0 & \1_{n-r}\end{pmatrix}\begin{pmatrix}\1_{n-s} & \0\\Q & R\end{pmatrix}.\]
So $A$ has been expressed as a product of an $n\times(2n-r-s)$ matrix and an $(2n-r-s)\times n$ matrix; this allows us to conclude that $\rho(A)\leq 2n-r-s$. Therefore, if the size of the zero block of $A$ satisfies $r+s>n$, then we have $\rho(A)<n$, which means that $A$ is not full. Such matrices are called hollow matrices.

\begin{definition}
\label{def:hollow}A matrix in $M_{n}(\C\left\langle x_{1},\dots,x_{d}\right\rangle )$ is called \emph{hollow} if it has an $r\times s$ block of zeros with $r+s>n$.
\end{definition}

In general, a non-full $A\in M_{n}(\C\left\langle x_{1},\dots,x_{d}\right\rangle)$ may not have any zero blocks or submatrices. However, we will be mostly interested in special matrices for which we can say more.

\begin{definition}
A matrix $A\in M_{n}(\C\left\langle x_{1},\dots,x_{d}\right\rangle )$ is called \emph{linear} if it can be written in the form $A=A_{0}+A_{1}x_{1}+\cdots+A_{d}x_{d}$, where $A_{0},A_{1}\dots,A_{d}$ are $n\times n$ matrices over $\C$. Note that we allow also a constant term in a general linear matrix. We call the non-constant part $A-A_{0}=A_{1}x_{1}+\cdots+A_{d}x_{d}$ the \emph{homogeneous part of $A$}.
\end{definition}

For linear matrices we have the following theorem to reveal their zero block structure. That is, for a linear matrix $A$, it is always possible to bring $A$ into a form with some possible zero block. From this block structure, we can see that $A$ is not full if and only if $A$ is hollow, up to multiplying by scalar-valued invertible matrices.

\begin{theorem}
\label{thm:zero block}
Let $A$ be a linear matrix in $M_{n}(\C\left\langle x_{1},\dots,x_{d}\right\rangle)$. Then there exist invertible matrices $U,V\in M_{n}\left(\C\right)$ and an integer $s\in[0,\rho(A)]$ such that
\begin{equation}\label{eq:zero block}
UAV=\begin{pmatrix}B & \0\\
C_{1} & C_{2}
\end{pmatrix}
\end{equation}
where $B\in M_{n-s,\rho(A)-s}(\C\left\langle x_{1},\dots,x_{d}\right\rangle)$ has inner rank $\rho(B)=\rho(A)-s$.
\end{theorem}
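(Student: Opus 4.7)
The plan is to produce the block form by exploiting the interplay between the linearity of $A$ and the inner-rank factorization. When $\rho(A)=n$, i.e.\ $A$ is full, set $s=0$, $U=V=I_{n}$, $B:=A$, and the ``zero block'' is vacuous (has zero columns); the statement then holds trivially. So from now on set $r:=\rho(A)<n$ and fix a rank factorization $A=PQ$ with $P\in M_{n,r}(\C\langle x_{1},\dots,x_{d}\rangle)$ and $Q\in M_{r,n}(\C\langle x_{1},\dots,x_{d}\rangle)$.

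The key step is a normalization of the factorization using the grading of the free algebra by total degree together with the hypothesis $\deg A\le 1$. I would first argue, by induction on the total degree $\deg P+\deg Q$ over all rank factorizations of $A$, that one may assume $\deg P\le 1$ and $\deg Q\le 1$: the top-degree contribution to $PQ$ equals the product of the top-degree pieces of $P$ and $Q$, and since $\deg A\le 1$ this product must vanish identically in the graded algebra, which allows replacing $P,Q$ by strictly lower-degree equivalent factors. Writing then $P=P_{0}+P_{1}$ and $Q=Q_{0}+Q_{1}$ into their constant and homogeneous degree-$1$ parts, the identity $A=PQ$ combined with $\deg A\le 1$ forces the central relation $P_{1}Q_{1}=0$.

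Now I would apply linear-algebraic normal-form arguments to the quadruple $(P_{0},P_{1},Q_{0},Q_{1})$ over $\C$: scalar invertibles $U,V\in\mathrm{GL}_{n}(\C)$ act on the outside, while an intermediate $T\in\mathrm{GL}_{r}(\C)$ acts on the factorization via $P\mapsto PT,\ Q\mapsto T^{-1}Q$. By choosing $T$ to bring the constant data $(P_{0},Q_{0})$ simultaneously into reduced echelon form compatible with the constraint $P_{1}Q_{1}=0$, and then choosing $U,V$ as scalar row/column operations exhibiting the resulting structure, one isolates an integer $s\in[0,r]$ and a block partition in which the $(n-s)\times(n-r+s)$ upper-right block of $UAV=(UPT)(T^{-1}QV)$ vanishes, yielding
\begin{equation*}
UAV=\begin{pmatrix}B & \mathbf{0}\\ C_{1} & C_{2}\end{pmatrix},\qquad B\in M_{n-s,\,r-s}(\C\langle x_{1},\dots,x_{d}\rangle).
\end{equation*}
The identity $\rho(B)=r-s$ is then automatic: the factorization $\begin{pmatrix}B & \mathbf{0}\\ C_{1} & C_{2}\end{pmatrix}=\begin{pmatrix}B & \mathbf{0}\\ \mathbf{0} & I_{s}\end{pmatrix}\begin{pmatrix}I_{r-s} & \mathbf{0}\\ C_{1} & C_{2}\end{pmatrix}$ exhibits $UAV$ as a product through intermediate dimension $(r-s)+s=r$, so $\rho(B)<r-s$ would produce a rank factorization of $A$ of inner dimension $<r$, contradicting $\rho(A)=r$.

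The main obstacle is the normal-form step in the third paragraph: coordinating scalar invertibles $U,V,T$ so that the rank deficiency of $A$ is simultaneously exposed as a zero block, making essential use of $P_{1}Q_{1}=0$. This is fundamentally a statement about the tuple of coefficient matrices $(A_{0},A_{1},\dots,A_{d})\in M_{n}(\C)^{d+1}$ of the expansion $A=A_{0}+\sum_{i}A_{i}x_{i}$ being simultaneously reducible to a common block-upper-triangular pattern of the specified shape, and carrying it out rigorously requires careful bookkeeping of how the $\C$-linear data of the constant and degree-$1$ pieces interact under the constraint $P_{1}Q_{1}=0$. It is precisely here that the linearity hypothesis is indispensable: for matrices of degree $\ge 2$, relations would span many more homogeneous levels, and scalar transformations would no longer suffice to expose the rank deficiency — polynomial-valued invertible matrices would be needed instead.
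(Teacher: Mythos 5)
The paper does not give its own proof of this theorem; it defers to Cohn (\cite[Corollary 6.3.6]{Coh95} or \cite[Theorem 5.8.8]{Coh06}), adding only the remark that the extra claim $\rho(B)=\rho(A)-s$ is extractable from those proofs. So there is no in-paper argument to compare against directly; I will therefore assess your sketch on its own merits, keeping in mind that Cohn's proof goes through the structure theory of free ideal rings / Sylvester domains and the weak algorithm for $\C\langle x_{1},\dots,x_{d}\rangle$.

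Your last paragraph is fine: once the block form with the zero block is achieved, the factorization $\bigl(\begin{smallmatrix}B & \0\\ C_1 & C_2\end{smallmatrix}\bigr)=\bigl(\begin{smallmatrix}B & \0\\ \0 & \1_s\end{smallmatrix}\bigr)\bigl(\begin{smallmatrix}\1_{r-s} & \0\\ C_1 & C_2\end{smallmatrix}\bigr)$ together with a putative rank factorization of $B$ through dimension $<r-s$ would give a factorization of $A$ through dimension $<r$, contradicting $\rho(A)=r$; and $\rho(B)\le r-s$ is automatic from the shape of $B$. That part is correct.

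The problems are in the two preceding steps, and they are genuine. First, the ``degree-reduction'' paragraph does not actually work as written. From $\deg(PQ)\le 1$ you correctly deduce that the top homogeneous pieces satisfy $P_{\mathrm{top}}Q_{\mathrm{top}}=0$; but this is a nontrivial linear dependence relation between matrices over $\C\langle x_{1},\dots,x_{d}\rangle$, and it does not automatically let you replace $P,Q$ by factors of strictly smaller degree while keeping the intermediate dimension $r$ fixed. Trivializing such a relation is itself a deep fact about the free algebra (it is a (semi)fir, hence relations are trivializable), and the trivializing transformation $T$ lives in $\mathrm{GL}_r(\C\langle x_{1},\dots,x_{d}\rangle)$, not in $\mathrm{GL}_r(\C)$; after applying $T$, the factors $PT$ and $T^{-1}Q$ may well have \emph{larger} degree, so the induction on $\deg P+\deg Q$ has no monotonicity guarantee. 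It is true that linear matrices over free algebras admit ``linear'' rank factorizations in a suitable sense, but establishing that is essentially as hard as the theorem itself and cannot be obtained by the degree-counting argument you give.

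Second, even granting the reduction to $\deg P,\deg Q\le 1$ and $P_1Q_1=0$, the normal-form step — choosing scalar $U,V\in\mathrm{GL}_n(\C)$ and $T\in\mathrm{GL}_r(\C)$ so that the transformed matrix actually exhibits a zero block of the required size — is not carried out. You explicitly call it ``the main obstacle'' and say it ``requires careful bookkeeping,'' but this is precisely the content of the theorem; without it, there is no proof. In Cohn's treatment this is where the Sylvester-domain / weak-algorithm machinery is doing the real work, and I do not believe a purely elementary simultaneous-reduction argument on the coefficient tuple $(A_0,\dots,A_d)$ under the single constraint $P_1Q_1=0$ produces the hollow form without that machinery. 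As it stands, the proposal is an outline with the two essential lemmas missing.
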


Actually, by the above theorem, we see that the zero block has size $(n-s)\times(n-\rho(A)+s)$. Therefore, if $A$ is not full, i.e., $\rho(A)<n$, then the right hand side of \eqref{eq:zero block} is hollow since $2n-\rho(A)>n$. For a proof of the above theorem, we refer to \cite[Corollary 6.3.6]{Coh95} or \cite[Theorem 5.8.8]{Coh06}. The statements therein do not address the inner rank of the block $B$ of $UAV$, but from the corresponding proofs it is easy to extract this additional statement.

We finish this subsection by mentioning another interesting criterion for the fullness of linear matrices that was given in \cite{GGOW16}.






\begin{proposition}\label{prop:rank-decreasing}
Consider a linear matrix $A = A_{1}x_{1}+\cdots+A_{d}x_{d}$ in $M_{n}(\C\left\langle x_{1},\dots,x_{d}\right\rangle )$ with zero constant part. Then $A$ is full if and only if the associated \emph{quantum operator}
$$\cL:\ M_n(\C) \to M_n(\C), \qquad B \mapsto \sum^d_{i=1} A_i B A_i^\ast$$
is \emph{nowhere rank-decreasing}, i.e., there is no positive semidefinite matrix $B \in M_n(\C)$ for which $\rank(\cL(B)) < \rank(B)$ holds.
\end{proposition}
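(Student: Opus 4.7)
The plan is to introduce the intermediate notion of a \emph{shrunk subspace} for the tuple $(A_1,\dots,A_d)$, meaning a subspace $W\subseteq\C^n$ with $\dim\bigl(\sum_{i=1}^d A_iW\bigr)<\dim W$, and to show separately that (a) $A$ fails to be full if and only if a shrunk subspace exists, and (b) $\cL$ is rank-decreasing if and only if a shrunk subspace exists. Combining these two equivalences yields the proposition.

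For (a), I would invoke Theorem \ref{thm:zero block}: $A$ is non-full iff there exist invertible scalar matrices $U,V\in M_n(\C)$ making $UAV$ hollow, i.e., containing (after an obvious scalar permutation that may be absorbed into $U,V$) a top-right zero block of size $r\times s$ with $r+s>n$. Since $A$ is linear with vanishing constant term and $U,V$ have scalar entries, this zero block is inherited entrywise by each $UA_iV$. Letting $W$ be the span of the last $s$ columns of $V$, the common zero block forces each $A_iW$ into a fixed $(n-r)$-dimensional subspace, so $\dim(\sum_i A_iW)\le n-r<s=\dim W$. Conversely, given a shrunk $W$ of dimension $s$ with $T:=\sum_i A_iW$ of dimension $t<s$, I would choose $V$ invertible whose last $s$ columns span $W$, and $U$ invertible whose first $n-t$ rows form a basis of the annihilator of $T$. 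Then $UAV$ has a zero block of size $(n-t)\times s$ with $(n-t)+s>n$, hence is hollow and not full, so $\rho(A)=\rho(UAV)<n$.

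For (b), write $W:=\mathrm{range}(B)=\mathrm{range}(B^{1/2})$ for a PSD matrix $B$ of rank $s$, and concatenate the columns of $A_1B^{1/2},\dots,A_dB^{1/2}$ into a single $n\times(nd)$ matrix $M$. Then $\cL(B)=MM^\ast$, while the column span of $M$ equals $\sum_i A_iW$, so $\rank(\cL(B))=\rank(M)=\dim(\sum_i A_iW)$. This is strictly less than $s=\rank(B)$ precisely when $W$ is shrunk. In one direction this reads off a shrunk subspace from a PSD witness; in the other, the orthogonal projection onto a given shrunk $W$ serves as a PSD witness. The only mildly technical point is the construction of $U,V$ from a shrunk subspace in step (a); everything else reduces to linear algebra on top of the structural Theorem \ref{thm:zero block} already cited in the text.
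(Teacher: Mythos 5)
Your argument is correct. The paper itself gives no proof of Proposition~\ref{prop:rank-decreasing} --- it is attributed to \cite{GGOW16} --- so there is no in-paper proof to compare against, but your route via ``shrunk subspaces'' is exactly the standard one from that reference. The two halves are sound: for (a), Theorem~\ref{thm:zero block} (together with the fact that, for a linear $A$ with zero constant term and scalar $U,V$, a zero entry of $UAV$ is equivalent to the vanishing of the corresponding entry of every $UA_iV$) gives the equivalence between non-fullness and the existence of a shrunk $W$; note the theorem already places the zero block in the top-right corner, so the ``obvious permutation'' remark is superfluous. The converse construction of a hollow $UAV$ from a shrunk $W$ (last $s$ columns of $V$ span $W$, first $n-t$ rows of $U$ span the annihilator of $\sum_i A_iW$) is correct and produces an $(n-t)\times s$ zero block with $(n-t)+s>n$. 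For (b), the identity $\cL(B)=MM^\ast$ with $M=\bigl(A_1B^{1/2}\ \cdots\ A_dB^{1/2}\bigr)$ gives $\rank(\cL(B))=\dim\bigl(\sum_i A_i\,\mathrm{ran}(B)\bigr)$, and taking $B=P_W$ handles the converse; this cleanly reduces the rank-decreasing condition to the existence of a shrunk subspace. Combining (a) and (b) gives the proposition.
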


This connects fullness very nicely with concepts that are used, for instance, in \cite{AjEK18,AEK18}; we will say more about this in Section \ref{subsec:regularity_linear_matrices}.

\subsection{Central eigenvalues}

Recall that for a matrix $A\in M_n(\C)$, its spectrum $\sigma(A)$ is defined as
\[
\big\{\lambda \in \C \bigm| \text{$A - \lambda \1_n$ is not invertible in }M_n(\C)\big\}.
\]
That is,
\[
\sigma(A)= \big\{\lambda \in \C \bigm| \rho_\C(A - \lambda \1_n)<n \big\},
\]
where $\rho_\C$ denotes the (inner) rank of $A$ over $\C$. This is simply because the invertibility of $A$ is equivalent to its rank being maximal.

Now, we consider a unital complex algebra $\A$. In Lemma \ref{lem:diagonalization} we will see that the fullness, i.e., the maximality of inner rank over $\A$ is indeed equivalent to the invertibility if $\A$ is additionally a division ring. So if we consider fullness as invertibility over $\A$, it is natural to define a spectrum as the following.

\begin{definition}\label{def:central eigenvalue}
Let $\A$ be a unital complex algebra. For each square matrix $A$ over $\A$, say $A\in M_n(\A)$ for some $n\in\N$, we define
\[
\sigma^\full_\A(A) := \big\{\lambda \in \C \bigm| \rho_\A(A - \lambda \1_n)<n \big\},
\]
where $\rho_\A$ denotes the inner rank over $\A$ and $\1_n$ stands for the unit element in $M_n(\A)$. The numbers $\lambda\in\sigma^\full_\A(A)$ are called \emph{central eigenvalues} of $A$.
\end{definition}

This concept of central eigenvalues was introduced in \cite[Section 8.4]{Coh85}; we thank Konrad Schrempf for bringing this reference to our attention.

We specialize our considerations now to the relevant case $\A = \C\langle x_1,\dots,x_d\rangle$ and will in the following write $\sigma^\full(A)$ for $\sigma^\full_{\C\langle x_1,\dots,x_d\rangle}(A)$ for a square matrix $A$ over $\C\langle x_1,\dots,x_d\rangle$. 

\begin{proposition}\label{prop:finite spectrum}
(See \cite[Proposition 8.4.1]{Coh85}) Let $A$ be a matrix in $M_n(\C\langle x_1,\dots,x_d \rangle)$ for some $n\in\N$. Then $A$ has at most $n$ central eigenvalues.
\end{proposition}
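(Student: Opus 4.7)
The plan is to reduce the statement to a linear-algebra fact about scalar matrices by means of the evaluation homomorphism at the origin. The key observation is that the inner rank cannot increase under any unital $\C$-algebra homomorphism. Indeed, if $\phi\colon\A\to\B$ is such a homomorphism and $A=PQ\in M_n(\A)$ is any factorization with $P\in M_{n,r}(\A)$ and $Q\in M_{r,n}(\A)$, then $\phi(A)=\phi(P)\phi(Q)$ is a factorization through the same intermediate dimension $r$, whence $\rho_\B(\phi(A))\le\rho_\A(A)$. This is immediate from Definition \ref{def:inner-rank_full}.

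Specializing this to the evaluation $\phi_0\colon\C\langle x_1,\dots,x_d\rangle\to\C$ that sends each $x_i$ to $0$, and writing $A_0:=\phi_0(A)\in M_n(\C)$ for the matrix of constant terms of $A$, the defining inequality $\rho(A-\lambda\1_n)<n$ for a central eigenvalue $\lambda\in\sigma^\full(A)$ propagates to $\rho_\C(A_0-\lambda\1_n)<n$. Over the field $\C$, the inner rank $\rho_\C$ on $M_n(\C)$ coincides with the usual rank from linear algebra, so this forces $\det(A_0-\lambda\1_n)=0$; that is, every central eigenvalue of $A$ is a classical eigenvalue of the scalar matrix $A_0$.

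Since $A_0\in M_n(\C)$ admits at most $n$ distinct eigenvalues, we conclude $|\sigma^\full(A)|\le n$, which is the desired bound. I do not expect any serious obstacle here: the entire argument is a one-line reduction, and its only real content is the monotonicity of the inner rank under ring homomorphisms, which holds for any target algebra and in particular makes fullness stable under specialization of the variables. One could equally well invoke any other evaluation $\C\langle x_1,\dots,x_d\rangle\to\C$, each such choice producing a scalar matrix whose classical spectrum contains $\sigma^\full(A)$.
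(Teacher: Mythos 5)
Your proof is correct, and it takes a genuinely different and more elementary route than the paper. The core idea---that the inner rank can only decrease under a unital algebra homomorphism, applied to evaluation at a scalar point---is sound: any rank factorization of $A-\lambda\1_n$ over $\C\langle x_1,\dots,x_d\rangle$ maps under $\phi_0$ to a factorization of $A_0-\lambda\1_n$ over $\C$ of the same width, and since $\rho_\C$ agrees with the usual rank on $M_n(\C)$, you obtain $\sigma^{\full}(A)\subseteq\sigma(A_0)$, whence the count bound. Note that this in fact subsumes Proposition \ref{prop:spectrum of linear matrices}\ref{it:spectrum of linear matrices_1} of the paper and extends it from linear to arbitrary $A$, and that you even get the sharper containment $\sigma^{\full}(A)\subseteq\bigcap_{a\in\C^d}\sigma(A(a))$.

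The paper goes a very different way: it deliberately avoids a direct algebraic argument in favor of an analytic one. It picks any tuple $X=(X_1,\dots,X_n)$ with $\Delta(X)=n$, uses Theorem \ref{thm:central eigenvalues and spectrum} to identify $\sigma^\full(P)$ with the point spectrum $\sigma_p(P(X))$ of the affiliated operator $P(X)$, and then caps $|\sigma_p(P(X))|$ by $N$ via a trace estimate on orthogonal sums of eigenspace projections (Proposition \ref{prop:finite atoms}). The trade-off is clear: your argument is a two-line reduction that is entirely self-contained and requires none of the paper's operator-theoretic machinery, while the paper's detour is chosen precisely to illustrate the strength of the bridge between the algebraic rank $\rho$ and the analytic rank; it also delivers quantitative extra information (the exact atom sizes $\mu_{P(X)}(\{\lambda\})=\tfrac{1}{N}(N-\rho(P-\lambda\1_N))$) that your specialization argument does not touch. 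Both proofs are valid; yours is the shorter path to the stated cardinality bound.
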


Cohn's proof of this theorem relied on some involved algebraic considerations.
In Section \ref{subsec:regularity_polynomials_matrices}, we will give an alternative proof of Proposition \ref{prop:finite spectrum}, based on our results.

For linear matrices over $\C\langle x_1,\dots,x_n \rangle$, we can say more about their central eigenvalues.

\begin{proposition}\label{prop:spectrum of linear matrices}
Let any linear $A \in M_n(\C\langle x_1,\dots,x_d \rangle)$ of the form $A = A_0 + A_1 x_1 + \cdots + A_d x_d$ with $A_0,A_1,\dots,A_d\in M_n(\C)$ be given. Then the following statements hold.
\begin{enumerate}
 \item\label{it:spectrum of linear matrices_1} We have that $\sigma^\full(A) \subset \sigma(A_0)$, where $\sigma(A_0)$ is the usual spectrum of $A_0$ consisting of all eigenvalues of $A_0$.
 \item\label{it:spectrum of linear matrices_2} If the homogeneous part $A - A_0$ of $A$ is full, then $\sigma^\full(A) = \emptyset$.
\end{enumerate}
\end{proposition}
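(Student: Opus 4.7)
The plan is to deduce both parts from Theorem \ref{thm:zero block}, which tells us that a linear matrix over $\C\langle x_1,\dots,x_d\rangle$ fails to be full precisely when it can be transformed, by left and right multiplication with invertible scalar matrices, into a hollow form. The key observation to exploit is the following decomposition principle: if $A = A_0 + A_1 x_1 + \cdots + A_d x_d$ is linear and $U, V \in M_n(\C)$ are invertible, then $UAV = UA_0 V + (UA_1 V)x_1 + \cdots + (UA_d V)x_d$ splits into its homogeneous components of degree $0$ and $1$ in $x_1,\dots,x_d$. Since the monomials $1, x_1, \dots, x_d$ are $\C$-linearly independent in $\C\langle x_1,\dots,x_d\rangle$, if $UAV$ has an $r \times s$ block of zeros (with $r + s > n$) at some prescribed rows and columns, then both the constant part $UA_0 V$ and the homogeneous part $U(A - A_0)V = \sum_{i=1}^d (UA_iV)x_i$ must vanish in exactly the same $r \times s$ block, so that both of them are hollow.

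For item \eqref{it:spectrum of linear matrices_1}, I would suppose toward a contradiction that $\lambda \in \sigma^\full(A) \setminus \sigma(A_0)$, so that $A - \lambda \1_n$ is not full while $A_0 - \lambda \1_n$ is invertible in $M_n(\C)$. Applying Theorem \ref{thm:zero block} to the linear matrix $A - \lambda \1_n$, whose constant part is $A_0 - \lambda \1_n$, yields invertible scalar matrices $U, V \in M_n(\C)$ such that $U(A - \lambda\1_n)V$ is hollow. By the decomposition principle, its constant part $U(A_0 - \lambda \1_n)V$ is then also hollow, hence not full as a matrix over $\C$, which contradicts the invertibility of $A_0 - \lambda \1_n$.

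For item \eqref{it:spectrum of linear matrices_2}, I would assume that $A - A_0$ is full and suppose toward a contradiction that there is some $\lambda \in \sigma^\full(A)$. Then $A - \lambda \1_n$ is not full, so Theorem \ref{thm:zero block} supplies invertible $U, V \in M_n(\C)$ with $U(A - \lambda\1_n)V$ hollow. By the same decomposition principle, the homogeneous part of $U(A - \lambda\1_n)V$, namely $U(A - A_0)V$, inherits the same zero block and is therefore hollow, hence not full. But multiplication by the invertible scalar matrices $U$ and $V$ preserves fullness over $\C\langle x_1,\dots,x_d\rangle$, so $A - A_0$ would fail to be full, contradicting the hypothesis.

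I do not expect any serious obstacle here: the argument is essentially bookkeeping around Theorem \ref{thm:zero block}. The only point requiring any care is the observation that, because monomials of distinct total degree are $\C$-linearly independent, a zero entry in a sum of polynomials of different degrees forces each homogeneous summand to vanish at that position; once this is in hand, both items reduce to the same pattern of pulling out either the constant part (for (i)) or the degree-one part (for (ii)) of a hollow linear matrix.
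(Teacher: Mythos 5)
Your proposal is correct and follows essentially the same route as the paper's proof: apply Theorem \ref{thm:zero block} to $A - \lambda\1_n$, observe that the hollow zero block must be inherited by both the constant part $U(A_0-\lambda\1_n)V$ and the homogeneous part $\sum_j(UA_jV)x_j$, and then derive the contradictions from the fact that conjugation by invertible scalar matrices preserves invertibility (for (i)) and fullness (for (ii)). The only cosmetic difference is that you argue by contradiction in both parts while the paper phrases (i) directly, but the content is identical.
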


\begin{proof}
Let $\lambda\in\sigma^{\full}(A)$ be given. By definition, this means that $A-\lambda \1_n$ is not full, so that Theorem \ref{thm:zero block} guarantees the existence of invertible matrices $U,V\in M_N(\C)$ for which
$$U (A - \lambda \1_n) V = U (A_0 - \lambda \1_n) V + \sum_{j=1}^d (U A_j V) x_j$$
is hollow. Due to linearity, this enforces both $U (A_0 - \lambda \1_n) V$ and $\sum_{j=1}^d (U A_j V) x_j$ to be hollow.
Now, on the one hand, it follows that neither $U (A_0 - \lambda \1_N) V$ nor $A_0 - \lambda \1_N$, thanks to the invertibility of $U$ and $V$, can be invertible; thus, we infer that $\lambda \in \sigma(A_0)$, which shows the validity of \ref{it:spectrum of linear matrices_1}.
On the other hand, we see that neither $\sum_{j=1}^d (U A_j V) x_j$ nor $\sum_{j=1}^d A_j x_j$, by the invertibility of $U$ and $V$, can be full; thus, if the homogeneous part of $A$ is assumed to be full, that contradiction rules out the existence of $\lambda\in\sigma^{\full}(A)$, which proves \ref{it:spectrum of linear matrices_2}. 
\end{proof}

\section{Maximality of $\Delta$ and triviality of kernels of linear full matrices}
\label{sec:maximality of Delta}

Let $(\M,\tau)$ be a tracial $W^\ast$-probability space (i.e., a von Neumann algebra $\M$ that is endowed with a faithful normal tracial state $\tau: \M \to \C$) and consider a tuple $X=(X_1,\dots,X_n)$
of (not necessarily selfadjoint) noncommutative random variables in $\M$.
A quantity $\Delta(X)$ was introduced in \cite{CS05} as
$$\Delta(X) := n - \dim_{M \ootimes M^{\operatorname{op}}} \overline{\Big\{(T_1,\dots,T_n) \in \F(L^2(\M,\tau))^n \mid \sum^n_{j=1} [T_j, J X_j J] = 0\Big\}}^{\operatorname{HS}}.$$
Here, we denote by $\F(L^2(\M,\tau))$ the ideal of all finite rank operators on $L^2(\M,\tau)$ and by $J$ Tomita's conjugation operator, i.e., the conjugate-linear map $J: L^2(\M,\tau) \to L^2(\M,\tau)$ that extends isometrically the conjugation $x \mapsto x^\ast$ on $\M$; the closure is taken with respect to the Hilbert-Schmidt norm.
Note that for the Hochschild homology of $\C\langle X_1,\dots,X_n \rangle$ with coefficients in the Hilbert-Schmidt operators on $L^2(\M,\tau)$, which was studied in \cite{CS05}, tuples $(T_1,\dots,T_n)$ of finite rank operators on $L^2(\M,\tau)$ satisfying $\sum^n_{j=1} [T_j, J X_j J] = 0$ are precisely the Hochschild $1$-cycles.

Our main goal in this section is to prove the equivalence of items (ii) and (v) in Theorem \ref{thm:intro-main}, i.e., to see that the maximality of $\Delta$ is the property of a tuple $X=(X_1,\dots,X_n)$ which decides upon whether evaluations of full linear matrices in $X$ have no kernel (which, as we will see in Section \ref{subsec:free field realization}, is the same as saying that those are invertible as unbounded operators).

\begin{theorem}\label{thm:maximality of Delta}
Let $(\mathcal{M},\tau)$ be a tracial $W^{\ast}$-probability space and $X=(X_{1},...,X_{n})$ a tuple of (not necessarily selfadjoint) random variables in $\mathcal{M}$. Then the following are equivalent.
\begin{enumerate}
\item $\Delta(X)=n$.
\item For any $N\in\mathbb{N}$ and any linear matrix $A\in M_{N}(\C\left\langle x_{1},\dots,x_{n}\right\rangle)$ we have: if there are $f\in\ker A(X)$ and $e\in\ker A(X)^{\ast}$ such that both $e$ and $f$ have $\mathbb{C}-$linear independent components, then $A=0$.
\item For any $N\in\mathbb{N}$ and any linear full matrix $A\in M_{N}(\C\left\langle x_{1},\dots,x_{n}\right\rangle)$ we have: $\ker A(X)=\{0\}$.
\end{enumerate}
\end{theorem}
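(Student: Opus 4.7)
My approach is to bridge the dimension-theoretic condition (i) and the matrix conditions (ii), (iii) by exhibiting an explicit correspondence between linear pencils $A \in M_N(\C\langle x_1,\dots,x_n\rangle)$ equipped with kernel data $f \in \ker A(X)$, $e \in \ker A(X)^\ast$ having $\C$-linearly independent components, on one side, and nonzero tuples of finite rank operators $(T_1,\dots,T_n) \in \F(L^2(\M,\tau))^n$ satisfying the Hochschild cycle relation $\sum_{k=1}^n [T_k, JX_kJ] = 0$, on the other. The equivalence (i) $\Leftrightarrow$ (ii) will follow from this correspondence together with a Murray--von Neumann dimension argument, while (ii) $\Leftrightarrow$ (iii) will be obtained from the hollow-form Theorem \ref{thm:zero block}.

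For the correspondence, given $A = A_0 + \sum_{k=1}^n A_k x_k$ with $A_k = (a_{ij}^{(k)}) \in M_N(\C)$ together with $f = (f_1,\dots,f_N)^T$, $e = (e_1,\dots,e_N)^T \in L^2(\M,\tau)^N$ satisfying both kernel equations $\sum_j a_{ij}^{(0)} f_j + \sum_{j,k} a_{ij}^{(k)} X_k f_j = 0$ and its $e$-counterpart, I define finite-rank operators of the form (modulo index and conjugation conventions dictated by the anti-linearity of $J$)
\[
T_k \;:=\; \sum_{i,j=1}^N a_{ij}^{(k)}\, \lvert f_j \rangle \langle J e_i \rvert, \qquad k=1,\dots,n.
\]
A direct computation, using the identity $(JX_kJ)(Je_i) = J(X_k^\ast e_i)$ together with the two kernel equations, then yields $\sum_{k=1}^n [T_k, JX_kJ] = 0$: the sums in which $A_1,\dots,A_n$ appear are folded into the respective kernel equations, leaving symmetric $A_0$-contributions from both sides of each commutator that cancel. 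The $\C$-linear independence of the $f_j$ and of the $e_i$ ensures that the assignment is injective and that $A \neq 0$ corresponds to $(T_1,\dots,T_n) \neq 0$. Conversely, any nonzero Hochschild cycle admits, after a choice of bases for the ranges and cokernels of the $T_k$, such a presentation, with the commutator relation forcing each vector $\sum_{j,k} a_{ij}^{(k)} X_k f_j$ to lie in $\operatorname{span}_\C\{f_1,\dots,f_N\}$, thereby supplying the constant matrix $A_0$.

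The passage to (i) is dimension-theoretic. The space $\mathcal{K}(X)$ of Hochschild cycles is a linear subspace of $\F(L^2(\M,\tau))^n$ that is preserved by both the left and right $\M$-actions (since $JX_kJ \in \M'$), hence an $\M \ootimes \M^\op$-sub-bimodule; its Hilbert--Schmidt closure $\overline{\mathcal{K}(X)}^{\operatorname{HS}}$ is a closed sub-bimodule of the standard bimodule $(L^2(\M,\tau) \otimes L^2(\M,\tau))^n$. Since a closed sub-bimodule of this standard bimodule over a finite tracial von Neumann algebra has dimension zero precisely when it is trivial (its dimension equals the trace of the projection onto it), $\Delta(X) = n$ is equivalent to $\mathcal{K}(X) = \{0\}$, which by the bijection amounts to (ii). For (ii) $\Leftrightarrow$ (iii), I use Theorem \ref{thm:zero block}: if $A \neq 0$ satisfies the hypothesis of (ii), then under (iii) it cannot be full, so scalar invertibles $U,V$ expose a hollow block form $UAV$; the transformed kernel vectors $V^{-1} f$, $U^\ast e$ retain $\C$-linear independence of components, and the induced smaller sub-pencil still violates (ii), leading by induction to a full linear pencil with nontrivial kernel contradicting (iii). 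Conversely, a full linear $A$ with $\ker A(X) \neq \{0\}$ also has $\ker A(X)^\ast \neq \{0\}$ (finite trace of $M_N(\M)$), and eliminating linear dependencies among components of $f$ or $e$ by passing to a smaller full linear sub-pencil eventually triggers (ii) and forces $A = 0$, contradicting fullness.

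The main obstacle I foresee is the converse direction of the bijection, namely recovering a linear pencil from a general Hochschild cycle. The commutator relation $\sum_k[T_k, JX_kJ] = 0$ does not directly produce the scalar term $A_0$; one must exploit the finite-rank hypothesis together with a careful choice of basis to argue that the vectors $\sum_{j,k} a_{ij}^{(k)} X_k f_j$ are automatically $\C$-linear combinations of $f_1, \dots, f_N$, so that the entries of $A_0$ can be read off. A secondary but still delicate issue is the inductive reduction used in (ii) $\Leftrightarrow$ (iii): one must ensure that shrinking $A$ preserves both linearity and fullness of the sub-pencil while continuing to witness a nontrivial kernel.
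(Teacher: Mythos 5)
Your proposal follows essentially the same route as the paper: a bijection between linear pencils with kernel data and finite-rank tuples satisfying the Hochschild $1$-cycle relation (the $J$-conjugation bookkeeping is exactly how the paper converts between $[T_k,JX_kJ]$ and $[T_k,X_k]$), the Murray--von Neumann dimension argument to pass from $\Delta(X)=n$ to triviality of the cycle space, and Theorem~\ref{thm:zero block} plus an induction on matrix size for (ii)$\Leftrightarrow$(iii). The two concerns you flag are indeed the technical cores and are resolved in the paper as follows: the constant block $A_0$ in the converse direction is obtained (Lemma~\ref{lem:linear matrix}) by taking $f$ to be an orthonormal basis of the finite-dimensional space $\sum_{k}(\im T_k + \im T_k^\ast)$, writing $T_k=\sum_{i,j}A^{(k)}_{ij}\langle\cdot,f_i\rangle f_j$, and observing that the cycle relation applied to $f_p$ forces $\sum_{j,k}A^{(k)}_{pj}X_kf_j=\sum_jA^{(0)}_{pj}f_j$ with $A^{(0)}=\sum_kB^{(k)}A^{(k)}$, $B^{(k)}_{pi}=\langle X_kf_p,f_i\rangle$; the inductive shrinking to smaller \emph{full linear} subpencils is supplied by Proposition~\ref{prop:full minor-one column omitted} (full $(N{-}1)\times(N{-}1)$ row-submatrix of a full $N\times(N{-}1)$ block), together with a case distinction in the direction (iii)$\Rightarrow$(ii) according to whether the block $B$ in $UAV$ from Theorem~\ref{thm:zero block} disappears ($S=\rho(A)$), where one instead passes to the adjoint $C_2^\ast$ and uses the kernel vector $e$.
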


The equivalence of (i) and (iii) is the first step for proving Theorem \ref{thm:intro-main}. In Section \ref{sec:affiliated_operators_and_Atiyah}, we will extend this list of equivalent properties to the full list of Theorem \ref{thm:intro-main}.

\begin{remark}
Before giving the proof, the following remarks are in order.
\begin{enumerate}
\item Suppose that $\Delta(X_1,\dots,X_n)<n$. By definition, this means that there is a tuple $(T_1,\dots,T_n) \neq (0,\dots,0)$ of finite rank operators in $B(L^2(\M,\tau))$ with the property that $\sum^n_{j=1} [T_j, J X_j J] = 0$; we infer from the latter that $(JT_1J,\dots,JT_nJ)$, which is again a non-trivial tuple of finite rank operators on $L^2(\M,\tau)$, satisfies $\sum^n_{j=1} [J T_j J, X_j] = 0$.
\item There is a natural anti-linear involution $\ast$ on $\C\langle x_1,\dots,x_n\rangle$ determined by $1^\ast=1$ and $x_j^\ast = x_j$ for $j=1,\dots,n$. Then for any matrix $A\in M_{N}(\C\left\langle x_{1},\dots,x_{n}\right\rangle)$ and any tuple $X$ over $M$, we see that
\[
A(X)^\ast=A^\ast(X^\ast),
\]
where $X^\ast:=(X_1^\ast,\cdots,X_n^\ast)$.
\end{enumerate}
\end{remark}

\subsection{Proof of Theorem \ref{thm:maximality of Delta}}

First, we need to recall the following well-known result. The interested reader can find a detailed proof of this statement in \cite{MSW17}.

\begin{lemma}\label{lem:kernels}
Let $X$ be an element of any tracial $W^\ast$-probability space $(\M,\tau)$ over some complex Hilbert space $H$. Let $p_{\ker(X)}$ and $p_{\ker(X^\ast)}$ denote the orthogonal projections onto $\ker(X)$ and $\ker(X^\ast)$, respectively.

The projections $p_{\ker(X)}$ and $p_{\ker(X^\ast)}$ belong both to $\M$ and satisfy
$$\tau(p_{\ker(X)}) = \tau(p_{\ker(X^\ast)}).$$
Thus, in particular, if $\ker(X)$ is non-zero, then also $\ker(X^\ast)$ is a non-zero subspace of $H$.
\end{lemma}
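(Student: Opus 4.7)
The plan is to invoke polar decomposition in the finite von Neumann algebra $\M$ and then exploit the trace property of $\tau$. Since $X\in\M$ and $\M$ is a von Neumann algebra, the absolute value $|X|=(X^{\ast}X)^{1/2}$ lies in $\M$, and there exists a partial isometry $V\in\M$ such that $X=V|X|$, with initial projection $V^{\ast}V$ equal to the projection onto $\overline{\operatorname{ran}(|X|)}=\overline{\operatorname{ran}(X^{\ast})}$, and final projection $VV^{\ast}$ equal to the projection onto $\overline{\operatorname{ran}(X)}$.

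Next, I would identify the two kernel projections in terms of $V$. Since $\ker(X)=\ker(|X|)$ is the orthogonal complement of $\overline{\operatorname{ran}(X^{\ast})}$, I get
$$p_{\ker(X)} = \1 - V^{\ast}V,$$
and in particular $p_{\ker(X)}\in\M$. Applying the same reasoning to $X^{\ast}$, whose polar decomposition is $X^{\ast}=V^{\ast}|X^{\ast}|$ with $|X^{\ast}|=(XX^{\ast})^{1/2}\in\M$, I get
$$p_{\ker(X^{\ast})} = \1 - VV^{\ast},$$
so $p_{\ker(X^{\ast})}\in\M$ as well. (Alternatively, both projections arise as spectral projections of $|X|$ and $|X^{\ast}|$ at $\{0\}$, which makes their membership in $\M$ automatic.)

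Finally, the traciality of $\tau$ yields $\tau(V^{\ast}V)=\tau(VV^{\ast})$, and subtracting from $\tau(\1)=1$ on each side gives
$$\tau(p_{\ker(X)}) = \tau(\1)-\tau(V^{\ast}V) = \tau(\1)-\tau(VV^{\ast}) = \tau(p_{\ker(X^{\ast})}),$$
which is the asserted equality. The last assertion, that $\ker(X)\neq\{0\}$ forces $\ker(X^{\ast})\neq\{0\}$, then follows from faithfulness of $\tau$: a non-zero projection $p_{\ker(X)}\in\M$ must have $\tau(p_{\ker(X)})>0$, hence $\tau(p_{\ker(X^{\ast})})>0$, and so $p_{\ker(X^{\ast})}\neq 0$. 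There is no real obstacle here; the only point that requires a little care is recording that polar decomposition can be carried out inside $\M$ (a standard fact for von Neumann algebras) and checking the identification of the kernel projections with $\1-V^{\ast}V$ and $\1-VV^{\ast}$.
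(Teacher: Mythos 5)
Your proof is correct and uses the approach the paper itself points to: the paper defers to \cite{MSW17} for details of Lemma~\ref{lem:kernels}, but the surrounding discussion preceding its unbounded analogue (Lemma~\ref{lem:kernels unbdd}) proceeds exactly as you do, via polar decomposition $X=V|X|$ and traciality applied to $V^{\ast}V$ and $VV^{\ast}$. Your identification of the kernel projections with $\1-V^{\ast}V$ and $\1-VV^{\ast}$, the membership in $\M$, and the final faithfulness argument are all sound.
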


Before giving the proof of Theorem \ref{thm:maximality of Delta}, we single out the following two lemmas. These lemmas highlight an explicit way on how we can construct finite rank operators satisfying the commutator relation from linear matrices with kernel vectors and vice versa.

\begin{lemma}
\label{lem:finite rank operators}Given a linear matrix $A=A^{(0)}+A^{(1)} x_1+\cdots+A^{(n)}x_n$ in $M_{N}(\C\left\langle x_{1},\dots,x_{n}\right\rangle)$
with vectors $f=(f_1,\dots,f_N)\in\ker A(X)$ and $e=(e_1,\dots,e_N)\in\ker A(X)^{\ast}$, we define
finite rank operators
\begin{equation}\label{eq:finite rank operators}
T_{k}:=\sum_{i,j=1}^{N}A_{ij}^{(k)}\left<\cdot,e_{i}\right>f_{j},\ k=0,\dots,n,
\end{equation}
where $A_{ij}^{(k)}$ ($i,j=1,\dots,N$) are the entries of $A^{(k)}$ for $k=0,\dots,n$ and where, for $e,f\in L^2(\mathcal{M},\tau)$, $\left<\cdot,e\right>f$ denotes the operator of rank 1, which maps any $v\in L^{2}(\mathcal{M},\tau)$ to $\left<v,e\right>f$.
Then $T_{1},\dots,T_{n}$ satisfy
\[
\sum\limits _{k=1}^{n}[T_{k},X_{k}]=0.
\]
\end{lemma}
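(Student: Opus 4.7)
The plan is a direct calculation that unfolds the commutators and then uses the two kernel hypotheses to make the boundary ($k=0$) terms cancel.

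First I would record two elementary manipulation rules for rank-one operators on $L^{2}(\M,\tau)$: for any $e,f\in L^2(\M,\tau)$ and any bounded $Y\in B(L^{2}(\M,\tau))$ one has
\[
Y\cdot\langle\cdot,e\rangle f \;=\; \langle\cdot,e\rangle Yf
\qquad\text{and}\qquad
\langle\cdot,e\rangle f\cdot Y \;=\; \langle\cdot,Y^{\ast}e\rangle f.
\]
Applying these to $T_{k}=\sum_{i,j}A_{ij}^{(k)}\langle\cdot,e_{i}\rangle f_{j}$ with $Y=X_{k}$ gives, for each $k=1,\dots,n$,
\[
[T_{k},X_{k}] \;=\; \sum_{i,j}A_{ij}^{(k)}\bigl(\langle\cdot,X_{k}^{\ast}e_{i}\rangle f_{j}-\langle\cdot,e_{i}\rangle X_{k}f_{j}\bigr).
\]
After summing over $k$ I would separate the result into two pieces: one that groups the vectors $f_{j}$ (this comes from the $X_{k}T_{k}$ part), and one that groups the functionals $\langle\cdot,e_{i}\rangle$ (this comes from the $T_{k}X_{k}$ part).

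For the $X_{k}T_{k}$ piece, I write it as $\sum_{i}\langle\cdot,e_{i}\rangle\bigl(\sum_{j}\sum_{k=1}^{n}A_{ij}^{(k)}X_{k}f_{j}\bigr)$ and invoke the hypothesis $A(X)f=0$, i.e. $\sum_{j}\sum_{k=0}^{n}A_{ij}^{(k)}X_{k}f_{j}=0$ for every $i$ (with $X_{0}=\1$). This lets me replace $\sum_{j}\sum_{k=1}^{n}A_{ij}^{(k)}X_{k}f_{j}$ by $-\sum_{j}A_{ij}^{(0)}f_{j}$, so this piece equals exactly $-T_{0}$ (applied to any test vector). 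For the $T_{k}X_{k}$ piece, I write it as $\sum_{j}\langle\cdot,w_{j}\rangle f_{j}$ where, using anti-linearity of the second slot, $w_{j}:=\sum_{i}\sum_{k=1}^{n}\overline{A_{ij}^{(k)}}X_{k}^{\ast}e_{i}$. The hypothesis $A(X)^{\ast}e=0$ reads $\sum_{i}\sum_{k=0}^{n}\overline{A_{ij}^{(k)}}X_{k}^{\ast}e_{i}=0$ for every $j$, so $w_{j}=-\sum_{i}\overline{A_{ij}^{(0)}}e_{i}$, and consequently this piece equals $+T_{0}$. Adding the two pieces gives $-T_{0}+T_{0}=0$, which proves $\sum_{k=1}^{n}[T_{k},X_{k}]=0$.

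The calculation is essentially bookkeeping; the only conceptual point worth flagging is the reason the construction deliberately includes $T_{0}$ in its definition, even though the statement concerns only $T_{1},\dots,T_{n}$: the coefficients $A_{ij}^{(0)}$ are precisely what the two kernel conditions produce as the residual ``boundary contribution'' when truncating the sum over $k$ from $\{0,\dots,n\}$ to $\{1,\dots,n\}$, and these two residuals must appear with opposite signs so as to cancel. Getting the signs and the conjugation in the $\langle\cdot,X_{k}^{\ast}e_{i}\rangle$ term to match the complex conjugation introduced when pulling a scalar out of the second slot of the inner product is the only place where I would expect to have to be careful, but no real obstacle is present.
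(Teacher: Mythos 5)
Your approach is the same as the paper's (the paper writes the two kernel relations entrywise and then tensors with the $e_i$'s or $f_i$'s; you wrap the same computation in rank-one operator identities), but there is a sign slip precisely at the spot you flagged as needing care.

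The value of the $T_{k}X_{k}$ piece should be $-T_0$, not $+T_0$, so that it \emph{equals} the $X_kT_k$ piece rather than being its negative. Concretely, with
\[
w_{j}=-\sum_{i}\overline{A_{ij}^{(0)}}\,e_{i},
\]
pulling the scalar $-\overline{A_{ij}^{(0)}}$ out of the anti-linear slot gives $\langle\cdot,w_j\rangle = -\sum_i A_{ij}^{(0)}\langle\cdot,e_i\rangle$, and hence
\[
\sum_{k=1}^{n}T_{k}X_{k}=\sum_{j}\langle\cdot,w_{j}\rangle f_{j} = -\sum_{i,j}A_{ij}^{(0)}\langle\cdot,e_{i}\rangle f_{j}=-T_{0}.
\]
The paper records exactly this pair of identities (its equations $T_0 + \sum_k X_kT_k = 0$ and $T_0 + \sum_k T_kX_k = 0$). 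Once both pieces are $-T_0$, the conclusion follows from \emph{subtracting}, since $\sum_k [T_k,X_k] = \sum_k T_kX_k - \sum_k X_kT_k = (-T_0) - (-T_0) = 0$. As written, you have $\sum_k T_kX_k = +T_0$ and $\sum_k X_kT_k = -T_0$, which would give $\sum_k[T_k,X_k] = 2T_0$, not $0$; the ``adding the two pieces'' step then compounds the first sign error with a second one, and the two happen to cancel. The structure of the argument is right; only this bookkeeping needs to be fixed.
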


\begin{proof}
First, we write $A(X)f=0$ in entries, namely,
\[
\sum_{j=1}^{N}A_{ij}^{(0)}f_{j}+\sum_{k=1}^{n}\sum_{j=1}^{N}A_{ij}^{(k)}X_{k}f_{j}=0,\ \forall i=1,\dots,N.
\]
Then for each vector $e_{i}$ ($i=1,\dots,N$)  we get an equality
of finite rank operators on $L^{2}(\mathcal{M},\tau)$ from the above, that is,
\[
\sum_{j=1}^{N}A_{ij}^{(0)}\left<\cdot,e_{i}\right>f_{j}+\sum_{k=1}^{n}\sum_{j=1}^{N}A_{ij}^{(k)}\left<\cdot,e_{i}\right>X_{k}f_{j}=0,\ \forall i=1,\dots,N.
\]
Summing those equations over the index $i$ gives
\[
\sum_{i,j=1}^{N}A_{ij}^{(0)}\left<\cdot,e_{i}\right>f_{j}+\sum_{k=1}^{n}\sum_{i,j=1}^{N}A_{ij}^{(k)}\left<\cdot,e_{i}\right>X_{k}f_{j}=0,
\]
or equivalently,
\begin{equation}
T_{0}+\sum_{k=1}^{n}X_{k}T_{k}=0.\label{eq:commutator-1}
\end{equation}
Similarly, from $A(X)^{\ast}e=0$, i.e.,
\[
\sum_{j=1}^{N}\overline{A_{ji}^{(0)}}e_{j}+\sum_{k=1}^{n}\sum_{j=1}^{N}\overline{A_{ji}^{(k)}}X_{k}^{\ast}e_{j}=0,\ \forall i=1,\dots,N,
\]
we have
\[
\sum_{j=1}^{N}\left<\cdot,\overline{A_{ji}^{(0)}}e_{j}\right>f_{i}+\sum_{k=1}^{n}\sum_{j=1}^{N}\left<\cdot,\overline{A_{ji}^{(k)}}X_{k}^{\ast}e_{j}\right>f_{i}=0,\ \forall i=1,\dots,N,
\]
which yields
\[
\sum_{i,j=1}^{N}A_{ji}^{(0)}\left<\cdot,e_{j}\right>f_{i}+\sum_{k=1}^{n}\sum_{i,j=1}^{N}A_{ji}^{(k)}\left<X_{k}\cdot,e_{j}\right>f_{i}=0.
\]
Actually, that is
\begin{equation}
T_{0}+\sum_{k=1}^{n}T_{k}X_{k}=0.\label{eq:commutator-2}
\end{equation}
Therefore, combining (\ref{eq:commutator-1}) and (\ref{eq:commutator-2}),
we conclude that $\sum_{k=1}^{n}[T_{k},X_{k}]=0$.
\end{proof}

\begin{lemma}\label{lem:linear matrix}
Suppose that $X=(X_{1},\dots,X_{n})$ is a tuple of operators in $\mathcal{M}$ and $(T_1,\dots,T_n)$ is a tuple of finite rank operators on $L^2(\M,\tau)$ satisfying
\begin{equation}\label{eq:commutator-3}
\sum_{k=1}^n[T_{k},X_{k}]=0.
\end{equation}
Let $f=(f_1,\dots,f_N)$ be an orthonormal family which spans the space of the sum of subspaces $\im T_k+\im T^\ast_k$ over all $k=1,\dots,n$. Let us write each $T_k$ as
\begin{equation}\label{eq:finite rank operators-2}
T_k = \sum_{i,j=1}^N A_{ij}^{(k)}\left<\cdot,f_i\right>f_j,\ k=1,\dots,n.
\end{equation}
Consider the linear matrix
\begin{equation}\label{eq:linear matrix}
A:=A^{(0)}-(A^{(1)}x_1+\cdots+A^{(n)}x_n),
\end{equation}
where $A^{(k)}:=(A^{(k)}_{ij})_{i,j=1}^N\in M_N(\C)$, for $k=1,\dots,n$, and $A^{(0)}$ is given by
$$
A^{(0)}:=\sum_{k=1}^n B^{(k)}A^{(k)},\qquad\text{where}\qquad
B^{(k)}:=(\left<X_kf_i,f_j\right>)_{i,j=1}^N\quad (k=1\dots,n).
$$
Then $A(X)$
satisfies
$A(X)f=0$ and $A(X)^\ast f=0$.
\end{lemma}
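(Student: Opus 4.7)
The plan is a direct coordinate-wise verification: I compute the $i$-th component of $A(X)f$ (and then of $A(X)^{\ast}f$) as a vector in $L^{2}(\M,\tau)$ and observe that it vanishes as a consequence of $\sum_{k}[T_{k},X_{k}]=0$. The structural input that drives the whole calculation is that both $\im T_{k}$ and $\im T_{k}^{\ast}$ are contained in $\LH\{f_{1},\dots,f_{N}\}$, so if $P$ denotes the orthogonal projection onto that span, then $T_{k}=PT_{k}P$ and $T_{k}^{\ast}=PT_{k}^{\ast}P$ for every $k$. This is what lets me absorb the $X_{k}$'s into the matrix picture through $PX_{k}P$, whose matrix entries in the basis $(f_{i})$ are precisely the scalars $B^{(k)}_{ij}$.

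For $A(X)f=0$, I unravel $(A(X)f)_{i}=\sum_{j}A^{(0)}_{ij}f_{j}-\sum_{k,j}A^{(k)}_{ij}X_{k}f_{j}$, observe that $X_{k}T_{k}f_{i}=\sum_{j}A^{(k)}_{ij}X_{k}f_{j}$, and use the relation $T_{k}=T_{k}P$ together with $PX_{k}f_{i}=\sum_{l}B^{(k)}_{il}f_{l}$ to rewrite $T_{k}X_{k}f_{i}=\sum_{l}B^{(k)}_{il}T_{k}f_{l}=\sum_{j}(B^{(k)}A^{(k)})_{ij}f_{j}$. Summing over $k$, the hypothesis $\sum_{k}[T_{k},X_{k}]f_{i}=0$ becomes $\sum_{j}\bigl(\sum_{k}B^{(k)}A^{(k)}\bigr)_{ij}f_{j}=\sum_{k,j}A^{(k)}_{ij}X_{k}f_{j}$, which by the definition $A^{(0)}=\sum_{k}B^{(k)}A^{(k)}$ is precisely $(A(X)f)_{i}=0$.

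For $A(X)^{\ast}f=0$, I take adjoints in the commutator identity to get $\sum_{k}[T_{k}^{\ast},X_{k}^{\ast}]=0$, and run exactly the same argument with $T_{k}^{\ast}=\sum_{i,j}\overline{A^{(k)}_{ij}}\langle\cdot,f_{j}\rangle f_{i}$ and $X_{k}^{\ast}$ in place of $T_{k}$ and $X_{k}$. The analogous manipulation produces the vanishing of $(A(X)^{\ast}f)_{i}$, but with one subtle twist: the calculation now naturally demands the \emph{alternative} formula $A^{(0)}=\sum_{k}A^{(k)}B^{(k)}$ rather than the one used above.

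The only genuine point left, and the main (modest) obstacle, is to check that the two formulas for $A^{(0)}$ coincide. This is where I expect the real work. I plan to obtain it by compressing $\sum_{k}[T_{k},X_{k}]=0$ from both sides by $P$: since $T_{k}=PT_{k}P$, one has $\sum_{k}[PT_{k}P,PX_{k}P]=0$ on $\LH\{f_{1},\dots,f_{N}\}$, and translating into matrices in the basis $(f_{i})$ this reads, up to an overall transposition, as $\sum_{k}[A^{(k)},B^{(k)}]=0$ in $M_{N}(\C)$; equivalently, $\sum_{k}B^{(k)}A^{(k)}=\sum_{k}A^{(k)}B^{(k)}$. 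Hence the definition of $A^{(0)}$ given in the statement is consistent with both displays, and both kernel relations follow. Apart from this compatibility check, the remaining work is pure bookkeeping with indices and transpositions in switching between the operator picture and the matrix picture.
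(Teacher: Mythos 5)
Your proposal is correct and follows essentially the same route as the paper: substitute the explicit form of the $T_k$ into the commutator relation, apply it to the vectors $f_i$, use orthonormality to extract $A(X)f=0$, take adjoints for $A(X)^\ast f=0$, and observe that the constant block $A^{(0)}$ is given equally by $\sum_k B^{(k)}A^{(k)}$ and by $\sum_k A^{(k)}B^{(k)}$. The only cosmetic difference is that you package the index bookkeeping via the projection $P$ onto $\operatorname{span}\{f_1,\dots,f_N\}$ and obtain $\sum_k[B^{(k)},A^{(k)}]=0$ by compressing the commutator identity as $P\bigl(\sum_k[T_k,X_k]\bigr)P=0$, whereas the paper extracts the same identity by pairing $A(X)f_p=0$ against $f_q$; both are sound, and your version is perhaps a touch more transparent.
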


\begin{proof}
Substituting the form of $T_k$ from \eqref{eq:finite rank operators-2} into the relation \eqref{eq:commutator-3}, we have
\[
\sum_{k=1}^n\sum_{i,j=1}^N A_{ij}^{(k)}\left<X_k\cdot,f_i\right>f_j=\sum_{k=1}^n\sum_{i,j=1}^N A_{ij}^{(k)}\left<\cdot,f_i\right>X_k f_j.
\]
Applying this to the vectors $f_p$, for $p=1,\dots,N$, we obtain
$$\sum_{k=1}^n\sum_{i,j=1}^N B_{pi}^{(k)}A_{ij}^{(k)}f_j=
{\sum_{k=1}^n\sum_{i,j=1}^N A_{ij}^{(k)}\left<X_k f_p,f_i\right>f_j=\sum_{k=1}^n\sum_{i,j=1}^N A_{ij}^{(k)}\left<f_p,f_i\right>X_k f_j} =\sum_{k=1}^n\sum_{j}^N A_{pj}^{(k)}X_k f_j,$$
or, by using $A^{(0)}$,
\begin{equation}\label{eq:commutator-4}
 \sum_{j=1}^N A_{pj}^{(0)}f_j=\sum_{k=1}^n\sum_{j}^N A_{pj}^{(k)}X_k f_j .
\end{equation}
This is true for each
for $p=1,\dots,N$; so we have $A(X)f=0$, as desired.

Moreover, by taking inner products of both sides of \eqref{eq:commutator-4} with vectors $f_q$, $q=1,\dots,N$, we obtain
$$
{\sum_{j=1}^N A_{pj}^{(0)}\left<f_j,f_q\right>=\sum_{k=1}^n\sum_{j}^N A_{pj}^{(k)}\left<X_k f_j,f_q\right>}
$$
or
\begin{equation}\label{eq:constant term} 
A_{pq}^{(0)}=\sum_{k=1}^n\sum_{j}^N A_{pj}^{(k)}B_{jq}^{(k)} 
\end{equation}
for $p,q=1,\dots,N$. That actually says that 
$$\sum_{k=1}^n B^{(k)}A^{(k)}=A^{(0)}=\sum_{k=1}^n A^{(k)}B^{(k)}.$$

Finally, we want to verify the remaining part, i.e., $A(X)^\ast f=0$. For that purpose, we consider
\[
\sum_{k=1}^n X_k^\ast T_k^\ast=\sum_{k=1}^n T_k^\ast X_k^\ast,
\]
which comes from taking conjugation of \eqref{eq:commutator-3}. We replace $T_k^\ast$ in the above equality by
\[
T_k^\ast=\sum_{i,j=1}^N \overline{A_{ij}^{(k)}}\left<\cdot,f_j\right>f_i,
\]
for $k=1,\dots,n$; then we have
\[
\sum_{k=1}^n\sum_{i,j=1}^N\overline{A_{ij}^{(k)}}\left<\cdot,f_j\right>X_k^\ast f_i=\sum_{k=1}^n\sum_{i,j=1}^N\overline{A_{ij}^{(k)}}\left<X_k^\ast\cdot,f_j\right>f_i.
\]
Applying this to $f_l$, for $l=1,\dots,N$, gives
$$
 \sum_{k=1}^n\sum_{i=1}^N\overline{A_{il}^{(k)}}X_k^\ast f_i={\sum_{k=1}^n\sum_{i,j=1}^N\overline{A_{ij}^{(k)}}\left<f_l,f_j\right>X_k^\ast f_i=\sum_{k=1}^n\sum_{i,j=1}^N\overline{A_{ij}^{(k)}}\left<X_k^\ast f_l,f_j\right>f_i}=
\sum_{k=1}^n\sum_{i,j=1}^N\overline{A_{ij}^{(k)}}\overline{B_{jl}^{(k)}}f_i
$$
or
$$ \sum_{k=1}^n\sum_{i=1}^N\overline{A_{il}^{(k)}}X_k^\ast f_i = \sum_{i=1}^N\overline{A_{il}^{(0)}}f_i,
$$
where we have used \eqref{eq:constant term} in the last step. The last equality is exactly $A(X)^\ast f=0$, as desired.
\end{proof}

Now we can give the proof of Theorem \ref{thm:maximality of Delta}.

\begin{proof}[Proof of Theorem \ref{thm:maximality of Delta}]
We prove our theorem by showing (i)$\Longleftrightarrow$(ii) and (ii)$\Longleftrightarrow$(iii).

First, we want to prove (i)$\Longrightarrow$(ii). Let $A$ be a linear matrix in $M_{N}(\mathbb{C}\left\langle x_{1},\dots,x_{n}\right\rangle)$ with vectors $f\in\ker A(X)$, $e\in\ker A(X)^{\ast}$ such that both $e$ and $f$ have $\mathbb{C}-$linear independent components, as given in (ii). Then we need to show that $A=0$.
We use Lemma \ref{lem:finite rank operators} to define finite rank operators $T_k$ ($k=1,\dots,n$) as in \eqref{eq:finite rank operators}. Those operators satisfy then the relation $\sum_{k=1}^{n}[T_{k},X_{k}]=0$. Hence, from our hypothesis (i), i.e., $\Delta(X)=n$, we conclude that $T_{k}=0$ for all $k=1,
\dots,n$. In particular, we have
\[
T_k(e_{p})=\sum_{i,j=1}^{N}A_{ij}^{(k)}\left<e_{p},e_{i}\right>f_{j}=0,\quad \text{for all $p=1,\dots,N$,}
\]
and furthermore,
\[
\left<T_k(e_{p}),f_{q}\right>=\sum_{i,j=1}^{N}\left<e_{p},e_{i}\right>A_{ij}^{(k)}\left<f_{j},f_{q}\right>=0,\quad\text{for all $p,q=1,\dots,N$},
\]
for $k=1,\dots,n$. Denoting by $E:=(\left<e_{p},e_{i}\right>)_{p,i=1}^{N}$ and $F:=(\left<f_{j},f_{q}\right>)_{j,q=1}^{N}$ the Gram matrices of $e$ and $f$, we write the above as
\[
EA^{(k)}F=0,
\]
for $k=1,\dots,n$. Since both $e$ and $f$ consist of $\mathbb{C}-$linear
independent vectors, $E$ and $F$ are invertible. Therefore, $A^{(k)}=0$ for all
$k=1,\dots,n$. This reduces then $A(X)f=0$ to $A^{(0)}f=0$.
However, as $f$ is a $\mathbb{C}-$linear independent family, this implies that $A^{(0)}=0$.
This completes the proof for the part (i)$\Longrightarrow$(ii).

Next, we want to show (ii)$\Longrightarrow$(i). In order to prove $\Delta(X)=n$, let $(T_1,\dots,T_n)$ be a tuple of finite rank operators on $L^2(\M,\tau)$ satisfying $\sum_{k=1}^{n}[T_k,X_k]=0$. Then by Lemma \ref{lem:linear matrix}, the linear matrix $A$ defined as in \eqref{eq:linear matrix} satisfies $A(X)f=0$ and $A(X)^\ast f=0$, where $f$ is an orthonormal family spanning the space of the sum of $\im T_k + \im T_k^ \ast$ over all $k=1,\dots,n$. Hence $A=0$ by our hypothesis (ii), which enforces, immediately from \eqref{eq:finite rank operators-2}, that $T_k=0$ for all $k=1\dots,n$ . Hence, $\Delta(X)=n$.

Now, we want to prove (ii)$\Longrightarrow$(iii). We proceed by induction on the matrix size $N$ which is considered in (iii) of Theorem \ref{thm:maximality of Delta}.
First, we prove by contradiction the result when the dimension $N=1$. Suppose $f\in L^{2}(\mathcal{M},\tau)$
is a nonzero vector such $A(X)f=0$, then there also exists a nonzero
vector $e\in L^{2}(\mathcal{M},\tau)$ such that $A(X)^\ast e=0$ by Lemma \ref{lem:kernels}.
Therefore, $A=0\in\C$ by our hypothesis (ii), which is a contradiction with $A$ is full, i.e., $A\neq0$.

In the following, we assume that the result holds for dimension $N-1$
and we want to prove it for dimension $N$. Let $A$ be a linear full
matrix over $\mathbb{C}\left\langle x_{1},\dots,x_{n}\right\rangle $
such that $A(X)f=0$ for some vector $f\neq0$ in $L^{2}(\mathcal{M},\tau)^{N}$.
Then the components of $f$ have to be $\mathbb{C}-$linear independent by the following reason.
If these components are not $\mathbb{C}-$linear independent, then we can find an invertible matrix $U\in M_{N}(\mathbb{C})$ such that
\[
Uf=\begin{pmatrix}f'\\
0
\end{pmatrix},
\]
where $f'\neq0$ in $L^{2}(\mathcal{M},\tau)^{N-1}$ as $f\neq0$.
Putting $AU^{-1}=\begin{pmatrix}B & b\end{pmatrix}$ in the corresponding
block structure, we have $B(X)f'=0$. We can further choose $N-1$
rows of $B$ to form a full matrix $B'\in M_{N-1}(\mathbb{C}\left\langle x_{1},\dots,x_{n}\right\rangle )$ by Proposition \ref{prop:full minor-one column omitted} since $AU^{-1}$ is a full matrix. 
Hence, we have $B'(X)f'=0$, which yields $f'=0$ by the induction hypothesis. This is a contradiction
with $f'\neq0$ and thus the components of $f$ are $\mathbb{C}-$linear independent.

For $A(X)^\ast$, by Lemma \ref{lem:kernels}, there also exists a vector $e\neq0$ in $L^{2}(\mathcal{M},\tau)^{N}$
such that
$$A(X)^\ast e=A^\ast(X^\ast)e=0.$$
If the components of $e$ are not $\mathbb{C}-$linear independent, then
by a similar argument as the case $A(X)f=0$, we can construct
a linear full matrix $B'\in M_{N-1}(\mathbb{C}\left\langle x_{1},\dots,x_{n}\right\rangle)$ out of
$A^\ast$ (note $A^\ast$ is full as $A$ is full) such that $B'(X^\ast)$ also has a nontrivial kernel. Hence $B'(X^\ast)^\ast=(B')^\ast(X)$ has a nontrivial kernel. However, since $(B')^\ast$ is a linear full matrix of dimension $N-1$, the kernel of
$(B')^\ast(X)$ is trivial by the induction hypothesis. This yields a contradiction and thus we see that $e$ also has $\mathbb{C}-$linear independent components.
Therefore, $A=0$ follows from hypothesis (ii), which is a contradiction with the fullness of $A$.

Finally, we want to show (iii)$\Longrightarrow$(ii). Suppose that $N\geq 1$, $A\in M_{N}(\C\left\langle x_{1}\dots,x_{n}\right\rangle)$ is a linear matrix such that $f\in\ker A(X)$ and $e\in\ker A(X)^{\ast}$ both have $\mathbb{C}-$linear independent components. Our goal is to prove $A=0$, so we assume $\rho(A)>0$ in order to achieve some contradiction. First, we note that $A$ is not full, otherwise $\ker A(X)=\{0\}$ by hypothesis (iii). This is a contradiction since $f\in\ker A(X)$ has $\mathbb{C}-$linear independent components. Therefore, we may additionally assume $\rho(A)<N$. By Theorem \ref{thm:zero block}, there are invertible matrices $U$ and $V$ in $M_{N}(\C)$ such that
\[
UAV=\begin{pmatrix}B & \0\\C_{1} & C_{2}\end{pmatrix}
\]
where $B\in M_{N-S,\rho(A)-S}(\C\left\langle x_{1},\dots,x_{n}\right\rangle )$ has inner rank $\rho(B)=\rho(A)-S$.

If $S<\rho(A)$, i.e., the block $B$ does not disappear in the above form of $UAV$, then let us write
\[
V^{-1}f=\begin{pmatrix}f'\\f''\end{pmatrix},
\]
where $f'\in L^2(\M,\tau)^{\rho(A)-S}$. Clearly we have $B(X)f'=0$ and we consider this equality rather than $A(X)f=0$. By consulting Theorem \ref{thm:full minor}, we can choose $\rho(A)-S$ rows of $B$ to form a linear full matrix $B'\in M_{\rho(A)-S}(\C\left\langle x_{1},\dots,x_{n}\right\rangle)$. It then follows that $B'(X)f'=0$ and thus $f'=0$ by our hypothesis (iii). However, this is impossible since $f$ has $\C$-linear independent components and $V\in M_{N}(\C)$ is invertible.

So it remains to deal with the case that $S=\rho(A)$. In such a case, Theorem \ref{thm:zero block} actually says that
\[
UAV=\begin{pmatrix}\0\\C_{2}\end{pmatrix},
\]
where $C_2\in M_{\rho(A),N}(\C\left\langle x_{1},\dots,x_{n}\right\rangle)$. From $A(X)^\ast e=0$, we see that
\[
(UAV)^\ast(X^\ast)(U^\ast)^{-1}e=V^\ast A^\ast(X^\ast)U^\ast(U^\ast)^{-1}e=V^\ast A(X)^\ast e=0.
\]
That is,
\[
\begin{pmatrix}\0 & C_{2}^\ast(X^\ast)\end{pmatrix}(U^\ast)^{-1}e=0.
\]
Similarly as before, we can build out of $C_2^\ast$ a linear full matrix which has nontrivial kernel. This gives a contradiction and our proof is completed.

\end{proof}

\section{Noncommutative Rational functions and rational closure}\label{sec:rat fcts}

In this section, we will give an introduction to noncommutative rational functions. One crucial fact is that to each rational function we can associate a representation using linear full matrices; this is also known as ``linearization trick''. Another fact on rational functions is that they can be used to diagonalize a matrix over polynomials. These two facts on rational functions then allow, in Section \ref{subsec:free field realization}, to give more equivalent descriptions for tuples of operators with maximal $\Delta$, and thus get the full version of Theorem \ref{thm:intro-main}.
Furthermore, in the last subsection, the rational closure and the division closure are introduced; they will take over the role of rational functions when we consider the Atiyah property in Section \ref{subsec:Atiyah}.

\subsection{Noncommutative rational functions and their linear representations}

It is well-known that for a given integral domain $\A$, one can construct the smallest field containing $\A$, which is called the field of fractions. In particular, if $\A=\C[x_1,\dots,x_n]$, the ring of polynomials in commuting variables, then its field of fractions is given by the field of rational functions.

In the non-commutative situation, though the construction is highly non-trivial, there exists also a skew field of fractions of $\C\left\langle x_{1},\dots,x_{n}\right\rangle $, which is uniquely determined by some universal property; it is denoted by $\C\plangle x_{1},\dots,x_{n}\prangle$, and sometimes it is also simply called the \emph{free (skew) field}. An element in this field is called a \emph{rational function}, as it can be obtained by taking repeatedly sums, products and inverses from polynomials. We won't go further into the details of the construction of $\C\plangle x_{1},\dots,x_{n}\prangle$, but we collect some basic facts in the following theorem. For interested readers, all these statements can be found in \cite[Chapter 7]{Coh06}.

\begin{theorem}\label{thm:free field}
There exists the field of fractions $\C\plangle x_{1},\dots,x_{n}\prangle$ of $\C\left\langle x_{1},\dots,x_{n}\right\rangle $ such that the inner rank of a matrix over $\C\left\langle x_{1},\dots,x_{n}\right\rangle $ stays the same if this matrix is considered as a matrix over $\C\plangle x_{1},\dots,x_{n}\prangle$.

Moreover, any square matrix over $\C\plangle x_{1},\dots,x_{n}\prangle$ is full if and only if it is invertible. In particular, a full matrix over $\C\left\langle x_{1},\dots,x_{n}\right\rangle $ is invertible over $\C\plangle x_{1},\dots,x_{n}\prangle$.
\end{theorem}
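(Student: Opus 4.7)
The plan is essentially to invoke Cohn's theory of universal fields of fractions for semifirs, since all three assertions in the theorem are classical results collected in \cite{Coh06}. I would sketch how the pieces fit together rather than reproduce the full construction of the free field.

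First, I would recall that $R := \C\langle x_1,\dots,x_n\rangle$ is a semifir, which may be established via the standard weak algorithm coming from the word-length filtration. Semifirs are Sylvester domains, providing the natural framework for the rigidity of the inner rank. Applying Cohn's general construction one then obtains a universal skew field of fractions $U := \C\plangle x_1,\dots,x_n\prangle$ together with an embedding $R \hookrightarrow U$ that is \emph{honest}, meaning that every square full matrix over $R$ becomes invertible in $U$; the universal property characterizes $U$ up to isomorphism.

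For the rank-preservation statement, let $A \in M_{m,n}(R)$. The inequality $\rho_U(A) \leq \rho_R(A)$ is immediate, since any factorization of $A$ over $R$ is also one over $U$. For the reverse inequality, I would set $r := \rho_R(A)$ and invoke Theorem \ref{thm:full minor} (applicable because the full square matrices over a Sylvester domain are closed under products and diagonal sums) to extract an $r \times r$ submatrix $B$ of $A$ that is full over $R$. By honesty, $B$ becomes invertible over $U$, so its inner rank over $U$ equals $r$; since any $U$-factorization of $A$ restricts to one of $B$, this forces $\rho_U(A) \geq r$.

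The equivalence of fullness and invertibility for square matrices over the skew field $U$ reduces to standard linear algebra over division rings: non-invertibility of $A \in M_N(U)$ is equivalent to $U$-linear dependence among its columns, which is in turn equivalent to the existence of a factorization $A = PQ$ through $U^r$ for some $r < N$ — exactly the failure of fullness. The final clause of the theorem then combines this with rank preservation: a full matrix over $R$ remains full over $U$, and is therefore invertible there. The one genuinely hard ingredient in this plan is the construction of $U$, which requires the substantial machinery of prime matrix ideals and their localization at the class of full matrices; since this is precisely the content of \cite[Chapter 7]{Coh06}, I would simply cite it rather than reproduce the argument.
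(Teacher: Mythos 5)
Your proposal is correct and takes essentially the same approach as the paper, which simply cites \cite[Chapter 7]{Coh06} for all three assertions without reproducing any argument. You fill in a reasonable sketch of how the classical pieces fit together (semifir $\Rightarrow$ Sylvester domain $\Rightarrow$ universal field of fractions with honest embedding, rank preservation via the full-submatrix argument of Theorem \ref{thm:full minor}, and fullness $=$ invertibility over a skew field by Gaussian elimination), which is faithful to Cohn's theory and logically sound, including the observation that a $U$-factorization of $A$ restricts to a factorization of the chosen $r\times r$ full submatrix and thereby forces $\rho_U(A)\geq r$.
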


Therefore, in the following, for a matrix $A$ over $\C\left\langle x_{1},\dots,x_{n}\right\rangle $, we do not need to distinguish between its inner rank over polynomials and its inner rank over rational functions; this common inner rank is denoted by $\rho\left(A\right)$.

Recall that in Section \ref{sec:inner rank}, we introduced the inner rank as an analogue of the rank for scalar-valued matrices; now with the existence of the free field, we have the equivalence between fullness and non-singularity, which stresses even more the analogy to the case of scalar-valued matrices.

Therefore, for a given full matrix $A$ over $\C\left\langle x_{1},\dots,x_{n}\right\rangle $, each entry in $A^{-1}$ is clearly a rational function. Actually, for any row vector $u$ and any column vector $v$ over $\C$, $uA^{-1}v$ is also a rational function because it is just a linear combination of some rational functions. What might not be easy to see is that the converse also holds, namely, any rational function $r$ can be written in the form $r=uA^{-1}v$, for some full matrix $A$ over polynomials and two scalar-valued vectors $u$ and $v$; of course, the size of this matrix depends on the considered rational function. Moreover, this matrix $A$ can be chosen to be linear, though the size of $A$ may increase for exchange. This culminates in the following definition from \cite{CR99}.

\begin{definition}\label{def:linear representation}
Let $r$ be a rational function. A \emph{linear representation of} $r$ is a tuple $\rho=(u,A,v)$ consisting of a linear full matrix $A\in M_{k}(\C\left\langle x_{1},\dots,x_{n}\right\rangle )$ (for some $k\in\N$), a row vector $u\in M_{1,k}(\C)$ and a column vector $v\in M_{k,1}(\C)$ such that $r=uA^{-1}v$.
\end{definition}

In \cite{CR99}, such linear representations were used to give an alternative construction of the free field. That indeed each element in the free field admits a linear representation is a fundamental result, which is a direct consequence of the approach of \cite{CR99}, but follows also from the general theory presented in \cite{Coh06}; see also \cite{Vol18}.

\begin{theorem}\label{thm:linear representation}
Each rational function $r \in \C\plangle x_1,\dots,x_n\prangle$ admits a linear representation in the sense of Definition \ref{def:linear representation}.
\end{theorem}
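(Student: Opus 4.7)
The plan is to proceed by structural induction on the construction of rational functions as elements of the free field. I would show that the class $\mathcal{C}$ of rational functions admitting a linear representation in the sense of Definition \ref{def:linear representation} contains all polynomials and is closed under the three defining operations of sums, products, and inversion of nonzero elements. Since $\C\plangle x_1,\dots,x_n\prangle$ is generated from $\C\langle x_1,\dots,x_n\rangle$ by these operations, this will suffice.

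For the base cases, constants and variables admit trivial linear representations; for instance, $x_i = (1,0)\bigl(\begin{smallmatrix}1 & -x_i\\ 0 & 1\end{smallmatrix}\bigr)^{-1}(0,1)^T$, where the matrix is unipotent over $\C$ and hence full. For closure under sums and products, given representations $r_j = u_j A_j^{-1} v_j$ ($j=1,2$) I would use the block constructions
$$r_1+r_2=(u_1,u_2)\begin{pmatrix}A_1 & 0\\ 0 & A_2\end{pmatrix}^{-1}\begin{pmatrix}v_1\\ v_2\end{pmatrix}, \qquad r_1 r_2 = (u_1,0)\begin{pmatrix}A_1 & -v_1 u_2\\ 0 & A_2\end{pmatrix}^{-1}\begin{pmatrix}0\\ v_2\end{pmatrix},$$
which remain linear and whose matrices are a diagonal sum, respectively block-triangular with full diagonal blocks, hence full over the Sylvester domain $\C\langle x_1,\dots,x_n\rangle$. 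All polynomials are then covered by iterating these constructions.

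For closure under inversion, given a nonzero $r = u A^{-1} v$, I would consider the augmented linear matrix
$$B := \begin{pmatrix}A & v\\ u & 0\end{pmatrix}.$$
Viewed over the free field, its Schur complement with respect to the upper-left block is $-r \neq 0$, so Lemma \ref{lem:Schur complement} yields that $B$ is invertible over $\C\plangle x_1,\dots,x_n\prangle$, and hence full over $\C\langle x_1,\dots,x_n\rangle$ by Theorem \ref{thm:free field}. The standard block-inverse formula then identifies the bottom-right entry of $B^{-1}$ as $-r^{-1}$, so that $r^{-1} = -\tilde u B^{-1} \tilde v$ with $\tilde u = (0,\dots,0,1)$ and $\tilde v = (0,\dots,0,1)^{T}$ provides the desired linear representation of $r^{-1}$.

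The main obstacle I anticipate is the fullness bookkeeping at every inductive step: writing down the correct block matrices is routine, but justifying fullness demands care. For sums and products this reduces to general facts about Sylvester domains, whereas the inversion step crucially depends on the hypothesis $r\neq 0$ together with the equivalence between fullness and invertibility over the free field provided by Theorem \ref{thm:free field}. Once these verifications are in place, an elementary induction on the construction of the rational expression delivers the theorem.
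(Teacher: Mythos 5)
Your proof is correct, and it gives a self-contained argument for a statement that the paper itself does not prove but only attributes to Cohn--Reutenauer \cite{CR99}, Cohn \cite{Coh06}, and Vol\v{c}i\v{c} \cite{Vol18}. Your construction is in fact the standard one found in those sources and in the ``linearization trick'' literature: the block-diagonal matrix for sums, the block upper-triangular matrix with constant off-diagonal coupling $-v_1u_2$ for products, and the bordered matrix $\bigl(\begin{smallmatrix}A & v\\ u & 0\end{smallmatrix}\bigr)$ for inversion. The fullness verifications are right: for sums and products one can either invoke the Sylvester-domain closure properties or, more directly, observe that the block matrix is invertible over $\C\plangle x_1,\dots,x_n\prangle$ (both diagonal blocks being invertible there) and then appeal to Theorem \ref{thm:free field}; for inversion, the Schur complement is $-r\neq 0$, so the bordered matrix is invertible over the free field and hence full over $\C\langle x_1,\dots,x_n\rangle$. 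The resulting sign in $r^{-1}=-\tilde u B^{-1}\tilde v$ is harmlessly absorbed into the scalar row vector.

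One small point of rigor worth flagging: strictly speaking the induction should run over rational \emph{expressions} rather than over rational \emph{functions}, since an element of the free field has no canonical ``construction'' but is the evaluation of some (typically many) formal expressions in which every intermediate inverse is taken at a nonzero element. The argument you give shows by induction that every such expression yields a linear representation of its value; combined with the fact --- built into the description of $\C\plangle x_1,\dots,x_n\prangle$ adopted in this paper, namely that every element ``can be obtained by taking repeatedly sums, products and inverses from polynomials'' --- this gives the theorem. Making this distinction explicit would tighten the write-up, but the mathematical content is sound.
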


The idea of realizing noncommutative rational functions by inverses of linear matrices has been known for more than fifty years; and was rediscovered several times in many different branches of mathematics as well as computer science and engineering.

For the special case of noncommutative polynomials it was introduced, under the name ``linearization trick'', to the community of free probability by the work of Haagerup and Thorbj\o rnsen \cite{HT05} and Haagerup, Schultz, and Thorbj\o rnsen \cite{HST06}, building on earlier operator space versions; for the latter see in particular the work of Pisier \cite{Pis18}. Similar concepts were developed by Anderson \cite{And12,And13} and were used in \cite{BMS17} in order to study evaluations of noncommutative polynomials in noncommutative random variables by means of operator-valued free probability theory. Later, in \cite{HMS18}, these methods were generalized also to noncommutative rational expressions.

\subsection{Evaluation of rational functions}\label{sec:evaluation}

Let $X=(X_{1},\dots,X_{n})$ be a tuple of elements in a unital algebra $\A$, then its evaluation map $\ev_{X}$ from $\C\left\langle x_{1},\dots,x_{n}\right\rangle $ to $\A$ is well-defined as a homomorphism. The question we want to address in this section is how can we define the evaluation for rational functions. Unfortunately, the evaluation cannot be well-defined for all algebras without additional assumptions. Here is an example which illustrates the problem: considering $\A=B\left(H\right)$ for some infinite dimensional separable Hilbert space, let $l$ denote the one-sided left-shift operator for some basis of $H$, indexed by natural numbers; then $l^{\ast}$ is the right-shift operator and we have $l\cdot l^{\ast}=1$ but $l^{\ast}\cdot l\neq1$; so it is clear that the evaluation of $y\left(xy\right)^{-1}x$ at $(l,l^{\ast})$ is $l^{\ast}(l\cdot l^{\ast})l=l^{\ast}l\neq1$; however, since we have $y(xy)^{-1}x=1$ as rational function, there is no consistent way to define the evaluation of this function for the arguments $l$ and $l^*$. So it's natural to consider algebras in which a left inverse is also a right inverse to avoid such a problem; actually, we require algebras to be stably finite (see Definition \ref{def:stably finite}) in order to make sure that we have a well-defined evaluation.

\begin{theorem}
\label{thm:evaluation}Let $\A$ be a stably finite algebra, then for any rational function $r$ in the free field $\C\plangle x_{1},\dots,x_{n}\prangle$, we have a well-defined $\A$-domain $\dom_{\A}(r)\subseteq\A^{n}$ and an evaluation $r(X)$ for any $X\in\dom_{\A}(r)$.
\end{theorem}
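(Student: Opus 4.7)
The plan is to exploit the linear representation theorem (Theorem~\ref{thm:linear representation}) and reduce everything to inversion of linear full matrices. Given $r\in\C\plangle x_1,\dots,x_n\prangle$, fix a linear representation $\rho=(u,A,v)$ of $r$ with $A\in M_k(\C\left\langle x_1,\dots,x_n\right\rangle)$ linear and full, so that $r = u A^{-1} v$ in the free field. Tentatively I set
$$\dom^\rho_\A(r) := \{X\in\A^n : A(X)\ \text{is invertible in}\ M_k(\A)\},\qquad r^\rho(X) := u A(X)^{-1} v,$$
take $\dom_\A(r)$ to be the union of the $\dom^\rho_\A(r)$ over all linear representations $\rho$ of $r$, and put $r(X) := r^\rho(X)$ whenever $X\in\dom^\rho_\A(r)$. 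Here stable finiteness enters immediately on the qualitative level: it guarantees that $M_k(\A)$ is stably finite too, so ``$A(X)$ is invertible'' is unambiguous (any one-sided inverse is two-sided), and the shift example recalled before the theorem shows this hypothesis cannot be dropped.

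The essential step is independence of $\rho$. I would use the notion of a \emph{minimal} linear representation of $r$, that is, one with the smallest possible $k$. Following Cohn-Reutenauer \cite{CR99}, any two minimal representations $\rho_1=(u_1,A_1,v_1)$ and $\rho_2=(u_2,A_2,v_2)$ of the same $r$ are conjugate under $\mathrm{GL}_k(\C)\times\mathrm{GL}_k(\C)$: there exist invertible scalar matrices $P,Q$ with $A_2 = P A_1 Q$, $u_2 = u_1 Q$, and $v_2 = P v_1$. A direct computation then yields
$$u_2 A_2(X)^{-1} v_2 \;=\; u_1 Q \,(P A_1(X) Q)^{-1} P v_1 \;=\; u_1 A_1(X)^{-1} v_1,$$
and since $P,Q$ are scalar-invertible, $A_1(X)$ is invertible in $M_k(\A)$ iff $A_2(X)$ is. Hence $\dom^{\rho_1}_\A(r) = \dom^{\rho_2}_\A(r)$ and the evaluations coincide on this common domain.

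For a non-minimal representation I would reduce to a minimal one by a finite sequence of elementary steps, each consisting of a row/column operation over $\C$ or a Schur complement reduction governed by Lemma~\ref{lem:Schur complement}. Each such step preserves invertibility of $A(X)$ in $M_k(\A)$ as well as the value $u A(X)^{-1} v$; this is where stable finiteness is genuinely used, since Lemma~\ref{lem:Schur complement} and Proposition~\ref{prop:invertible minor} rely on it to keep the bookkeeping of inverses consistent. Finally, compatibility with the polynomial evaluation $\ev_X$ follows by checking a standard Higman-type linearization of $p\in\C\left\langle x_1,\dots,x_n\right\rangle$: for such a representation $A(X)$ is invertible for every $X\in\A^n$ (its principal block is unitriangular over $\C$) and $u A(X)^{-1} v$ recovers $p(X)$ by direct expansion.

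The main obstacle is the reduction-to-minimal step together with the careful verification that $u A(X)^{-1} v$ is invariant under each elementary move. The representation-theoretic equivalence of minimal representations is a formal fact from the literature, but transporting it to the evaluation $r^\rho(X)$ requires that the identities $A_2 = P A_1 Q$ etc., which hold over $\C\left\langle x_1,\dots,x_n\right\rangle$, remain valid after applying $\ev_X$ and after inverting in $M_k(\A)$; this is routine once stable finiteness is in place, and it is precisely this hypothesis that rules out the pathologies illustrated by the unilateral shift.
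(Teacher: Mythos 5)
Your set-up (defining $\dom_\A(\rho)$, $\dom_\A(r)$ as the union over all representations, and $r^\rho(X)=uA(X)^{-1}v$) matches the paper's Definition~\ref{def:evaluation} exactly, and the observation that stable finiteness of $\A$ makes ``$A(X)$ invertible'' unambiguous is correct. However, the way you establish independence of the representation is genuinely different from the paper's argument, and this is where a real gap appears.

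The paper does not pass through minimality at all. It exploits the observation that two representations $\rho_1=(u_1,A_1,v_1)$ and $\rho_2=(u_2,A_2,v_2)$ of the same $r$ can be assembled into a single linear representation of $0$, namely $\bigl((u_1\ u_2),\operatorname{diag}(A_1,-A_2),(v_1^\top\ v_2^\top)^\top\bigr)$, whose $\A$-domain is $\dom_\A(\rho_1)\cap\dom_\A(\rho_2)$. It then shows directly, via Proposition~\ref{prop:invertible minor} applied once inside $\A$ and once inside the free field, that a linear representation $(u,A,v)$ of $0$ must satisfy $uA(X)^{-1}v=0$ on its domain: the bordered matrix $\begin{pmatrix}0&u\\v&A\end{pmatrix}$ has inner rank $k$ over $\C\langle x\rangle$ (since $uA^{-1}v=0$ in the free field), hence a rank factorization over polynomials that specializes to $\A$; but if $uA(X)^{-1}v\ne 0$, the same bordered matrix over $\A$ would have inner rank $k+1$, a contradiction. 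This is a clean, self-contained rank argument.

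Your route via minimal linear representations and the Cohn--Reutenauer conjugacy $(A_2,u_2,v_2)=(PA_1Q,\,u_1Q,\,Pv_1)$ is attractive, and the minimal case you work out is correct. The gap is in the reduction of an arbitrary linear representation to a minimal one. You describe the reduction as ``a finite sequence of elementary steps, each consisting of a row/column operation over $\C$ or a Schur complement reduction governed by Lemma~\ref{lem:Schur complement}''. But a Schur complement of a linear pencil is in general not linear: if $A=\begin{pmatrix}B_{11}&B_{12}\\B_{21}&B_{22}\end{pmatrix}$ with $B_{11}$ constant invertible, the complement $B_{22}-B_{21}B_{11}^{-1}B_{12}$ has degree up to~$2$ unless at least one of $B_{12}$, $B_{21}$ is constant. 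So the proposed moves leave the class of linear representations, and you would need to replace them by the actual trimming procedure for linear pencils (which involves more delicate structural choices, not arbitrary Schur complements). Moreover, you would then still have to prove that each genuine trimming step preserves both the value $uA(X)^{-1}v$ and membership of $X$ in the domain -- claims you assert but do not verify. These are precisely the bookkeeping issues that the paper's direct rank-comparison sidesteps entirely; if you want to pursue the minimality route, you would need to spell out the reduction and cite the CR99 uniqueness theorem in its precise form for the class of representations you use.
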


Actually, the converse also holds in some sense, see Theorem 7.8.3 in the book \cite{Coh06} (we should warn the reader that the terminology used there is quite different from ours). See also \cite[Theorem 6.1]{HMS18}, where rational functions are treated as equivalence classes of formal rational expressions evaluated on matrices of all sizes. For the reader's convenience, here we give a proof of Theorem \ref{thm:evaluation} by using linear representations.

\begin{definition}
\label{def:evaluation}For a linear representation $\rho=(u,A,v)$, we define its $\A$\emph{-domain}
\[
\dom_{\A}(\rho):=\{X\in\A^{n}\bigm|A(X)\text{ is invertible as a matrix over }\A\};
\]
and for a given rational function $r$, we define its $\A$\emph{-domain}
\[
\dom_{\A}(r):=\bigcup_{\rho}\dom_{\A}(\rho),
\]
where the union is taken over all possible linear representations of $r$. Then we define the \emph{evaluation} of $r$ at a tuple $X\in\dom_{\A}(r)$ by
\[\Ev_{X}(r):=uA(X)^{-1}v\]
for any linear representation $\rho=(u,A,v)$ satisfying $X\in\dom_{\A}(\rho)$. We also denotes this evaluation by $r(X)$.
\end{definition}

Of course, as the choice of the linear representations for a rational function is not unique, we have to prove that different choices always give the same evaluation.

\begin{proof}[Proof of Theorem \ref{thm:evaluation}]
Let $\rho_{1}=(u_{1},A_{1},v_{1})$ and $\rho_{2}=(u_{2},A_{2},v_{2})$ be two linear representations of a rational function $r$, so that
\[r=u_{1}A_{1}^{-1}v_{1}=u_{2}A_{2}^{-1}v_{2}\]
in $\C\plangle x_{1},\dots,x_{n}\prangle$. We need to prove that for any $X\in\dom_{\A}(\rho_{1})\cap\dom_{\A}(\rho_{2})$, we have $u_{1}A_{1}(X)^{-1}v_{1}=u_{2}A_{2}(X)^{-1}v_{2}$. It is not difficult to verify that the tuple
\[\left(\begin{pmatrix}u_{1} & u_{2}\end{pmatrix},\begin{pmatrix}A_{1} & \0\\\0 & -A_{2}\end{pmatrix},\begin{pmatrix}v_{1}\\v_{2}\end{pmatrix}\right)\]
is a linear representation of zero in the free field with $\A$-domain equal to $\dom_{\A}(\rho_{1})\cap\dom_{\A}(\rho_{2})$; hence it suffices to prove that, for any linear representation $\rho=(u,A,v)$ of zero, we have $uA(X)^{-1}v=0$ for any $X\in\dom_{\A}(A)$.

Now suppose that $uA(X)^{-1}v\neq0$ for some linear representation $\rho=(u,A,v)$ of the zero function; then
\[\begin{pmatrix}0 & u\\v & A(X)\end{pmatrix}\in M_{k+1}(\A)\]
has inner rank $k+1$ over $\A$ by Proposition \ref{prop:invertible minor} since $\A$ is stably finite. On the other hand, since the free field $\C\plangle x_{1},\dots,x_{n}\prangle$ is stably finite, we can apply again Proposition \ref{prop:invertible minor} to show that
\[L=\begin{pmatrix}0 & u\\v & A\end{pmatrix}\in M_{k+1}(\C\left\langle x_{1},\dots,x_{n}\right\rangle )\]
has inner rank $k$ as $uA^{-1}v=0$ over $\C\plangle x_{1},\dots,x_{n}\prangle$; and thus $L$ has a rank factorization over $\C\left\langle x_{1},\dots,x_{n}\right\rangle $, which leads to the same factorization over $\A$ for $L(X)$ because the evaluation of polynomials is always well-defined as a homomorphism. So we can conclude that the inner rank of $L(X)$ over $\A$ is at most $k$, which is a contradiction.
\end{proof}

We close this subsection by remarking that this definition of evaluation is consistent with the usual notion of evaluation. That is, given any polynomial $p$, in order to see that the above definition coincides with the usual one, we should find a linear representation $\rho=(u,A,v)$ such that $uA(X)^{-1}v$ equals $p(X)$, the usual evaluation of $p$ at $X$, for any $X\in\A^{n}$. Such a linear representation can actually be constructed by following the rules in some algorithm of constructing linear representations; see, for example, \cite[Algorithm 5.3]{HMS18}. From there one can also see that the arithmetic operations between rational functions in the free field clearly give the corresponding arithmetic operations between their evaluations by linear representations.

\subsection{\label{subsec:rational closure}Rational closure and division closure}

In this subsection, we introduce some other constructions which are closely related to rational functions. In particular, the notion of a rational closure is needed to consider the strong Atiyah property in the next section.

\begin{definition}\label{def:rational closure}
Let $\phi:\C\left\langle x_{1},\dots,x_{n}\right\rangle \rightarrow\A$ be a homomorphism into a unital complex algebra $\A$ and let us denote by $\Sigma_{\phi}$ the set of all matrices whose images are invertible under the matricial amplifications of $\phi$, i.e.,
\[\Sigma_{\phi}=\bigcup_{k=1}^{\infty}\{A\in M_{k}(\C\left\langle x_{1},\dots,x_{n}\right\rangle )\bigm|\phi^{(k)}(A)\text{ is invertible in }M_{k}(\A)\},\]
where $\phi^{(k)}$ is given by applying $\phi$ entrywisely to any matrix in $M_{k}(\C\left\langle x_{1},\dots,x_{n}\right\rangle)$.
The \emph{rational closure of} $\C\left\langle x_{1},\dots,x_{n}\right\rangle $ with respect to $\phi$, denoted by $\mathcal{R}_{\phi}$, is the set of all entries of inverses of matrices in the image of $\Sigma_{\phi}$.
\end{definition}

We actually will only consider, in the next section, the case when $\phi$ is given by the evaluation of some tuple of elements in $\A$. Nonetheless, in this subsection, we introduce some basic facts about this construction in general.

\begin{lemma}
(See \cite[Proposition 7.1.1 and Theorem 7.1.2]{Coh06}) The rational closure $\mathcal{R}_{\phi}$ for any given homomorphism $\phi$ is a subalgebra of $\A$ containing the image of $\C\left\langle x_{1},\dots,x_{n}\right\rangle $.
\end{lemma}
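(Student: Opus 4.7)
The plan is to verify three things in turn: that $\mathcal{R}_\phi$ contains $\phi(\C\langle x_1,\dots,x_n\rangle)$ (in particular, the unit $1$), that it is closed under addition, and that it is closed under multiplication. The technical backbone will be the following reformulation of the definition of $\mathcal{R}_\phi$, which I would establish first as a self-contained claim: for every $k \in \N$, every $A \in \Sigma_\phi \cap M_k(\C\langle x_1,\dots,x_n\rangle)$, and every pair of scalar vectors $u \in M_{1,k}(\C)$, $v \in M_{k,1}(\C)$, the element $u\,\phi^{(k)}(A)^{-1} v$ belongs to $\mathcal{R}_\phi$. The converse -- that every element of $\mathcal{R}_\phi$ arises this way -- is immediate from the definition by taking $u, v$ to be standard basis vectors.

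To establish the reformulation, I would introduce the block upper triangular matrix
\[
A' := \begin{pmatrix} 1 & -u & 0 \\ 0 & A & v \\ 0 & 0 & 1 \end{pmatrix} \in M_{k+2}(\C\langle x_1,\dots,x_n\rangle).
\]
Since $\phi^{(k+2)}(A')$ is block upper triangular with invertible diagonal blocks, $A' \in \Sigma_\phi$; a direct comparison of blocks shows that the $(1, k+2)$-entry of $\phi^{(k+2)}(A')^{-1}$ equals $-u\,\phi^{(k)}(A)^{-1} v$. Replacing $u$ by $-u$ then places $u\,\phi^{(k)}(A)^{-1} v$ in $\mathcal{R}_\phi$, as desired. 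The inclusion $\phi(\C\langle x_1,\dots,x_n\rangle) \subset \mathcal{R}_\phi$ now follows by applying the same trick to
\[
A_q := \begin{pmatrix} 1 & -q \\ 0 & 1 \end{pmatrix}
\]
for any $q \in \C\langle x_1,\dots,x_n\rangle$: the image is invertible, and $\phi(q)$ is the $(1,2)$-entry of its inverse. In particular, $1 \in \mathcal{R}_\phi$.

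Closure under the ring operations is then a bookkeeping exercise. Given $a_i = u_i\,\phi^{(k_i)}(A_i)^{-1} v_i \in \mathcal{R}_\phi$ for $i = 1, 2$, the block diagonal $A_1 \oplus A_2$ lies in $\Sigma_\phi$ and satisfies $\phi^{(k_1+k_2)}(A_1 \oplus A_2)^{-1} = \phi^{(k_1)}(A_1)^{-1} \oplus \phi^{(k_2)}(A_2)^{-1}$, so
\[
a_1 + a_2 = \begin{pmatrix} u_1 & u_2 \end{pmatrix} \phi^{(k_1+k_2)}(A_1 \oplus A_2)^{-1} \begin{pmatrix} v_1 \\ v_2 \end{pmatrix}
\]
has the required form. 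For the product, the block upper triangular matrix
\[
C := \begin{pmatrix} A_1 & -v_1 u_2 \\ 0 & A_2 \end{pmatrix} \in M_{k_1+k_2}(\C\langle x_1,\dots,x_n\rangle),
\]
whose off-diagonal block $-v_1 u_2 \in M_{k_1,k_2}(\C)$ is scalar (hence polynomial-valued), has invertible image with top-right block $\phi^{(k_1)}(A_1)^{-1} v_1 u_2 \phi^{(k_2)}(A_2)^{-1}$, so that
\[
a_1 a_2 = \begin{pmatrix} u_1 & 0 \end{pmatrix} \phi^{(k_1+k_2)}(C)^{-1} \begin{pmatrix} 0 \\ v_2 \end{pmatrix}
\]
again lies in $\mathcal{R}_\phi$ by the reformulation.

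I do not anticipate any serious obstacle: the argument is driven entirely by the multiplicativity of the amplifications $\phi^{(k)}$ together with the block-triangular inverse formulas that already underpin Lemma \ref{lem:Schur complement}. The only minor point requiring care is to keep the gluing data ($u$, $v$, $v_1 u_2$, and the bordering constants) scalar-valued at each step, so that the auxiliary matrices remain in $M_\bullet(\C\langle x_1,\dots,x_n\rangle)$ and the whole construction genuinely stays inside $\Sigma_\phi$.
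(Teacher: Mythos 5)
Your proof is correct, and it reproduces the standard bordering argument that underlies the cited statements in Cohn's book (the paper itself only gives the reference, not a proof). The key reformulation -- that $\mathcal{R}_\phi$ is exactly the set of elements $u\,\phi^{(k)}(A)^{-1}v$ with $A\in\Sigma_\phi$ and scalar row/column vectors $u,v$ -- together with the three block-matrix constructions for bordering, direct sum, and the off-diagonal coupling $-v_1u_2$, is precisely how closure under the algebra operations and the containment of $\phi(\C\langle x_1,\dots,x_n\rangle)$ are established there. One tiny bookkeeping remark: closure under scalar multiplication should also be noted (e.g.\ by absorbing $\lambda\in\C$ into $v$), but this is immediate and does not affect the argument.
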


Unlike the rational functions, the rational closure does not need to be a division ring in general. But it has a nice property concerning inverses: if an element $r\in\mathcal{R}_{\phi}$ is invertible in $\A$, then $r^{-1}\in\mathcal{R}_{\phi}$. In other words, the rational closure is closed under taking inverses in $\A$. A closely related notion is the smallest subalgebra that has this property.

\begin{definition}
\label{def:division closure}Let $\B$ be a subalgebra of $\A$. The \emph{division closure of $\B$} in $\A$, denoted by $\C\plangle\B\prangle$,  is the smallest subalgebra of $\A$ containing $\B$ which is closed under taking inverses in $\A$, i.e., if $d\in\D$ is invertible in $\A$, then $d^{-1}\in\D$. In particular, if $\B=\C\left<X_1,\cdots,X_n\right>$, i.e., $\B$ is generated by some elements $X_1,\cdots,X_n$ in $\A$, then we denote the division algebra of $\B$ by $\C\plangle X_1,\cdots,X_n\prangle$.
\end{definition}

From the definition it follows that the rational closure $\mathcal{R}_{\phi}$ for any homomorphism $\phi$ from $\mathbb{C}\left\langle x_{1},\dots,x_{n}\right\rangle$ to a unital algebra $\A$ always contains the division closure $\D$ of $\phi(\mathbb{C}\left\langle x_{1},\dots,x_{n}\right\rangle)$. However, in some nice situation, these two algebras coincide with each other.

\begin{proposition}
\label{prop:division closure}Let $\phi$ be a homomorphism from $\mathbb{C}\left\langle x_{1},\dots,x_{n}\right\rangle$ to a unital algebra $\A$ and $\D$ is the division closure of $\phi(\mathbb{C}\left\langle x_{1},\dots,x_{n}\right\rangle)$. If one of the rational closure $\mathcal{R}_{\phi}$ or the division closure $\D$ is a division ring, then $\mathcal{R}_{\phi}=\D$.
\end{proposition}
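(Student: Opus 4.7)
The plan is to exploit the containment $\D \subseteq \mathcal{R}_{\phi}$ (which always holds) and to reduce the seemingly symmetric hypothesis to a single case, namely that $\D$ itself is a division ring. The key observation for the reduction is that $\D$ is closed under inverses in $\A$ by definition: if $\mathcal{R}_{\phi}$ is a division ring, then for any nonzero $a\in\D\subseteq\mathcal{R}_{\phi}$ the inverse $a^{-1}$ exists in $\mathcal{R}_{\phi}\subseteq\A$, and hence $a^{-1}\in\D$ by the defining property of the division closure. Thus $\D$ is automatically a division ring whenever $\mathcal{R}_{\phi}$ is, and both cases collapse to one.

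Assuming now that $\D$ is a division ring, I would show $\mathcal{R}_{\phi}\subseteq\D$ directly from the definition. Pick $r\in\mathcal{R}_{\phi}$; then $r$ is an entry of $\phi^{(k)}(A)^{-1}$ for some $A\in M_k(\C\langle x_1,\dots,x_n\rangle)$ with $B:=\phi^{(k)}(A)$ invertible in $M_k(\A)$. Since the entries of $B$ lie in $\phi(\C\langle x_1,\dots,x_n\rangle)\subseteq\D$, it suffices to check that any matrix $M\in M_k(\D)$ which is invertible in $M_k(\A)$ is already invertible in $M_k(\D)$, with the same inverse.

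For this key step I would use that $\D^k$ is an honest $k$-dimensional (left) vector space over the division ring $\D$, so any $\D$-linear endomorphism of $\D^k$ is bijective as soon as it is injective. If $M$ were not invertible in $M_k(\D)$, injectivity would fail, producing a nonzero $v\in\D^k\subseteq\A^k$ with $Mv=0$; this contradicts invertibility of $M$ in $M_k(\A)$. Hence $M$ is invertible in $M_k(\D)$, and by uniqueness of inverses in $M_k(\A)$ the two inverses agree. In particular every entry of $B^{-1}$, and thus $r$, lies in $\D$.

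I do not anticipate a serious obstacle: the whole argument rests on the elementary fact that finite-dimensional vector spaces over a skew field behave as in the commutative case with respect to injectivity versus bijectivity, together with the definitional closure properties of $\D$ and $\mathcal{R}_{\phi}$. The only point requiring care is to phrase the reduction step cleanly, since the two halves of the hypothesis ("$\mathcal{R}_{\phi}$ is a division ring" and "$\D$ is a division ring") are not logically symmetric — the asymmetry is exactly bridged by the fact that $\D$, not $\mathcal{R}_{\phi}$, is defined to be closed under taking inverses in $\A$.
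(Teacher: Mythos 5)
Your proposal is correct, and the skeleton is exactly the paper's: reduce to the case where $\D$ is a division ring using $\D\subseteq\mathcal{R}_{\phi}$ together with the inverse-closedness of $\D$, note that $\D\subseteq\mathcal{R}_{\phi}$ always holds, and then show the converse inclusion by proving the key lemma that every matrix in $M_k(\D)$ which is invertible in $M_k(\A)$ is already invertible in $M_k(\D)$ (with the same inverse, by uniqueness of inverses in a ring). Where you diverge is in the proof of that key lemma: the paper does not prove it but points to an inductive argument of Gaussian-elimination / Schur-complement type from another reference, whereas you give a short self-contained proof by treating $\D^k$ as a finite-dimensional vector space over the division ring $\D$ and invoking the rank-nullity fact that an injective $\D$-linear endomorphism of $\D^k$ is bijective, hence invertible in $\operatorname{End}_{\D}(\D^k)\cong M_k(\D)$. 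Your route is conceptually cleaner and avoids the external citation; both are standard and correct. One small imprecision worth flagging: if $M$ acts by left multiplication on column vectors $v\in\D^k$, then $v\mapsto Mv$ is \emph{right} $\D$-linear, so $\D^k$ should be regarded as a right $\D$-vector space (or one works with row vectors and $v\mapsto vM$ for a left module); this convention slip does not affect the argument, since the rank-nullity theorem and the identification of the endomorphism ring with $M_k(\D)$ hold on either side.
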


\begin{proof}
If $\mathcal{R}_{\phi}$ is a division ring, then $\D$ is also a division ring since $\D\subseteq\mathcal{R}_{\phi}$ and $\D$ is closed under taking inverses. Therefore, it suffices to show that if $\D$ is a division ring, then $\mathcal{R}_{\phi}=\D$. Hence, by the definition of rational closure, we need to show that for any matrix $A$ over $\mathbb{C}\left\langle x_{1},\dots,x_{n}\right\rangle$ whose image $\phi(A)$ is invertible over $\A$, $\phi(A)^{-1}$ is actually a matrix over $\D$. For that purpose, we want to prove for any $n\in\N$, the algebra $M_n(\D)$ is closed under taking inverses in $M_n(\A)$. Therefore, the proof will be completed by the following lemma.
\end{proof}

\begin{lemma}
If a division closure $\D$ is a division ring, then for any $n\in\N$, $M_n(\D)$ is closed under taking inverses.
\end{lemma}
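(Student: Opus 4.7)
The plan is to establish the following stronger statement: any $A \in M_n(\D)$ which is invertible in $M_n(\A)$ is already invertible in $M_n(\D)$, so that by uniqueness of inverses its $M_n(\A)$-inverse automatically lies in $M_n(\D)$. For this I would rely on the fact that, because $\D$ is a division ring, linear algebra over $\D$ behaves exactly as in the commutative case: the inner rank of a matrix over $\D$ equals the row/column rank, free right $\D$-modules satisfy rank-nullity, and a square matrix over $\D$ is invertible precisely when it is full over $\D$. The task thus reduces to showing: if $A \in M_n(\D)$ is not full over $\D$, then $A$ cannot be invertible in $M_n(\A)$.

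So suppose $\rho_\D(A) = r < n$ and pick a rank factorization $A = P Q$ with $P \in M_{n,r}(\D)$ and $Q \in M_{r,n}(\D)$. Viewing right multiplication by $P$ as a morphism of free right $\D$-modules $\D^{1\times n} \to \D^{1\times r}$, rank-nullity over $\D$ produces a non-zero row vector $v \in \D^{1\times n}$ with $v P = 0$. Through the embedding $\D \subseteq \A$ we may regard $v$ as a non-zero element of $\A^{1\times n}$ satisfying $v A = v P Q = 0$; but if $A$ were invertible in $M_n(\A)$, right multiplication by $A^{-1}$ would force $v = 0$, a contradiction. Hence $A$ is full, therefore invertible over $\D$, and the two inverses agree.

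The main point to be careful about is that $\A$ is not assumed to be stably finite, so it would not be legitimate to argue directly from $A = PQ$ with $r < n$ that $A$ fails to be invertible in $M_n(\A)$; one really needs to exhibit an explicit obstruction inside $\A$. The argument above supplies such an obstruction by doing all the bookkeeping on the $\D$-side: the only ingredients drawn from $\A$ are the embedding $\D \hookrightarrow \A$ and the two-sided inverse $A^{-1}$, while the division ring structure of $\D$ alone guarantees the existence of the witness $v$ via rank-nullity. If one preferred a more hands-on argument, an induction on $n$ using the Schur complement from Lemma \ref{lem:Schur complement} would also work, after first permuting rows and columns to bring a non-zero (hence $\D$-invertible) entry into the $(1,1)$ position; but the direct approach above avoids that bookkeeping and makes transparent which hypothesis is doing the work.
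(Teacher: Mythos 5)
Your proof is correct, and it takes a genuinely different route from the paper. The paper delegates this lemma to an induction on $n$ (citing Lemma~2.3 of \cite{Yin18}), which amounts to repeatedly bringing a non-zero entry into the $(1,1)$ position and passing to the $(n-1)\times(n-1)$ Schur complement over $\D$. You instead give a non-inductive, conceptual argument: reduce the claim to showing that non-fullness over $\D$ already forces non-invertibility in $M_n(\A)$, and then produce an explicit left zero divisor $v$ from a rank factorization over $\D$ using rank-nullity over the division ring. This is cleaner and makes visible that the only input from $\A$ is the uniqueness of two-sided inverses, while the witness $v$ lives entirely in $\D$; it also avoids having to track that the intermediate Schur complements remain in $M_k(\D)$. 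One cosmetic slip: the map $v \mapsto vP$ on row vectors is left $\D$-linear rather than a morphism of right $\D$-modules, so you should speak of free left $\D$-modules (or equivalently run the argument with column vectors and $Qw=0$); the dimension count and the conclusion are unchanged. The facts you invoke about linear algebra over a division ring (well-defined rank, rank-nullity, full $\Leftrightarrow$ invertible for square matrices) are all standard and, in the last case, exactly Lemma~\ref{lem:diagonalization} of the paper.
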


The proof of this lemma can be done by induction in the same way as for Lemma 2.3 in \cite{Yin18}, which shows that some recursively defined ring (which we should replace here by the division closure $\D$) has the same closure property as stated in this lemma. An alternative proof of Proposition \ref{prop:division closure} can also be found in \cite{Rei06}, relying on the notion of a von Neumann regular ring.

Of course, the question when the rational closure $\mathcal{R}_{\phi}$ or division closure $\D$ becomes a division ring is in general not an easy one. We will discuss in Section \ref{subsec:Atiyah} a situation where this happens .

We close this section by a lemma on the diagonalization of matrices over a division ring; this will play an important role in the next section.

\begin{lemma}
\label{lem:diagonalization}Suppose that $\mathcal{K}$ is a division ring and let $A$ be a matrix over $\mathcal{K}$. Then there exist two invertible matrices $U$ and $V$ over $\mathcal{K}$ such that
\[
UAV=\begin{pmatrix}\1_r & \0\\\0 & \0\end{pmatrix}
\]
where the size of the block $\1_{r}$ is equal to $\rho_{\mathcal{K}}(A)$, the inner rank of $A$ over $\mathcal{K}$. In particular, a full matrix over $\mathcal{K}$ is invertible.
\end{lemma}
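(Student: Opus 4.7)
My plan is to prove the diagonalization by Gaussian elimination over $\mathcal{K}$ and then identify the size of the identity block with the inner rank. The base case is trivial: if $A=0$, take $r=0$. Otherwise, I choose a nonzero entry $a$ of $A$, use row and column permutations (which are invertible) to move it to position $(1,1)$, and then left-multiply row 1 by $a^{-1}$, which is legitimate since every nonzero element of the division ring $\mathcal{K}$ is invertible. This makes the $(1,1)$ entry equal to $1$. I can then clear the remainder of the first column by subtracting appropriate left multiples of row 1 from the other rows, and clear the remainder of the first row by subtracting appropriate right multiples of column 1 from the other columns; each such step corresponds to left- or right-multiplication by an elementary invertible matrix over $\mathcal{K}$. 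The result is a block matrix $\begin{pmatrix} 1 & \0 \\ \0 & A' \end{pmatrix}$. Iterating on $A'$ and collecting all the elementary factors into $U$ and $V$ yields
\[
UAV = \begin{pmatrix}\1_r & \0 \\ \0 & \0 \end{pmatrix}
\]
for some integer $r \geq 0$.

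Next, I would identify $r$ with $\rho_{\mathcal{K}}(A)$. The upper bound $\rho_{\mathcal{K}}(A)\leq r$ is immediate from the explicit rank factorization
\[
UAV = \begin{pmatrix}\1_r \\ \0\end{pmatrix}\begin{pmatrix}\1_r & \0\end{pmatrix},
\]
combined with the invertibility of $U$ and $V$, which transfers the factorization to $A$ itself. For the lower bound $\rho_{\mathcal{K}}(A)\geq r$, suppose $A=PQ$ with $P\in M_{m,s}(\mathcal{K})$ and $Q\in M_{s,n}(\mathcal{K})$; then $UAV = (UP)(QV)$, so the column space of $UAV$, viewed as a right $\mathcal{K}$-submodule of $\mathcal{K}^m$, is contained in the column space of $UP$ and therefore has $\mathcal{K}$-dimension at most $s$. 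On the other hand, the column space of the block form $\begin{pmatrix}\1_r & \0 \\ \0 & \0\end{pmatrix}$ contains the standard basis vectors $e_1,\dots,e_r$ and thus has dimension exactly $r$. Hence $s\geq r$; taking $s = \rho_{\mathcal{K}}(A)$ gives the desired inequality.

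The ``in particular'' claim follows at once: if $A\in M_n(\mathcal{K})$ is full, then $r=n$, so $UAV=\1_n$ and therefore $A = U^{-1}V^{-1}$ is invertible. The proof is essentially classical linear algebra, so there is no deep obstacle; the only point requiring care is that $\mathcal{K}$ need not be commutative, so one must consistently work with left multiplication for row operations and right multiplication for column operations, and must phrase the dimension argument in the third paragraph in terms of right $\mathcal{K}$-modules (where a standard well-defined dimension theory is available for division rings).
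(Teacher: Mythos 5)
Your Gaussian elimination to reach the block-diagonal form is essentially the same as the paper's (row/column permutations to place a nonzero entry at $(1,1)$, scale by $a^{-1}$, clear the rest of the first row and column, recurse), so that portion matches. Where you diverge is in identifying $r$ with $\rho_{\mathcal{K}}(A)$: you give a self-contained argument via the dimension of the column space as a right $\mathcal{K}$-module --- showing that any factorization $A=PQ$ with inner dimension $s$ forces the column space of $UAV$ (which visibly has dimension $r$) into the column space of $UP$ of dimension at most $s$, so $s\geq r$, while the explicit factorization of the block form gives $\rho_{\mathcal{K}}(A)\leq r$. The paper instead invokes Theorem~\ref{thm:full minor}, which requires first verifying that full square matrices over $\mathcal{K}$ are closed under products and diagonal sums; the paper derives this closure from the already-established fact that full matrices are invertible, and then reads off $\rho_{\mathcal{K}}\bigl(\begin{smallmatrix}\1_r & \0 \\ \0 & \0\end{smallmatrix}\bigr)=r$ from that theorem. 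Your route is more elementary and avoids the appeal to Theorem~\ref{thm:full minor} entirely, at the cost of a brief appeal to the invariant dimension property for modules over a division ring (which is standard and which you correctly flag as the one point requiring care in the noncommutative setting). Both proofs are correct; yours is arguably more transparent for a reader not already comfortable with Cohn's machinery, while the paper's keeps the exposition tied to the inner-rank toolkit it develops elsewhere.
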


\begin{proof}
Though the entries in the matrix $A$ are noncommutative, the Gaussian elimination method still works here, as every non-zero element in a division ring is invertible. By permutations of rows and columns, we can write any full matrix $A$ in the form
\[
A=\begin{pmatrix}a & c\\d & B\end{pmatrix}\,
\]
with $a\neq0$ (since $A\neq \0$), then with the invertible matrices
\[
U=\begin{pmatrix}1 & \0 \\-da^{-1} & \1_{k-1}\end{pmatrix},\ V=\begin{pmatrix}1 & -a^{-1}c \\\0 & \1_{k-1}\end{pmatrix},
\]
we have
\[
UAV=\begin{pmatrix}1 & \0 \\\0 & B-da^{-1}c\end{pmatrix},
\]
where the block $B-da^{-1}c$ has to be a full matrix as $UAV$ does. Therefore, by induction on the dimension, we can see that each full matrix is invertible.

Now, given a (not necessarily full) matrix $A$, we have invertible matrices $U$ and $V$ such that
\[
UAV=\begin{pmatrix}\1_{r} & \0\\\0 & \0\end{pmatrix}
\]
by the Gaussian elimination. Then it remains to show $r=\rho_{\mathcal{K}}(A)$, that is, to show
\[
\rho_{\mathcal{K}}\begin{pmatrix}1_{r} & \0 \\\0 & \0\end{pmatrix}=r.
\]
This is achieved by applying Theorem \ref{thm:full minor}, provided that the assumptions in this theorem hold. So what we need to check is that the set of full square matrices is closed under products and diagonal sums, which can be inferred from the fact that full matrices are invertible. This fact has already been proved in the first paragraph and so the proof is completed.
\end{proof}

\section{Affiliated unbounded operators and Atiyah property}\label{sec:affiliated_operators_and_Atiyah}

In this section we will finally apply noncommutative rational functions to our tuple of operators from a tracial $W^*$-probability space. In order to get a reasonable theory we will allow to invert our operators as unbounded operators, affiliated to our von Neumann algebra. The linearization trick will then allow us to transfer our results about the invertibility from matrices over polynomials to rational functions.
Questions about comparing an analytic notion of the rank with our algebraic notion of the inner rank will play a crucial role in this context. This will lead naturally to the notion of the ``strong Atiyah property'', which we investigate in Section \ref{subsec:Atiyah}.

\subsection{Realization of free field}\label{subsec:free field realization}

Let $\left(\M,\tau\right)$ be a tracial $W^{\ast}$-probability space as before. In this section, we denote by $\A$ the set of all closed and densely defined linear operators affiliated with $\M$, which is known to be a $\ast$-algebra containing $\M$. An important and well-known fact is that the polar decomposition also holds in this case; see, for example, Sect. 9.29 in \cite{SZ}.

\begin{lemma}
Let $X$ be a closed and densely defined operator on some Hilbert space $H$, then we have $X=U\left|X\right|$, where $\left|X\right|=(X^{\ast}X)^{{1}/{2}}$ is a positive selfadjoint (so necessarily closed and densely defined) operator and $U$ is a partial isometry such that $U^*U=p_{(\ker(X))^{\bot}}$ and $UU^*=p_{\overline{\im(X)}}$. Moreover, $X$ is affiliated with $\M$ if and only if $U\in\M$ and $\left|X\right|$ is affiliated with $\M$.
\end{lemma}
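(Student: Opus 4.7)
The plan is to reduce everything to standard material on unbounded operators and then to argue affiliation via the uniqueness of the polar decomposition, commuting with $\M'$.

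First I would recall that, by the classical theorem of von Neumann, $X^{\ast}X$ is a densely defined positive selfadjoint operator on $H$, so that $|X| := (X^{\ast}X)^{1/2}$ is well-defined by the Borel functional calculus for unbounded selfadjoint operators; moreover, $\dom(X) = \dom(|X|)$ and the identity $\||X|\xi\| = \|X\xi\|$ holds for every $\xi$ in this common domain. In particular, $\ker(|X|) = \ker(X)$, and hence $\overline{\ran(|X|)} = (\ker|X|)^{\bot} = (\ker X)^{\bot}$.

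Next I would construct $U$ in the usual way: the prescription $|X|\xi \mapsto X\xi$ is a well-defined isometry from $\ran(|X|)$ onto $\ran(X)$, which extends by continuity to an isometry $\overline{\ran(|X|)} \to \overline{\ran(X)}$ and then by zero to a partial isometry $U$ on all of $H$. From the construction one reads off $X = U|X|$ together with the identities $U^{\ast} U = p_{(\ker X)^{\bot}}$ and $U U^{\ast} = p_{\overline{\ran(X)}}$. Uniqueness of such a decomposition with these support conditions is straightforward and will be needed below.

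For the affiliation statement, one direction is easy: if $U\in\M$ and $|X|$ is affiliated with $\M$, then the product $U|X|$ is a closed densely defined operator affiliated with $\M$. For the converse, assume $X$ is affiliated with $\M$. Then $X^{\ast}X$ is affiliated with $\M$, and by the Borel functional calculus applied inside $\M$ (using that spectral projections of affiliated selfadjoint operators belong to $\M$), $|X| = (X^{\ast}X)^{1/2}$ is affiliated with $\M$. Hence the source and range projections $p_{(\ker X)^{\bot}}$ and $p_{\overline{\ran(X)}}$ lie in $\M$. It remains to show $U\in\M$; for this I would take an arbitrary unitary $u'\in\M'$ and conjugate the identity $X = U|X|$, noting $u' X u'^{\ast} = X$ and $u' |X| u'^{\ast} = |X|$ (by affiliation), so that $u' U u'^{\ast}$ is a partial isometry with the same initial and final spaces and also implements the polar decomposition of $X$. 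Uniqueness forces $u' U u'^{\ast} = U$, and since $u'\in\M'$ was arbitrary, the double commutant theorem gives $U\in\M'' = \M$.

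The main obstacle, should one wish to write this out in detail rather than merely cite \cite{SZ}, is packaging the functional calculus for unbounded selfadjoint operators carefully enough to conclude that spectral projections of $X^{\ast}X$ lie in $\M$ whenever $X$ is affiliated with $\M$; once that is in hand, the commutation argument in the last paragraph is routine.
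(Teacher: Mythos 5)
Your proof is correct, and it reproduces the standard polar-decomposition argument from Str\u{a}til\u{a}--Zsid\'o which the paper explicitly cites in lieu of giving its own proof. The construction of $U$ from the isometry $|X|\xi\mapsto X\xi$, the observation that spectral projections of $|X|$ (and hence the source and range projections of $U$) belong to $\M$ once $X$ is affiliated, and the commutation argument with unitaries in $\M'$ combined with uniqueness of the decomposition are precisely what the cited reference does.
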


Since we are in the finite setting, i.e., $\tau$ is a trace on $\M$, we have in this situation that 
$$\tau(p_{(\ker(X))^{\bot}})=\tau(U^*U)=\tau(UU^*)=\tau(p_{\overline{\im(X)}}).$$
Let us record this in the following lemma, which can be seen as the unbounded analogue of Lemma \ref{lem:kernels}.

\begin{lemma}\label{lem:kernels unbdd}
Given $X\in\A$, let $p_{\ker(X)}$ and $p_{\overline{\im(X)}}$ denote the orthogonal projections onto $\ker(X)$ and the closure of $\im(X)$, respectively. Then they belong both to $\M$ and satisfy
\[\tau(p_{\ker(X)})+\tau(p_{\overline{\im(X)}})=1.\]
\end{lemma}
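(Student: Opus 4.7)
The plan is to derive this lemma directly from the polar decomposition just recalled, together with the trace property of $\tau$. The key observation is that the two projections in the statement are exactly the complements/images of the partial isometry appearing in the polar decomposition of $X$, so no additional functional calculus or approximation is required.

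First I would write $X = U|X|$ with $U\in\M$ a partial isometry satisfying $U^{\ast}U = p_{(\ker X)^{\perp}}$ and $UU^{\ast} = p_{\overline{\im(X)}}$. Since $|X|$ is injective on $(\ker|X|)^{\perp}$ and $U$ has initial space $(\ker X)^{\perp}$, a short argument (using that $|X|$ has the same kernel as $X$, as $\||X|\xi\|=\|X\xi\|$ on the domain) gives $\ker(X)=\ker(|X|) = (\im U^{\ast}U)^{\perp}$, so
\[
p_{\ker(X)} = \1 - U^{\ast}U.
\]
In particular $p_{\ker(X)}$ and $p_{\overline{\im(X)}} = UU^{\ast}$ both lie in $\M$, since $U\in\M$.

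For the trace identity, I would simply apply the traciality of $\tau$ to $U$ and $U^{\ast}$:
\[
\tau(p_{\ker(X)}) + \tau(p_{\overline{\im(X)}}) = \tau(\1 - U^{\ast}U) + \tau(UU^{\ast}) = 1 - \tau(U^{\ast}U) + \tau(UU^{\ast}) = 1,
\]
where in the last step one uses $\tau(U^{\ast}U)=\tau(UU^{\ast})$. I do not anticipate any real obstacle here; the only subtle point is justifying $\ker(X)=\ker(|X|)$ for the unbounded operator $X$, but this follows from the standard fact that $\xi\in\dom(X)=\dom(|X|)$ with $X\xi = 0$ if and only if $|X|\xi = 0$, which is a routine consequence of the polar decomposition for closed densely defined operators.
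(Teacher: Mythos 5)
Your proof is correct and is essentially the same argument the paper gives: both use the polar decomposition $X=U|X|$, identify $p_{(\ker X)^\perp}=U^\ast U$ and $p_{\overline{\im(X)}}=UU^\ast$, and invoke traciality of $\tau$. The only difference is that you spell out the justification of $\ker(X)=\ker(|X|)$, which the paper treats as part of the standard polar decomposition statement.
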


A crucial consequence of this is the following: consider an operator $X\in\A$ with $\ker(X)=\{0\}$; then $p_{\ker(X)}=0$, and thus, by the lemma above, $\tau(p_{\overline{\im(X)}})=1$; which implies, by the faithfulness of $\tau$, that $p_{\overline{\im(X)}}=1$; which means that the image of $X$ is dense in $H$; and hence the inverse of $X$ exists as an unbounded operator. The conclusion of all this is that in our tracial $W^*$-setting the invertibility of an affiliated unbounded operator is, as in the finite-dimensional case, just the question whether the operator is injective or not; or to put it in a more algebraic way, an operator is invertible if and only if it has no non-trivial zero divisor.

Moreover, for each integer $N$, we can consider the matricial extension $(M_{N}(\M),\tr_{N}\circ\tau^{(N)})$, where $\tr_{N}$ is the normalized trace on $M_N(\C)$ and $\tau^{(N)}$ the matricial amplification of $\tau$ by applying $\tau$ entrywisely. Then $(M_{N}(\M),\tr_{N}\circ\tau^{(N)})$ is again a $W^{\ast}$-probability space and we have the matricial extension $M_{N}(\A)$, which is the $\ast$-algebra of closed and densely defined linear operators affiliated to $(M_{N}(\M),\tr_{N}\circ\tau^{(N)})$.
With the help of the polar decomposition and its matricial extended version, we can see that $\A$ is stably finite. This statement is probably well-known to experts; however, since we could not localize a proof in the literature, we provide here a short proof. The fact, that the statement as well as the proof can be extended from the bounded to the unbounded case was brought to our attention by Dima Shlyakhtenko, see \cite{HMS18}.

\begin{lemma}
$\A$ is stably finite and thus the evaluation of rational functions is well-defined on $\A$.
\end{lemma}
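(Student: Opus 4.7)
The plan is to show that stable finiteness of $\A$ reduces, via polar decomposition in $M_n(\A)$, to the fact that an affiliated operator with trivial kernel is automatically invertible in the affiliated algebra; the final conclusion about evaluation of rational functions then follows from Theorem \ref{thm:evaluation} applied to $\A$.

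First I would observe that Lemma \ref{lem:kernels unbdd} applies verbatim at the matrix level: since $(M_{N}(\M),\tr_{N}\circ\tau^{(N)})$ is again a tracial $W^{\ast}$-probability space with affiliated algebra $M_{N}(\A)$, the identity $\tau(p_{\ker(B)})+\tau(p_{\overline{\im(B)}})=1$ holds for any $B\in M_{N}(\A)$ (with $\tau$ replaced by $\tr_{N}\circ\tau^{(N)}$). By faithfulness of the trace, this gives the crucial implication: if $\ker(B)=\{0\}$, then $\im(B)$ is dense in $H^{N}$.

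Second, I would prove the key intermediate claim: if $B\in M_{N}(\A)$ satisfies $\ker(B)=\{0\}$, then $B$ is invertible in $M_{N}(\A)$. Using polar decomposition $B=U|B|$ in $M_{N}(\A)$, the assumption $\ker(B)=\{0\}$ forces $U^{\ast}U=p_{(\ker B)^{\perp}}=\1$, so $U$ is an isometry; by the previous paragraph $\overline{\im(B)}=H^{N}$, and hence $UU^{\ast}=p_{\overline{\im B}}=\1$, so $U$ is in fact a unitary in $M_{N}(\M)$. Since $|B|$ is a positive selfadjoint operator with trivial kernel, $|B|^{-1}$ exists as a positive selfadjoint operator affiliated to $M_{N}(\M)$. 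Therefore $B^{-1}=|B|^{-1}U^{\ast}$ is well defined, closed, densely defined, and affiliated to $M_{N}(\M)$, i.e.\ $B^{-1}\in M_{N}(\A)$.

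Third, I apply this to the stable finiteness question. Assume $AB=\1_{N}$ for $A,B\in M_{N}(\A)$. If $Bv=0$ for some vector $v$ in the appropriate domain, then $v=\1_{N}v=ABv=0$ (using associativity in $\A$); so $\ker(B)=\{0\}$. By the claim just proved, $B$ is invertible in $M_{N}(\A)$, and multiplying $AB=\1_{N}$ on the right by $B^{-1}$ yields $A=B^{-1}$, whence $BA=BB^{-1}=\1_{N}$. This is exactly stable finiteness. The final clause of the lemma — that evaluation of rational functions is well-defined on $\A$ — is then an immediate consequence of Theorem \ref{thm:evaluation} applied to $\A$.

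The main obstacle I anticipate is a careful handling of the algebraic operations in $\A$: products and inverses are only defined after taking closures of operators that are a priori just densely defined, so one must verify that equations such as $A=B^{-1}$ are compatible with the closure operations used to define multiplication in $M_{N}(\A)$. The polar decomposition, however, reduces everything to manipulations in $M_{N}(\M)$ (for $U$) and in the functional calculus of a single positive selfadjoint operator (for $|B|$), where these issues are standard.
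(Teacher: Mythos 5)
Your proof is correct, and it takes a more direct route than the paper's. Both arguments hinge on the same two ingredients: the polar decomposition within $M_N(\A)$ and the trace identity $\tau(p_{\ker})+\tau(p_{\overline{\im}})=1$ (Lemma \ref{lem:kernels unbdd}) in a finite von Neumann algebra. The paper, after reducing to $N=1$, first proves the special case of $XY=1$ with $X=X^{\ast}$ (by taking adjoints and showing $Y=Y^{\ast}$), and then handles general $X,Y$ by polar-decomposing $Y=U|Y|$, applying the selfadjoint case to $(XU)|Y|=1$ to get $|Y|XU=1$, and deducing that $U$ is injective, hence unitary, hence $Y$ is injective and thus invertible with inverse $X$. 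You instead skip the selfadjoint detour entirely: you isolate the key claim that any $B\in M_N(\A)$ with $\ker(B)=\{0\}$ is invertible in $M_N(\A)$ (proved via $B=U|B|$ with $U$ unitary and $|B|^{-1}$ from functional calculus), observe that $AB=\1_N$ forces $\ker(B)=\{0\}$, and then read off $A=B^{-1}$, hence $BA=\1_N$. This is cleaner, and it has the added merit of making explicit, via the formula $B^{-1}=|B|^{-1}U^{\ast}$, that the inverse of an injective affiliated matrix is itself affiliated — a point the paper's narration treats somewhat implicitly in the discussion following Lemma \ref{lem:kernels unbdd}. Your closing caveat about associativity and closures in $\A$ is exactly the right thing to flag, and it is indeed resolved by the Murray--von Neumann theory, which guarantees that $M_N(\A)$ is a genuine $\ast$-algebra so that the one-line manipulations $A=A(BB^{-1})=(AB)B^{-1}=B^{-1}$ are legitimate.
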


\begin{proof}
Since $M_{N}(\A)$ is, for each integer $N$, also a $\ast$-algebra of affiliated operators to a $W^*$-probability space, it suffices to treat the case $N=1$, i.e., to prove that for any $X,Y\in\A$ the equation $XY=1$ implies also $YX=1$. First, let $X,Y\in\A$ with $XY=1$ and $X=X^{\ast}$. Then $Y^{\ast}X=1$, hence $Y^{\ast}=Y^{\ast}XY=Y$, and thus $YX=1$. Next, we consider now arbitrary $X,Y\in\A$ with $XY=1$. By the polar decomposition, we can write $Y=U\left|Y\right|$ with a partial isometry $U\in\M$ and $\left|Y\right|\in\A$. Note that $\left|Y\right|$ is selfadjoint and satisfies $XU\left|Y\right|=XY=1$ in $\A$, so by the previous argument we have $\left|Y\right|XU=1$. Then $U$ is injective and so it must be unitary by the previous lemma. Hence $Y$ is also injective and thus invertible with inverse $X$.

That the evaluation of rational functions is well-defined on $\A$ follows then from Theorem \ref{thm:evaluation}.
\end{proof}


\begin{definition}\label{def:rank}
For any $P\in M_{N}(\A)$, we define its \emph{rank} as
\[\rank(P)=\Tr_{N}\circ\tau^{(N)}(p_{\overline{\im(P)}}),\]
where $\Tr_N$ is the unnormalized trace on $M_N(\C)$.
\end{definition}

For later use, we point out that, due to Lemma \ref{lem:kernels unbdd}, it holds true that
\begin{equation}\label{eq:rank_kernel}
\rank(P) = N - \Tr_{N}\circ\tau^{(N)}(p_{\ker(P)}).
\end{equation}

With this analytic notion of a rank we can rephrase the statement on the invertibility of affiliated operators as follows.

\begin{lemma}
\label{invertible and rank}$P\in M_{N}(\A)$ is invertible if and only if $\rank(P)=N$.
\end{lemma}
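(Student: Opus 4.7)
The plan is to reduce everything to the scalar case (i.e.\ $N=1$) by passing to the matricial amplification $(M_N(\M), \Tr_N\circ\tau^{(N)})$ and then to apply the principle, established just before Lemma~\ref{lem:kernels unbdd}, that in the tracial $W^*$-setting an affiliated operator is invertible if and only if it has trivial kernel. The key quantitative input will be the identity \eqref{eq:rank_kernel}, namely $\rank(P) = N - \Tr_N\circ\tau^{(N)}(p_{\ker(P)})$, combined with the faithfulness of $\tau$ (hence of $\tau^{(N)}$).

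For the forward direction, I would assume that $P\in M_N(\A)$ is invertible. Then $\ker(P)=\{0\}$ (otherwise $P^{-1}P$ would not equal $\1_N$), so $p_{\ker(P)}=0$. Via Lemma~\ref{lem:kernels unbdd} applied in $(M_N(\M),\tr_N\circ\tau^{(N)})$, this forces $\tau^{(N)}(p_{\overline{\im(P)}}) = \1_N$ in the sense that $\Tr_N\circ\tau^{(N)}(p_{\overline{\im(P)}}) = N$; that is exactly $\rank(P) = N$.

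For the reverse direction, assume $\rank(P) = N$. By \eqref{eq:rank_kernel} this gives $\Tr_N\circ\tau^{(N)}(p_{\ker(P)}) = 0$. Since $\tau$ is faithful, so is $\tau^{(N)}$, and hence $p_{\ker(P)} = 0$; in other words $\ker(P)=\{0\}$. Now I invoke the observation recalled in the text following Lemma~\ref{lem:kernels unbdd}, applied inside the finite $W^*$-probability space $(M_N(\M), \tr_N\circ\tau^{(N)})$: an affiliated operator in the finite tracial setting which has trivial kernel automatically has dense image (again by Lemma~\ref{lem:kernels unbdd}, since $\tau^{(N)}(p_{\overline{\im(P)}})=1$ forces $p_{\overline{\im(P)}}=\1_N$), and therefore admits an affiliated inverse. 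Consequently $P$ is invertible in $M_N(\A)$.

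There is really no substantive obstacle here: the lemma is a direct translation of the ``kernel-free $\Leftrightarrow$ invertible'' principle from scalars to matrices, packaged through the rank defined via $p_{\overline{\im(P)}}$. The only point requiring mild care is to make sure that $M_N(\A)$ is correctly identified with the algebra of operators affiliated to $(M_N(\M),\tr_N\circ\tau^{(N)})$ so that Lemma~\ref{lem:kernels unbdd} and the faithfulness argument apply verbatim at matrix level — but this has already been recorded in the text preceding the statement.
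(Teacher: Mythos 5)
Your proof is correct and matches the paper's intended argument exactly: the paper omits an explicit proof of this lemma precisely because it follows directly from the combination of \eqref{eq:rank_kernel}, Lemma \ref{lem:kernels unbdd} applied in $(M_N(\M),\tr_N\circ\tau^{(N)})$, faithfulness of $\tau^{(N)}$, and the ``trivial kernel $\Leftrightarrow$ invertible'' observation recorded in the text just before, all of which you have assembled in the right order.
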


Compare this to the corresponding algebraic statement (namely, Theorem \ref{thm:free field}):
$P\in M_N(\C\left\langle x_{1},\dots,x_{n}\right\rangle)$ is invertible (in matrices over the free field) if and only if its inner rank $\rho(P)=N$.

Recall now that Theorem \ref{thm:maximality of Delta} asserts that if a tuple $X=(X_{1},\dots,X_{n})$ of random variables satisfies $\Delta(X)=n$, then a linear full matrix $P$ over $\C\left\langle x_{1},\dots,x_{n}\right\rangle$ gives an invertible evaluation $P(X)$ over $\A$. Or equivalently, in terms of our notions of ranks: for a linear matrix $P$ in $M_{N}(\C\left\langle x_{1},\dots,x_{n}\right\rangle)$,
\[
\rho(P)=N\implies\rank(P(X))=N.
\]
One of the main goals of this section is to show that actually these two ranks are equal when $\Delta(X)=n$; actually, we will give in the following theorem a couple of equivalent characterizations of the equality of the algebraic and the analytic rank.

\begin{theorem}
\label{thm:Atiyah-1}Let $(\M,\tau)$ be a tracial $W^{\ast}$-probability space and $\A$ the $\ast$-algebra of affiliated unbounded operators.
For a given tuple $X=(X_{1},\dots,X_{n})$ in $\M^n$, we consider the evaluation map $\ev_{X}:\C\left\langle x_{1},\dots,x_{n}\right\rangle \rightarrow\A$. Then the following statements are equivalent.
\begin{enumerate}
\item\label{it:Atiyah-1 i} For any $N\in\N$ and $P\in M_{N}(\C\left\langle x_{1},\dots,x_{n}\right\rangle )$ we have: if $P$ is linear and full, then $P(X)\in M_{N}(\A)$ is invertible.
\item\label{it:Atiyah-1_ii} For any $N\in\N$ and $P\in M_{N}(\C\left\langle x_{1},\dots,x_{n}\right\rangle )$ we have: if $P$ is full, then $P(X)\in M_{N}(\A)$ is invertible.
\item\label{it:Atiyah-1_iii} For any $N\in\N$ and $P\in M_{N}(\C\left\langle x_{1},\dots,x_{n}\right\rangle )$ we have: $\rank(P(X))=\rho(P)$.
\item\label{it:Atiyah-1_iv} We have $X\in\dom_\A(r)$ for each $r\in \C\plangle x_{1},\dots,x_{n}\prangle$ and $\Ev_{X}$ as introduced in Definition \ref{def:evaluation} induces an injective homomorphism $\Ev_{X}: \C\plangle x_{1},\dots,x_{n}\prangle\rightarrow\A$ that extends the evaluation map $\ev_{X}:\C\left\langle x_{1},\dots,x_{n}\right\rangle \rightarrow\A$.
\end{enumerate}
Moreover, if the equivalent conditions above are satisfied, then
\begin{equation}\label{eq:rank_equality}
\rank(P(X)) = \rho(P) = \rho_\A(P(X)) \qquad\text{for all $P\in M_{N}(\C\left\langle x_{1},\dots,x_{n}\right\rangle )$},
\end{equation}
where $\rho_{\A}(P(X))$ denotes the inner rank of $P(X)$ over the algebra $\A$.
\end{theorem}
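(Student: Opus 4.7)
The plan is to establish the four equivalences through a short cycle in which the linearization trick serves as the bridge between matrices of polynomials and rational functions, and then to deduce \eqref{eq:rank_equality} by sandwiching $\rho_\A(P(X))$ between $\rank(P(X))$ and $\rho(P)$.

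First I would dispatch the easy links. The implication (ii)$\Rightarrow$(i) is immediate since a linear full matrix is in particular full. For (i)$\Rightarrow$(iv), I would take any $r \in \C\plangle x_1,\dots,x_n\prangle$ and invoke Theorem \ref{thm:linear representation} to produce a linear representation $(u,L,v)$ with $L$ linear and full; by (i) the evaluation $L(X)$ is invertible in $M_k(\A)$, so $X \in \dom_\A(r)$ and $\Ev_X$ is well-defined (independent of the representation) by Theorem \ref{thm:evaluation}, extends $\ev_X$, and respects sums and products. Injectivity would come from the fact that $\C\plangle x_1,\dots,x_n\prangle$ is a skew field: a nonzero $r$ has a multiplicative inverse $r^{-1}$ with its own linear representation, so $\Ev_X(r)\,\Ev_X(r^{-1}) = 1$ forces $\Ev_X(r)\neq 0$. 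For (iv)$\Rightarrow$(ii), a full $P \in M_N(\C\langle x_1,\dots,x_n\rangle)$ is invertible over the free field by Theorem \ref{thm:free field}, so $P^{-1}$ is a matrix of rational functions; applying $\Ev_X$ entrywise to $P^{-1} P = \1_N = P P^{-1}$ yields an inverse of $P(X)$ in $M_N(\A)$.

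The main work lies in (ii)$\Rightarrow$(iii), which I would handle by a Schur-complement argument. With $r := \rho(P)$, Theorem \ref{thm:full minor} (applicable since $\C\langle x_1,\dots,x_n\rangle$ is a Sylvester domain) produces a full $r\times r$ submatrix; after permuting rows and columns, assume
$$P = \begin{pmatrix} P_0 & A \\ B & C \end{pmatrix}\quad\text{with } P_0 \in M_r(\C\langle x_1,\dots,x_n\rangle)\ \text{full}.$$
By (ii), $P_0(X)$ is invertible in $M_r(\A)$. The block factorization over the free field, using $P_0^{-1}$ as a matrix of rational functions together with Lemma \ref{lem:diagonalization} (which identifies inner rank with the usual rank over a skew field), shows that $\rho(P) = r + \rho(S)$ where $S = C - B P_0^{-1} A$ is the Schur complement, so $\rho(S) = 0$, i.e., $S = 0$ in the free field. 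Since (ii) already implies (i)$\Rightarrow$(iv) by the previous paragraph, the extension $\Ev_X$ is a homomorphism and hence $S(X) = 0$ in $M_{N-r}(\A)$. Running the same Schur-complement factorization over $\A$, where $P_0(X)^{-1}$ is a bona fide affiliated operator, writes
$$P(X) = L(X)\, \operatorname{diag}\!\bigl(P_0(X),\, S(X)\bigr)\, U(X)$$
with $L(X)$, $U(X)$ unitriangular and hence invertible in $M_N(\A)$. Invariance of the analytic rank under pre- and post-multiplication by invertibles, combined with additivity of rank on block diagonals, then yields $\rank(P(X)) = \rank(P_0(X)) + \rank(S(X)) = r = \rho(P)$. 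Finally (iii)$\Rightarrow$(ii) is a one-liner: $P$ full means $\rho(P) = N$, so (iii) forces $\rank(P(X)) = N$, whence $P(X)$ is invertible by Lemma \ref{invertible and rank}.

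For the concluding identity \eqref{eq:rank_equality} I would sandwich: any polynomial factorization $P = QR$ of inner rank $\rho(P)$ induces $P(X) = Q(X) R(X)$, so $\rho_\A(P(X)) \leq \rho(P)$; conversely, any factorization $P(X) = Q R$ over $\A$ with $Q \in M_{N,s}(\A)$ forces $\overline{\im(P(X))} \subseteq \overline{\im(Q)}$, and padding $Q$ to a square and applying Lemma \ref{lem:kernels unbdd} bounds $\rank(Q) \leq s$, giving $\rank(P(X)) \leq \rho_\A(P(X))$. Chaining with (iii) produces $\rank(P(X)) \leq \rho_\A(P(X)) \leq \rho(P) = \rank(P(X))$, hence all three are equal. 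The main obstacle I anticipate is the transfer step in (ii)$\Rightarrow$(iii): the algebraic identity $S = 0$ lives naturally only in the free field, so one must first bootstrap (i)$\Rightarrow$(iv) to transport it to the operator identity $S(X) = 0$ in $M_{N-r}(\A)$; the rest is bookkeeping around linearization and the polar-decomposition computation underlying Lemma \ref{lem:kernels unbdd}.
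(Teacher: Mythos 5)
Your proof is correct, but it takes a genuinely different route at the bottleneck step of establishing $\rank(P(X)) = \rho(P)$. The paper closes the cycle as (i)$\implies$(iv)$\implies$(iii)$\implies$(ii)$\implies$(i), with (iv)$\implies$(iii) handled in one shot by fully diagonalizing $P$ over the free field to $UPV = \begin{pmatrix}\1_r & \0 \\ \0 & \0\end{pmatrix}$ via Lemma \ref{lem:diagonalization}, pushing this identity forward with the homomorphism $\Ev_X$, and invoking invariance of the analytic rank under invertibles (Lemma \ref{inverstible matrix preserve rank}). You instead establish (i)$\Leftrightarrow$(ii)$\Leftrightarrow$(iv) first, proving (iv)$\Rightarrow$(ii) directly from the free-field inverse $P^{-1}$, and then prove (ii)$\Rightarrow$(iii) by extracting a full $r\times r$ submatrix via Theorem \ref{thm:full minor}, showing the Schur complement $S$ vanishes in the free field (in substance an appeal to Proposition \ref{prop:invertible minor} plus Theorem \ref{thm:free field}), bootstrapping (iv) from (ii) to transport $S=0$ to $S(X)=0$ in $M_{N-r}(\A)$, and rerunning the Schur-complement factorization over $\A$. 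Both arguments rest on the same core idea—push an algebraic normal form of $P$ over $\C\plangle x_1,\dots,x_n\prangle$ to an operator identity via $\Ev_X$—but the paper's diagonalization is more economical: once (iv) is in hand, it delivers the rank identity immediately, whereas your detour through the full submatrix and Schur complement buys nothing extra and carries a hidden redundancy (having bootstrapped (iv), the direct diagonalization already yields (iii)). Your treatment of (i)$\Rightarrow$(iv) via linear representations and the sandwich argument for \eqref{eq:rank_equality} are essentially identical to the paper's.
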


Before giving the proof of Theorem \ref{thm:Atiyah-1}, some comments are in order.

\begin{remark}\label{rem:zero_divisors}
The property formulated in Item \ref{it:Atiyah-1_iv} of Theorem \ref{thm:Atiyah-1} implies that $r(X)$, for any non-zero rational function $r$, is not a (left) zero divisor over $\M$, i.e., there is no $0\neq w\in\M$ such that $r(X)w=0$ holds. Indeed, since $\Ev_{X}$ is a homomorphism defined on the whole free field, we see that $r(X)$ is invertible in $\A$ with $r(X)^{-1} = \Ev_X(r^{-1})$; thus, for any $w\in\M$ satisfying $r(X)w=0$, we may read the latter as an equation in $\A$ and so derive by multiplication from the left with $r(X)^{-1}$ that necessarily $w=0$, as asserted.

Similarly, the property formulated in Item \ref{it:Atiyah-1_ii} of Theorem \ref{thm:Atiyah-1} excludes that $P(X)$, for every full $N\times N$ matrix $P$ over $\C\langle x_1,\dots,x_n\rangle$, is a (left) zero divisor in $M_N(\M)$.
\end{remark}

\begin{remark}\label{rem:free_field_involution}
The anti-linear involution $\ast$ on $\C\langle x_1,\dots,x_n\rangle$, which is determined by $1^\ast=1$ and $x_j^\ast = x_j$ for $j=1,\dots,n$, extends uniquely to an anti-linear involution $\ast$ on the free field $\C\plangle x_1,\dots,x_n\prangle$. On the level of linear representations, this can be expressed very nicely: if $\rho=(u,A,v)$ is a linear representation of $r$, then $\rho^\ast := (v^\ast,A^\ast,u^\ast)$ gives a linear representation of $r^\ast$; note that with $A$ also $A^\ast$ is full in $M_N(\C\langle x_1,\dots,x_n\rangle)$. Therefore, in the case where our tuple $X$ consists of selfadjoint operators, the homomorphism $\Ev_{X}: \C\plangle x_{1},\dots,x_{n}\prangle\rightarrow\A$ considered in Item \ref{it:Atiyah-1_iv} of Theorem \ref{thm:Atiyah-1} is automatically a $\ast$-homomorphism whenever it exists.
\end{remark}

For the proof of Theorem \ref{thm:Atiyah-1} we will also need the fact that the analytic rank does not change when multiplied by invertible matrices over $\A$.

\begin{lemma}
\label{inverstible matrix preserve rank}(See \cite[Lemma 2.3]{Lin93}) If $Q$ is invertible in $M_{N}(\A)$, then $\rank(P)=\rank(PQ)=\rank(QP)$ for any $P\in M_{N}(\A)$.
\end{lemma}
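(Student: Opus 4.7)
The plan is to reduce both equalities to the kernel formulation of rank given in \eqref{eq:rank_kernel}, namely $\rank(X) = N - \Tr_N\circ\tau^{(N)}(p_{\ker(X)})$ for $X\in M_N(\A)$, since kernels behave more transparently under composition with invertible operators than images do.

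First I would dispose of the easy side $\rank(QP) = \rank(P)$. Since $Q$ is invertible in $M_N(\A)$, $Q$ has trivial kernel as an affiliated operator; consequently, for any $v$ in the natural domain on which $QP$ makes sense, $QPv = 0$ forces $Pv = 0$, and conversely $Pv = 0$ trivially yields $QPv = 0$. Hence $\ker(QP) = \ker(P)$, so the kernel projections coincide in $M_N(\M)$ and \eqref{eq:rank_kernel} immediately gives $\rank(QP) = \rank(P)$.

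Next I would record the auxiliary identity $\rank(X^\ast) = \rank(X)$ for every $X \in M_N(\A)$. This follows from Lemma \ref{lem:kernels unbdd} (applied in the matricial amplification $(M_N(\M), \Tr_N\circ\tau^{(N)})$, which is again a tracial $W^\ast$-probability space so that the lemma applies verbatim): on the one hand $p_{\ker(X^\ast)} = \1 - p_{\overline{\im(X)}}$, so $\Tr_N\circ\tau^{(N)}(p_{\ker(X^\ast)}) = N - \rank(X)$; on the other hand \eqref{eq:rank_kernel} applied to $X^\ast$ gives $\rank(X^\ast) = N - \Tr_N\circ\tau^{(N)}(p_{\ker(X^\ast)})$, and combining the two yields $\rank(X^\ast) = \rank(X)$.

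With these two facts in hand, the remaining equality $\rank(PQ) = \rank(P)$ is obtained by taking adjoints: $(PQ)^\ast = Q^\ast P^\ast$, and $Q^\ast$ is invertible in $M_N(\A)$ with inverse $(Q^{-1})^\ast$. Applying the adjoint identity and then the first step gives
\[
\rank(PQ) = \rank\bigl((PQ)^\ast\bigr) = \rank(Q^\ast P^\ast) = \rank(P^\ast) = \rank(P).
\]
The only point requiring care is that products, adjoints, and kernels are being manipulated for \emph{unbounded} affiliated operators; but since $\A$ (and each $M_N(\A)$) is an honest $\ast$-algebra in the finite tracial setting, all of these operations close up properly and the argument above goes through without further subtleties. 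No serious obstacle is expected.
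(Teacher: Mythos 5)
Your argument is correct. Note that the paper itself gives no proof of this lemma at all — it is quoted as Lemma~2.3 of \cite{Lin93} — so there is nothing internal to compare against; what you have produced is a correct, self-contained proof in the affiliated-operator setting. Your route is the natural one: rewrite rank via the kernel projection as in \eqref{eq:rank_kernel}, observe $\ker(QP)=\ker(P)$ when $Q$ is injective, prove $\rank(X^\ast)=\rank(X)$ from $\ker(X^\ast)=(\overline{\im(X)})^{\perp}$ together with Lemma \ref{lem:kernels unbdd}, and then get the $PQ$ case by taking adjoints. The only point you wave at rather than spell out is the unbounded-operator subtlety in the first step: the product $QP$ in $M_N(\A)$ is the \emph{closure} of the composition, so a priori a vector in $\ker(QP)$ need not lie in the domain of the composition. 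This is easily repaired using exactly the algebra structure you invoke: since $M_N(\A)$ is an algebra, $P=Q^{-1}(QP)$ as elements of $M_N(\A)$, so $(QP)v=0$ puts $v$ in the domain of the composition $Q^{-1}\circ(QP)$, which is a restriction of $P$, whence $Pv=0$; the reverse inclusion is immediate because $QP$ extends the composition $Q\circ P$. With that one sentence added, the proof is complete.
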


\begin{proof}[Proof of Theorem \ref{thm:Atiyah-1}]
It's easy to see that (ii)$\implies$(i) is trivial and (iii)$\implies$(ii) follows from Lemma \ref{invertible and rank}.

(iv)$\implies$(iii): Assume that $P\in M_{N}(\C\left\langle x_{1},\dots,x_{n}\right\rangle )$ has inner rank $\rho(P)=r$, then by Lemma \ref{lem:diagonalization}, there exist two invertible matrices $U$ and $V$ over the free field $\C(\plangle x_{1},\dots,x_{n}\prangle)$ such that
\[
UPV=\begin{pmatrix}\1_r & \0\\\0 & \0\end{pmatrix}.
\]
As we assume the statement (iv), the extended evaluation $\Ev_{X}:\C\plangle x_{1},\dots,x_{n}\prangle\rightarrow \A$, as a homomorphism, implies that $U(X),V(X)$ are invertible. By Lemma \ref{inverstible matrix preserve rank}, we obtain
\[
\rank(P(X))=\rank(U(X)P(X)V(X))=\rank\begin{pmatrix}\1_{r} & \0\\\0 & \0\end{pmatrix}=r.
\]

(i)$\implies$(iv): First, recall from Definition \ref{def:evaluation} that a rational function $r$ in the free field $\C\plangle x_{1},\dots,x_{n}\prangle$ satisfies $X\in\dom_\A(r)$ if there is a linear representation $\rho=(u,A,v)$ of $r$ such that $X\in \dom_\A(\rho)$, i.e., $A(X)$ is invertible; but in fact, each linear representation of $r$ (whose existence is guaranteed by Theorem \ref{thm:linear representation}) has this property due to our assumption (i) as $A$ is full.
Thus, according to Definition \ref{def:evaluation} and Theorem \ref{thm:evaluation}, the evaluation $\Ev_X(r)$ is well-defined for each $r\in \C\plangle x_{1},\dots,x_{n}\prangle$ and thus induces a map $\Ev_X: \C\plangle x_1,\dots,x_n\prangle \to \A$. Moreover, we can infer from the proof of Theorem \ref{thm:evaluation} that the evaluation of rational functions via linear representations respects the arithmetic operations between rational functions. Hence the evaluation map $\Ev_X: \C\plangle x_1,\dots,x_n\prangle \to \A$ forms a homomorphism which agrees with $\ev_{X}$ on $\C\left\langle x_{1},\dots,x_{n}\right\rangle$. Moreover, $\Ev_{X}$ has to be injective as a homomorphism from a skew field.

Suppose now that the equivalent conditions are satisfied. Then, for any matrix $P\in M_{N}(\C\left\langle x_{1},\dots,x_{n}\right\rangle )$, we can consider the rank factorization $P(X)=AB$ of $P(X)$ over $\A$, where $A\in M_{N,r}(\A)$ and $B\in M_{r,N}(\A)$ for $r := \rho_\A(P(X)) \leq N$. This can be rewritten as $P(X) = \hat{A} \hat{B}$ with the square matrices $\hat{A},\hat{B} \in M_N(\A)$ that are defined by
\[
\hat{A} := \begin{pmatrix} A & \0_{N \times (N-r)} \end{pmatrix} \qquad\text{and}\qquad \hat{B} := \begin{pmatrix} B\\ \0_{(N-r) \times N} \end{pmatrix}.
\]
From this, we see that $(\tr_N \circ \tau^{(N)})(p_{\ker(\hat{A})}) \geq \frac{N-r}{N}$, so that $\rank(\hat{A}) \leq r$ by \eqref{eq:rank_kernel}; thus, since $\im(P(X))\subset\im(\hat{A})$, it follows that
\[\rank(P(X)) \leq \rank(\hat{A})\leq r.\]
On the other hand, we may observe that in general
\[r = \rho_{\A}(P(X)) \leq \rho(P),\]
because each rank factorization of $P$ yields after evaluation at $X$ a factorization of $P(X)$ over $\A$. Finally, the third property in the theorem gives us
\[\rho(P) = \rank(P(X)).\]
Thus, in summary, the asserted equality \eqref{eq:rank_equality} follows.
\end{proof}

\begin{remark}
The property in Theorem \ref{thm:Atiyah-1} \ref{it:Atiyah-1_iv} also says that the image of the free field under the evaluation map forms a division subring of $\A$ that contains the algebra $\C\left\langle X_{1},\dots,X_{n}\right\rangle$ generated by $X_1,\dots, X_n$. Therefore, we can also infer that the division closure $\D$ of $\C\left\langle X_{1},\dots,X_{n}\right\rangle$ is contained in the image of the free field $\Ev_{X}(\C\plangle x_{1},\dots,x_{n}\prangle)$; so the division closure is in this case a division ring. Such a result was first established by Linnell in his paper \cite{Lin93} for free groups. Additionally, then $\D$ is actually the rational closure $\mathcal{R}$ of $\C\langle X_{1},\dots,X_{n}\rangle$ with respect to $\ev_{X}$ by Proposition \ref{prop:division closure}. Combining this with the fact that $\D\subset\Ev_{X}(\C\plangle x_{1},\dots,x_{n}\prangle)\subset\mathcal{R}$, we have the following corollary.
\begin{quote}
If $X=(X_{1},\dots,X_{n})$ is a tuple of random variables in some tracial $W^{\ast}$-probability space $(\M,\tau)$ such that $X$ generates the free field, i.e., $X$ satisfies the equivalent properties in Theorem \ref{thm:Atiyah-1}, then the division closure and the rational closure of the algebra $\C\langle X_{1},\dots,X_{n}\rangle$ in $\A$ are equal and are isomorphic to the free field by $\Ev_X$.
\end{quote}
\end{remark}

\begin{remark}
Let us address the commutative counterpart of Theorem \ref{thm:Atiyah-1};  so let $X$ now be a tuple of commuting operators. In this case a rational function can be written as the quotient of two polynomials, and thus the question whether all rational functions are well-defined for the evaluation at $X$ reduces to the question if all non-zero polynomials are invertible after evaluation at $X$. 
The statements and the proof of Theorem \ref{thm:Atiyah-1} can quite easily be adapted to this much simpler commutative setting and
we have the following analogue of Theorem \ref{thm:Atiyah-1} for commuting operators. There we denote the algebra of commutative polynomials by $\C\left[x_{1},\dots,x_{n}\right]$ and its field of fractions by $\C\left(x_{1},\dots,x_{n}\right)$.

For a given tuple $X=(X_{1},\dots,X_{n})$ of commuting operators in a tracial $W^{\ast}$-probability space $(\M,\tau)$, the following statements are equivalent.
\begin{enumerate}
\item For any non-zero polynomial $p\in \C\left[x_{1},\dots,x_{n}\right]$ we have that $p(X)$ is invertible in the $\ast$-algebra $\A$ of unbounded operators affiliated to $\M$.
\item For any $N\in\N$ and $P\in M_{N}(\C\left[x_{1},\dots,x_{n}\right])$ we have: $\rank(P(X))=\rho(P)$, where $\rho(P)$ denotes the inner rank of $P$ over $\C\left(x_{1},\dots,x_{n}\right)$.
\item The evaluation map $\ev_X$ can be extended from $\C\left[x_1,\dots,x_n\right]$ to $\C\left(x_1,\dots,x_n\right)$.
\end{enumerate}

There are two significant differences between the commutative and the noncommutative case. First, to generate the field of fractions $\C\left(x_1,\dots,x_n\right)$ by a tuple $X$ of commuting operators, it is enough to ask all non-zero polynomials to be invertible (or equivalently, to have no zero divisors) after the evaluation at $X$. Whereas, in the noncommutative case, in Item (i) and (ii) of Theorem \ref{thm:Atiyah-1}, we need to take full matrices of all sizes into account rather than just $1$-dimensional full matrices. Secondly, the inner rank $\rho$ in Item (ii) cannot be taken over polynomials as in Item (iii) of Theorem \ref{thm:Atiyah-1}; it has to be taken over rational functions. This is because if there are more than two variables, the ring $\C\left[x_1,\dots,x_n\right]$ is not a Sylvester domain any more and so the inner rank over polynomials is not preserved when embedding into $\C\left(x_1,\dots,x_n\right)$ (see Section 5.5 and Section 7.5 in \cite{Coh06} for details).
\end{remark}

Note that in Theorem \ref{thm:maximality of Delta} we have proved that a tuple $X=(X_1,\dots,X_n)\in\M^n$ has the property $\Delta(X)=n$ if and only if Item \ref{it:Atiyah-1 i} in the Theorem \ref{thm:Atiyah-1} holds. Hence, in such a situation all the other properties formulated in Theorem \ref{thm:Atiyah-1} must also be satisfied. Thus we get the full set of equivalences from Theorem \ref{thm:intro-main}.

In particular, as explained in Remark \ref{rem:zero_divisors}, we obtain hereby a natural generalization to the case of rational functions of the result of \cite{CS16,MSW17} which said that for any non-zero $P\in\C\langle x_1,\dots,x_n\rangle$, $P(X)$ cannot be a zero divisor in $\M$.

Let us spell out these important consequences as the following corollary.

\begin{corollary}\label{cor:Delta_maximality_implies_Atiyah}
If $X=(X_{1},\dots,X_{n})$ is a tuple of random variables in some tracial $W^{\ast}$-probability space $(\M,\tau)$ with $\Delta(X)=n$, then
\begin{enumerate}
\item for any non-zero rational function $r\in \C\plangle x_1,\dots,x_n\prangle$, $r(X)$ is well-defined and invertible as an affiliated unbounded operator, and thus $r(X)$ is not zero and has no zero divisors;
\item for any $N\in\N$ and any $N\times N$ matrix $P$ over $\C\left\langle x_{1},\dots,x_{n}\right\rangle$, we have
\[\rank(P(X))=\rho(P),\] and in particular, if $P$ is full, then $P(X)$ is invertible over the affiliated unbounded operators and has no zero divisors.
\end{enumerate}
\end{corollary}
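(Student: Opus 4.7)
The plan is to deduce the corollary as a direct synthesis of Theorem \ref{thm:maximality of Delta} and Theorem \ref{thm:Atiyah-1}: the only work is to notice that the hypothesis $\Delta(X)=n$ unlocks the entire chain of equivalences already established. Concretely, I would first observe that $\Delta(X)=n$ is, by Theorem \ref{thm:maximality of Delta}, equivalent to item (iii) there, which coincides verbatim with item \ref{it:Atiyah-1 i} of Theorem \ref{thm:Atiyah-1}. Hence all four characterizations of Theorem \ref{thm:Atiyah-1} are available for the tuple $X$, and the two clauses of the corollary correspond to two different items from that list.

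For part (i), I would invoke item \ref{it:Atiyah-1_iv} of Theorem \ref{thm:Atiyah-1}, which supplies an injective homomorphism $\Ev_X:\C\plangle x_1,\dots,x_n\prangle\to\A$ extending $\ev_X$. Injectivity gives $r(X)=\Ev_X(r)\neq 0$ for any non-zero $r$; applying the homomorphism property to the identities $r\cdot r^{-1}=r^{-1}\cdot r=1$ in the free field yields that $r(X)$ is invertible in $\A$ with inverse $\Ev_X(r^{-1})$. The zero divisor statement is then the standard argument recorded in Remark \ref{rem:zero_divisors}: if $r(X)w=0$ in $\A$, left-multiplying by $r(X)^{-1}$ forces $w=0$, and symmetrically on the right.

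For part (ii), I would directly apply item \ref{it:Atiyah-1_iii} of Theorem \ref{thm:Atiyah-1} to conclude $\rank(P(X))=\rho(P)$ for every $P\in M_N(\C\langle x_1,\dots,x_n\rangle)$. If in addition $P$ is full, then $\rho(P)=N$, so $\rank(P(X))=N$, and Lemma \ref{invertible and rank} upgrades this to invertibility of $P(X)$ in $M_N(\A)$. The zero divisor statement in $M_N(\A)$ then follows by the same multiplication-by-the-inverse argument as in part (i), applied within the tracial $W^\ast$-probability space $(M_N(\M),\tr_N\circ\tau^{(N)})$ whose affiliated $\ast$-algebra is precisely $M_N(\A)$. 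There is no genuine obstacle: all content lies in the equivalences already proved, and the corollary is simply a convenient repackaging of their consequences.
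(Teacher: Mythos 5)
Your proposal is correct and matches the paper's own route: combine Theorem \ref{thm:maximality of Delta} (to pass from $\Delta(X)=n$ to item \ref{it:Atiyah-1 i} of Theorem \ref{thm:Atiyah-1}) with the equivalences of Theorem \ref{thm:Atiyah-1} and the zero-divisor argument recorded in Remark \ref{rem:zero_divisors}. The only small imprecision is the claim that item (iii) of Theorem \ref{thm:maximality of Delta} ``coincides verbatim'' with item \ref{it:Atiyah-1 i} of Theorem \ref{thm:Atiyah-1}: the former asserts $\ker A(X)=\{0\}$ while the latter asserts invertibility of $A(X)$ in $M_N(\A)$, and these are equivalent (not literally identical) via the trace and polar-decomposition argument around Lemma \ref{lem:kernels unbdd} --- a bridge the paper itself makes explicit in the paragraph preceding Theorem \ref{thm:maximality of Delta}.
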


In \cite{CS16,MSW17}, $X$ is assumed to be a tuple of selfadjoint random variables, the attempt to exclude zero divisors among polynomial evaluations was motivated by the question whether atoms can appear in the analytic distributions of evaluations of non-constant selfadjoint polynomials. Recall that the \emph{analytic distribution $\mu_X$} of a normal random variable $X$ in $(\M,\tau)$ is defined as the unique Borel probability measure $\mu_X$ on $\C$ that satisfies $\int_\C z^k \overline{z}^l\, d\mu_X(t) = \tau(X^k (X^\ast)^l)$ for all $k,l\in\N_0$. In fact, now we can also exclude atoms for the analytic distribution of evaluations of non-constant rational functions $r$ that are selfadjoint in the sense that $r^\ast = r$ holds with respect to the involution $\ast$ introduced in Remark \ref{rem:free_field_involution}. We record this observation in the following corollary.

\begin{corollary}\label{cor:rational_atoms}
Let $X=(X_{1},\dots,X_{n})$ be a tuple of selfadjoint random variables in some tracial $W^{\ast}$-probability space $(\M,\tau)$ with the property that $\Delta(X)=n$. Then, for every non-constant selfadjoint rational function $r\in\C\plangle x_1,\dots,x_n\prangle$, the evaluation $r(X)$ is a selfadjoint unbounded operator affiliated to $\M$ and its analytic distribution $\mu_{r(X)}$ has no atoms.
\end{corollary}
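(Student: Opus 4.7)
The plan is to reduce the two claims to Corollary \ref{cor:Delta_maximality_implies_Atiyah}(i) combined with the $*$-compatibility of the extended evaluation noted in Remark \ref{rem:free_field_involution}. Since $\Delta(X)=n$, that corollary already guarantees that $r(X)\in\A$ is well-defined and, for any non-zero $s\in\C\plangle x_1,\dots,x_n\prangle$, that $s(X)$ is invertible as an affiliated unbounded operator. Because $X$ is selfadjoint, Remark \ref{rem:free_field_involution} upgrades $\Ev_X$ to a $*$-homomorphism, so the hypothesis $r^\ast=r$ immediately yields $r(X)^\ast=\Ev_X(r^\ast)=\Ev_X(r)=r(X)$; thus $r(X)$ is a selfadjoint unbounded operator affiliated to $\M$.

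For the absence of atoms I would argue pointwise: since $r(X)$ is selfadjoint, $\mu_{r(X)}$ is supported on $\R$, so it suffices to show $\mu_{r(X)}(\{\lambda\})=0$ for every $\lambda\in\R$. By the spectral theorem applied to the selfadjoint affiliated operator $r(X)$, the spectral projection at $\{\lambda\}$ equals the orthogonal projection onto $\ker(r(X)-\lambda\cdot 1)$, so $\mu_{r(X)}(\{\lambda\})=\tau(p_{\ker(r(X)-\lambda\cdot 1)})$. Now, because $r$ is non-constant, $r-\lambda$ is a non-zero element of the free field, and since $\Ev_X$ extends $\ev_X$ and is linear we have $\Ev_X(r-\lambda)=r(X)-\lambda\cdot 1$. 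Corollary \ref{cor:Delta_maximality_implies_Atiyah}(i) therefore makes $r(X)-\lambda\cdot 1$ invertible in $\A$, which forces $\ker(r(X)-\lambda\cdot 1)=\{0\}$ and hence $\mu_{r(X)}(\{\lambda\})=0$.

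There is no real obstacle here, since the heavy lifting was already done in Theorem \ref{thm:Atiyah-1} and packaged into the corollary; the only conceptual point to double-check is that ``atom at $\lambda$'' indeed translates to ``non-trivial kernel of $r(X)-\lambda\cdot 1$'' in the affiliated-operator setting, which is immediate from the spectral calculus together with Lemma \ref{lem:kernels unbdd}. Everything else is a short chain of formal consequences of the $*$-extension $\Ev_X:\C\plangle x_1,\dots,x_n\prangle\to\A$.
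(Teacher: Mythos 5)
Your proof is correct and follows essentially the same route as the paper's: both reduce the absence of atoms to the invertibility of $r(X)-\lambda\1$ supplied by Corollary \ref{cor:Delta_maximality_implies_Atiyah}(i), via the identification of the spectral projection at $\{\lambda\}$ with the kernel projection of $r(X)-\lambda\1$. The only cosmetic difference is that the paper argues by contradiction while you argue directly, and you spell out the selfadjointness of $r(X)$ via Remark \ref{rem:free_field_involution} a bit more explicitly than the paper does.
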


Recall that for any selfadjoint unbounded operator $Y\in\A$, its \emph{analytic distribution $\mu_Y$} is defined as the Borel probability measure on $\R$ that is obtained by $\mu_Y = \tau \circ E_Y$, where $E_Y$ denotes the resolution of identity associated to $Y$; this is well-defined, since $E_Y$ takes its values in $\M$ as $Y$ is a selfadjoint operator affiliated to $\M$, and extends the concept of analytic distributions for bounded operators that we have used before.

\begin{proof}[Proof of Corollary \ref{cor:rational_atoms}]
Assume that $r$ is a non-constant selfadjoint rational function for which $\mu_{r(X)}$ has an atom, say at $s\in\R$. Then $p := p_{\ker(r(X)-s)} = E_{r(x)}(\{s\})$ is a projection in $\M$, which is non-zero as $\tau(p) = \mu_{r(X)}(\{s\}) \neq 0$ and satisfies $(r(X)-s)p=0$. This, however, contradicts Item (i) in Corollary \ref{cor:Delta_maximality_implies_Atiyah}, because $r-s$ is non-zero in $\C\plangle x_1,\dots,x_n\prangle$ as $r$ is non-constant.
\end{proof}

\subsection{Regularity of matrices with polynomial entries}\label{subsec:regularity_polynomials_matrices}

With Corollary \ref{cor:rational_atoms}, we generalized the results of \cite{CS16,MSW17} about absence of atoms from noncommutative polynomials to the case of noncommutative rational functions. This section is devoted to another possible extension of those results, namely from noncommutative polynomials to square matrices thereof. This is in the spirit of \cite{SS15}, but weakens their assumption that the involved noncommutative random variables are freely independent with non-atomic individual analytic distributions to the maximality of $\Delta$.

To be more precise, we will study the point spectrum of operators of the form $\X = P(X)$, where $P$ is square matrix over $\C\langle x_1,\dots,x_n\rangle$, say of size $N\times N$, and $X=(X_1,\dots,X_n)$ is a tuple of (not necessarily selfadjoint) operators in $(\M,\tau)$ with the property that $\Delta(X)=n$. For selfadjoint $\X$, this will allow us to locate the position and to specify the size of atoms in the analytic distribution $\mu_\X$ of $\X$ by purely algebraic quantities associated to $P$.

Recall that the point spectrum $\sigma_p(\X)$ of $\X$, seen as an operator in $B(L^2(\M,\tau))$, is defined as the set of all $\lambda\in\C$ for which $\ker(\X - \lambda \1_N) \neq \{0\}$. Alternatively, we can view $\X$ as an element in the $\ast$-algebra $M_N(\A)$ for the $\ast$-algebra of all closed and densely defined linear operators affiliated with $(\M,\tau)$; therefore, it is natural to consider the set $\sigma_\A(\X)$ of all $\lambda\in\C$ for which $\X - \lambda \1_N$ fails to be invertible in $M_N(\A)$. Using \eqref{eq:rank_kernel} and Lemma \ref{invertible and rank}, we see that both notions are equivalent; in fact, we have that
\begin{equation}\label{eq:analytic_spectra}
\sigma_p(\X) = \big\{\lambda \in \C \bigm| \rank(\X - \lambda \1_N) < N \big\} = \sigma_\A(\X).
\end{equation}

As we announced above, we show with the following theorem that, due to the assumption $\Delta(X)=n$, these ``analytic spectra'' $\sigma_p(\X) = \sigma_\A(\X)$ of $\X$ further agree with the ``algebraic spectrum'' $\sigma^\full(P)$ of $P$.

\begin{theorem}\label{thm:central eigenvalues and spectrum}
Suppose that $X=(X_1,\dots,X_n)$ is a tuple of elements in a tracial $W^\ast$-probability space $(\M,\tau)$ that satisfies $\Delta(X)=n$ and let any $P\in M_N(\C\langle x_1,\dots,x_n\rangle)$ be given. Then
\begin{equation}\label{eq:position of atoms}
\sigma^\full(P) = \sigma_p(P(X));
\end{equation}
moreover, we have that
\begin{equation}\label{eq:rank of atoms}
\rho(P - \lambda \1_N) = \rank(P(X) - \lambda \1_N) \qquad \text{for all $\lambda\in\C$}.
\end{equation}
\end{theorem}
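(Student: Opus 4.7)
The plan is to observe that the theorem is essentially an immediate corollary of the equivalences already established in Theorems \ref{thm:maximality of Delta} and \ref{thm:Atiyah-1}, and that the first assertion follows from the second by pure unpacking of definitions. So I would first prove \eqref{eq:rank of atoms} and then deduce \eqref{eq:position of atoms} as a formal consequence.

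For \eqref{eq:rank of atoms}, the key point is that the hypothesis $\Delta(X)=n$ activates the full machinery of Section \ref{sec:affiliated_operators_and_Atiyah}. Indeed, by Theorem \ref{thm:maximality of Delta} the assumption $\Delta(X)=n$ is equivalent to the triviality of $\ker A(X)$ for every linear full matrix $A$, which by Lemma \ref{invertible and rank} (or rather the discussion following Lemma \ref{lem:kernels unbdd}) is the same as saying that condition \ref{it:Atiyah-1 i} of Theorem \ref{thm:Atiyah-1} holds. Theorem \ref{thm:Atiyah-1} then upgrades this to its item \ref{it:Atiyah-1_iii}, namely $\rank(Q(X))=\rho(Q)$ for every $Q\in M_{N}(\C\langle x_{1},\dots,x_{n}\rangle)$. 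Now it suffices to apply this to the particular matrix $Q = P-\lambda\1_N$, which belongs to $M_{N}(\C\langle x_{1},\dots,x_{n}\rangle)$ since $\lambda \1_N$ is a scalar matrix. Evaluation at $X$ gives $Q(X) = P(X) - \lambda \1_N$, so
\[
\rho(P-\lambda\1_N) \;=\; \rank\bigl(Q(X)\bigr) \;=\; \rank\bigl(P(X)-\lambda\1_N\bigr),
\]
which is \eqref{eq:rank of atoms}.

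For \eqref{eq:position of atoms}, I would simply compare the two spectra term by term. By Definition \ref{def:central eigenvalue} one has $\sigma^{\full}(P) = \{\lambda\in\C : \rho(P-\lambda\1_N)<N\}$, while by \eqref{eq:analytic_spectra} one has $\sigma_p(P(X)) = \{\lambda\in\C : \rank(P(X)-\lambda\1_N)<N\}$. The equality \eqref{eq:rank of atoms}, which was just proved, shows that for each $\lambda\in\C$ the strict inequality $\rho(P-\lambda\1_N)<N$ holds if and only if $\rank(P(X)-\lambda\1_N)<N$. Hence the two sets coincide.

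There is essentially no obstacle here; the only thing to be a little careful about is that $P-\lambda\1_N$ is legitimately a matrix of noncommutative polynomials (so that the rank equality from Theorem \ref{thm:Atiyah-1}\ref{it:Atiyah-1_iii} really does apply), and that the point spectrum description via ranks uses the trace property of $\tau^{(N)}$ and the finiteness of $(\M,\tau)$ as recorded in Lemma \ref{lem:kernels unbdd} and Lemma \ref{invertible and rank}. Both are straightforward.
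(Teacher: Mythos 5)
Your proof is correct and follows essentially the same route as the paper: the paper also derives \eqref{eq:rank of atoms} by applying the rank identity $\rank(Q(X))=\rho(Q)$ (packaged there as Corollary \ref{cor:Delta_maximality_implies_Atiyah}, which you simply unpack via Theorem \ref{thm:maximality of Delta} and Theorem \ref{thm:Atiyah-1}\ref{it:Atiyah-1_iii}) to $Q=P-\lambda\1_N$, and then reads off \eqref{eq:position of atoms} from the definition of $\sigma^{\full}$ together with \eqref{eq:analytic_spectra}.
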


Note that in general, for a matrix $P\in M_N(\C\langle x_1,\dots,x_n\rangle)$ and any tuple $X$ of operators in $\M$, we have $\sigma^\full(P) \subseteq \sigma^\full_\A(P(X)) \subseteq \sigma_\A(P(X))$, where $\sigma^\full_\A(P(X))$ is the set of all central eigenvalues (see Definition \ref{def:central eigenvalue}); these inclusions follow by the fact that $\rank(P(X)) \leq \rho_\A(P(X)) \leq \rho(P)$, which was established in the proof of Theorem \ref{thm:Atiyah-1}. Theorem \ref{thm:central eigenvalues and spectrum} thus provides a criterion for the equality of all these spectra and rank functions.

\begin{proof}[Proof of Theorem \ref{thm:central eigenvalues and spectrum}]
For every $\lambda\in\C$, we infer from Corollary \ref{cor:Delta_maximality_implies_Atiyah} by applying it to $P-\lambda\1_N$ that \eqref{eq:rank of atoms} holds. With the help of \eqref{eq:analytic_spectra}, we deduce from \eqref{eq:rank of atoms} that the sets $\sigma^\full(P)$ and $\sigma_\A(P(X))$ agree, which verifies the assertion \eqref{eq:position of atoms}.
\end{proof}

Theorem \ref{thm:central eigenvalues and spectrum} determines properties of certain analytic distributions in terms of purely algebraic conditions. Precisely, we have the following corollary.

\begin{corollary}\label{cor:atoms}
Let $X=(X_1,\dots,X_n)$ be a tuple of elements in a tracial $W^\ast$-probability space $(\M,\tau)$ that satisfies $\Delta(X)=n$ and let $P\in M_N(\C\langle x_1,\dots,x_n\rangle)$ be given such that the noncommutative random variable $\X := P(X)$ that lives in the tracial $W^\ast$-probability space $(M_N(\M),\tr_N \circ \tau^{(N)})$ is normal. Then the analytic distribution $\mu_\X$ of $\X$ has atoms precisely at the points in $\sigma^\full(P)$; in fact, we have that
\[
\mu_\X(\{\lambda\}) = \frac{1}{N}\big(N - \rho(P - \lambda \1_N)\big) \qquad \text{for each $\lambda\in\C$}.
\]
\end{corollary}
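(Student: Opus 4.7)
The plan is to derive the formula for $\mu_\X(\{\lambda\})$ directly from the rank equality \eqref{eq:rank of atoms} in Theorem \ref{thm:central eigenvalues and spectrum} together with the spectral theorem for the normal operator $\X$, and then read off the characterization of the atomic support as $\sigma^\full(P)$ straight from Definition \ref{def:central eigenvalue}. The hypothesis $\Delta(X)=n$ will enter only via Theorem \ref{thm:central eigenvalues and spectrum}; the remaining work amounts to tracking the normalization of the involved traces.

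First, since $\X = P(X)$ is normal in the tracial $W^\ast$-probability space $(M_N(\M),\tr_N \circ \tau^{(N)})$, its analytic distribution $\mu_\X$ is by definition the pushforward of $\tr_N \circ \tau^{(N)}$ under the spectral resolution $E_\X$ of $\X$. In particular, for each $\lambda\in\C$, the mass of the point $\lambda$ is
\[
\mu_\X(\{\lambda\}) = (\tr_N \circ \tau^{(N)})\bigl(E_\X(\{\lambda\})\bigr),
\]
and the spectral projection $E_\X(\{\lambda\})$ coincides with the kernel projection $p_{\ker(\X-\lambda\1_N)}$.

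Second, applying the rank-kernel identity \eqref{eq:rank_kernel} in the amplified algebra $M_N(\A)$ to the element $\X-\lambda\1_N$, and using that $\Tr_N = N\,\tr_N$, we obtain
\[
\rank(\X-\lambda\1_N) = N - (\Tr_N\circ\tau^{(N)})\bigl(p_{\ker(\X-\lambda\1_N)}\bigr) = N - N\,\mu_\X(\{\lambda\}).
\]
By \eqref{eq:rank of atoms}, the hypothesis $\Delta(X)=n$ yields $\rank(P(X)-\lambda\1_N) = \rho(P-\lambda\1_N)$; combining this with the previous identity gives the claimed equality
\[
\mu_\X(\{\lambda\}) = \frac{1}{N}\bigl(N - \rho(P-\lambda\1_N)\bigr).
\]

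Finally, the atoms of $\mu_\X$ are precisely those $\lambda\in\C$ for which $\mu_\X(\{\lambda\})>0$, which by the displayed formula happens exactly when $\rho(P-\lambda\1_N)<N$; by Definition \ref{def:central eigenvalue} this condition characterizes the set $\sigma^\full(P)$ of central eigenvalues of $P$. There is no genuine obstacle in this argument since Theorem \ref{thm:central eigenvalues and spectrum} is already in hand; the only point requiring attention is the normalization factor $N$ between $\Tr_N$ and $\tr_N$ in passing from the analytic rank of $\X-\lambda\1_N$ to the tracial mass of the spectral projection.
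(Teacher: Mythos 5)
Your proof is correct and follows essentially the same route as the paper: both use the rank--kernel identity \eqref{eq:rank_kernel} to express $\mu_\X(\{\lambda\})$ in terms of $\rank(\X-\lambda\1_N)$ and then apply \eqref{eq:rank of atoms} from Theorem \ref{thm:central eigenvalues and spectrum} to replace the analytic rank by the inner rank. The only difference is that you spell out the intermediate steps (spectral projection equals kernel projection, the $\Tr_N=N\,\tr_N$ normalization, and the unwinding of Definition \ref{def:central eigenvalue}) that the paper leaves implicit.
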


\begin{proof}
Since \eqref{eq:rank_kernel} yields that for every normal operator $\X$ in $M_N(\M)$
\[
\mu_\X(\{\lambda\}) = \tr_N\circ\tau^{(N)}\big(p_{\ker(\X-\lambda \1_N)}\big) = \frac{1}{N}\big(N - \rank(\X-\lambda \1_N)\big)
\]
holds for each $\lambda\in\C$ and hence $\sigma_p(\X) = \{\lambda\in\C \mid \mu_\X(\{\lambda\})>0 \}$, the assertion of the corollary is indeed an immediate consequence of Theorem \ref{thm:central eigenvalues and spectrum}.
\end{proof}

\begin{remark}\label{rem:atoms}
In the situation of Corollary \ref{cor:atoms}, if we suppose in addition that $\X$ is selfadjoint, we can extract some information about the non-microstates free entropy dimension $\delta^\ast(\X)$.
Recall that for one selfadjoint random variable $Y$ its free entropy dimension $\delta^\ast(Y)$ is determined by the sizes of all atoms of $\mu_Y$ (see also \eqref{eq:entropy_dimension_single_operator}).
Therefore, the formula \eqref{eq:rank of atoms} given in Theorem \ref{thm:central eigenvalues and spectrum} allows to express $\delta^\ast(\X)$ of the variable $\X = P(X)$ in terms of purely algebraic quantities associated to $P$; more precisely, we have that
$$\delta^\ast(\X) = 1 - \sum_{\lambda \in \sigma_\A(\X)} \mu_\X(\{\lambda\})^2 = 1 - \frac{1}{N^2} \sum_{\lambda \in \sigma^\full(P)} \big(N - \rho(P-\lambda\1_N)\big)^2.$$
Note that $\rho(P-\lambda\1_N)\in\N$, so that
\[
\delta^\ast(\X)\in\big\{\frac{k}{N^2} \bigm| k=0,\dots,N^2 \big\}.
\]
\end{remark}

On the other hand, Theorem \ref{thm:central eigenvalues and spectrum} tells us, loosely speaking, that each tuple $X=(X_1,\dots,X_n)$ of operators in a tracial $W^\ast$-probability space $(\M,\tau)$ satisfying the condition $\Delta(X)=n$ provides a model based on which the algebraic quantities $\rho(P)$ and $\sigma^\full(P)$ associated to matrices $P\in M_N(\C\langle x_1,\dots,x_n\rangle)$ can be computed by analytic means via the operator $P(X)$. We want to give an example for how this bridge can be used.

To be more precise, we can deduce Proposition \ref{prop:finite spectrum}, which is an intricate algebraic fact, from Theorem \ref{thm:central eigenvalues and spectrum}. Recall that Proposition \ref{prop:finite spectrum} says that the cardinality of $\sigma^\full(P)$ is at most $N$ for a matrix $P\in M_N(\C\langle x_1,\dots,x_n\rangle)$. So, if we have $\Delta(X)=n$ for a tuple $X=(X_1,\dots,X_n)$, then with the help of Theorem \ref{thm:central eigenvalues and spectrum}, Proposition \ref{prop:finite spectrum} follows once we have shown that the cardinality of $\sigma_p(P(X))$ is at most $N$; this is the content of the following proposition.

\begin{proposition}\label{prop:finite atoms}
Suppose that $X=(X_1,\dots,X_n)$ is a tuple of elements in a tracial $W^\ast$-probability space $(\M,\tau)$ that satisfies $\Delta(X)=n$ and let any $P\in M_N(\C\langle x_1,\dots,x_n\rangle)$ be given. Then $\sigma_\A(P(X))$ has at most $N$ elements.
\end{proposition}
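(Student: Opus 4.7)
The plan is to show that each point of $\sigma_\A(P(X))$ contributes at least $\tfrac{1}{N}$ to the normalized trace of a certain orthogonal projection in $M_N(\M)$, while the total contribution is bounded by $1$; the cardinality bound is then immediate.

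The first step is to bound each contribution from below using the hypothesis $\Delta(X)=n$. For each $\lambda\in\sigma_\A(P(X))$, write $p_\lambda:=p_{\ker(P(X)-\lambda\1_N)}$, which belongs to $M_N(\M)$ by Lemma \ref{lem:kernels unbdd}. Combining that lemma with Corollary \ref{cor:Delta_maximality_implies_Atiyah} and Definition \ref{def:rank} gives
\[
(\Tr_N\circ\tau^{(N)})(p_\lambda) \,=\, N - \rank(P(X) - \lambda\1_N) \,=\, N - \rho(P - \lambda\1_N).
\]
Since $\rho(P-\lambda\1_N)$ is a non-negative integer that is strictly less than $N$ (the latter because $\lambda\in\sigma_\A(P(X))$ forces $P(X)-\lambda\1_N$ to be non-invertible in $M_N(\A)$ and hence of analytic rank $<N$ by Lemma \ref{invertible and rank}), we conclude that $(\Tr_N\circ\tau^{(N)})(p_\lambda)\geq 1$, i.e.\ $(\tr_N\circ\tau^{(N)})(p_\lambda)\geq \tfrac{1}{N}$.

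The second step is to establish, for any finite collection of distinct $\lambda_1,\dots,\lambda_k\in\sigma_\A(P(X))$, the additivity
\[
(\tr_N\circ\tau^{(N)})\Bigl(\,\bigvee_{i=1}^k p_{\lambda_i}\,\Bigr) \,=\, \sum_{i=1}^k (\tr_N\circ\tau^{(N)})(p_{\lambda_i}).
\]
Writing $K_i:=\ker(P(X)-\lambda_i\1_N)$ and applying Kaplansky's parallelogram identity iteratively inside the finite von Neumann algebra $(M_N(\M),\tr_N\circ\tau^{(N)})$, this reduces to showing that $\overline{K_1+\cdots+K_{i-1}}\cap K_i = \{0\}$ for each $i=2,\dots,k$. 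So suppose $v\in K_i$ is the $L^2$-limit of a sequence $v_n\in K_1+\cdots+K_{i-1}$. Since $P(X)\in M_N(\M)$ is bounded, so is $B:=\prod_{j<i}(P(X)-\lambda_j\1_N)\in M_N(\M)$; the factors of $B$ commute, so $B$ annihilates each $K_j$ with $j<i$, whence $Bv_n=0$ for every $n$ and by continuity $Bv=0$. On the other hand $P(X)v=\lambda_i v$ gives $Bv = \prod_{j<i}(\lambda_i-\lambda_j)\,v$, so distinctness of the $\lambda_j$ forces $v=0$.

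Combining the two steps with the obvious bound $(\tr_N\circ\tau^{(N)})(\bigvee_{i=1}^k p_{\lambda_i}) \leq (\tr_N\circ\tau^{(N)})(\1_N)=1$ yields $\tfrac{k}{N}\leq 1$, hence $k\leq N$. The only real subtlety — and the one place where boundedness of $P(X)$ (as opposed to mere affiliation) is genuinely used — is the closure argument in the second step: the algebraic direct sum $K_1+\cdots+K_{i-1}$ need not be closed in $L^2(\M,\tau)^N$ when the kernel subspaces are infinite-dimensional, and so the passage from linear independence of the $K_j$ to triviality of the intersection of $\overline{K_1+\cdots+K_{i-1}}$ with $K_i$ really requires the continuity argument on the bounded polynomial $B$ in $P(X)$.
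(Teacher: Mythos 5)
Your proof is correct and follows the paper's overall strategy: show $(\tr_N\circ\tau^{(N)})(p_\lambda)\geq\tfrac1N$ for each $\lambda\in\sigma_\A(P(X))$, then establish that the kernel projections for distinct eigenvalues are ``independent'' so their normalized traces sum to at most $1$. The one place where you take a genuinely different route is the crucial intersection triviality $\overline{K_1+\cdots+K_{i-1}}\cap K_i=\{0\}$. The paper proves this by a direct inductive manipulation of approximating sequences: writing $\xi=\lim_n(\xi_1^{(n)}+\cdots+\xi_i^{(n)})$, applying $P(X)$, subtracting a $\lambda_1$-multiple, and reducing to the induction hypothesis for one fewer eigenspace. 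You instead use the annihilating-polynomial trick: $B:=\prod_{j<i}(P(X)-\lambda_j\1_N)$ is a bounded operator that kills each $K_j$ for $j<i$ (since the commuting factors can be reordered), hence by continuity kills the closure of their sum, yet acts on $K_i$ as the nonzero scalar $\prod_{j<i}(\lambda_i-\lambda_j)$. This is cleaner, more transparent, and isolates exactly where boundedness of $P(X)$ (as opposed to mere affiliation) enters, namely in passing from $B(K_1+\cdots+K_{i-1})=0$ to $B\,\overline{K_1+\cdots+K_{i-1}}=0$. The paper's argument uses the same continuity implicitly; yours makes it explicit. Both are correct; yours is arguably the nicer proof of the auxiliary lemma.
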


In order to prove this we need the following lemma.

\begin{lemma}
Let $P$ be an operator in a tracial $W^*$-probability space $(\M,\tau)$. Assume that we have distinct eigenvalues $\lambda_1,\cdots,\lambda_k$ of $P$ on $L^2(\M,\tau)$. Denote by $p_i\in \M$ the orthogonal projection onto the eigenspace $\text{\rm kernel}(P-\lambda_i 1)$ of $\lambda_i$. Then we have
\[
\sum_{i=1}^k\tau(p_{i})=\tau(\bigvee_{i=1}^k p_{i}).
\]
\end{lemma}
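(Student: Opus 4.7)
My plan is to induct on $k$. The base case $k=1$ is trivial, and the inductive step will use the Kaplansky-type parallelogram identity for the trace on a finite von Neumann algebra, namely
\[
\tau(p \vee q) + \tau(p \wedge q) = \tau(p) + \tau(q) \qquad \text{for all projections $p,q\in\M$,}
\]
which follows from the Murray--von Neumann equivalence $p\vee q - q \sim p - p\wedge q$ and the trace property of $\tau$. Setting $q := \bigvee_{i<k} p_i$, the inductive hypothesis gives $\tau(q) = \sum_{i<k}\tau(p_i)$, so the identity above reduces the desired equality to the single statement
\[
\tau(q\wedge p_k) = 0,
\]
and in fact to showing $q\wedge p_k = 0$.

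The projection $q$ has range $\overline{V_1 + \cdots + V_{k-1}}$ in $L^2(\M,\tau)$, where $V_i := \ker(P - \lambda_i \mathbf{1})$, while $p_k$ has range $V_k$. So I must show
\[
V_k \cap \overline{V_1 + \cdots + V_{k-1}} = \{0\}.
\]
The key tool is the bounded operator $R_k := \prod_{i\neq k} (P - \lambda_i \mathbf{1}) \in \M$. By direct computation, $R_k$ annihilates $V_j$ for each $j\neq k$ (since the factor $(P - \lambda_j\mathbf{1})$ kills $V_j$), and acts on $V_k$ as multiplication by the nonzero scalar $c := \prod_{i\neq k}(\lambda_k - \lambda_i)$. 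Hence $R_k$ vanishes on the algebraic sum $V_1 + \cdots + V_{k-1}$, and by continuity (as $R_k$ is bounded) on its closure. If $v$ lies in the intersection $V_k \cap \overline{V_1+\cdots+V_{k-1}}$, then simultaneously $R_k v = cv$ and $R_k v = 0$, forcing $v=0$.

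I don't anticipate a serious obstacle here: the only potentially subtle point is the passage from the algebraic fact that distinct eigenspaces are linearly independent to the analytic statement $q \wedge p_k = 0$ (which is not automatic in infinite dimensions, since sums of closed subspaces need not be closed). The polynomial $R_k$ handles exactly this, by producing a single bounded operator that separates $V_k$ from the closed span of the other eigenspaces. Once $q\wedge p_k=0$ is established, the Kaplansky identity plus induction gives the result.
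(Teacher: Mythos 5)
Your proof is correct, and the overall architecture -- induction on $k$, the Kaplansky identity $\tau(p\vee q)+\tau(p\wedge q)=\tau(p)+\tau(q)$, reduction to the claim $p_k\wedge\bigl(\bigvee_{i<k}p_i\bigr)=0$, i.e.\ $V_k\cap\overline{V_1+\cdots+V_{k-1}}=\{0\}$ -- matches the paper's. Where you diverge is in how you establish this last intersection claim. The paper argues sequentially: take $\xi\in V_k\cap\overline{V_1+\cdots+V_{k-1}}$, write it as a limit $\xi=\lim_n(\xi_1^{(n)}+\cdots+\xi_{k-1}^{(n)})$, apply $P$ to both sides, and after subtracting $\lambda_1\xi$ obtain a vector in $\overline{V_2+\cdots+V_{k-1}}\cap V_k$, which is zero by a nested induction over the number of eigenspaces being summed. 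You instead introduce the single bounded operator $R_k=\prod_{i\neq k}(P-\lambda_i\mathbf{1})\in\M$, which annihilates the algebraic sum $V_1+\cdots+V_{k-1}$ and (by boundedness of $R_k$, here using that $P\in\M$) its closure, while acting as a nonzero scalar on $V_k$; the intersection is then immediately zero. This is the standard ``separating polynomial'' device from linear algebra (primary decomposition), and it sidesteps the paper's nested induction entirely: the passage from linear independence of eigenspaces to triviality of the intersection with the closure is handled by one clean application of continuity rather than an iterated limit argument. Both proofs are essentially equally elementary, but yours makes the continuity step more transparent and removes the need to carefully track the inner induction over subsets of eigenvalues.
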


\begin{proof}
This statement is probably well-known, but since we have not been able to localize it in the literature, we provide here a short proof. 

The statement is a consequence of the following standard result for projections $p$ and $q$ in finite von Neumann algebras (see, e.g., Lemma 2.2.3 in \cite{CDSZ17}): if we have $p\wedge q=0$, then 
$$\tau(p\vee p)=\tau(p)+\tau(p).$$
Since it is easy to check that in our case $p_1\wedge p_2=0$, this gives the statement for $k=2$. The general case follows from this by induction, provided we can show that
$$(p_1\vee p_2\vee \cdots \vee p_i)\wedge p_{i+1}=0$$
for all $i=1,\dots,k-1$.
Denote by $\H_i:=\text{kernel}(P-\lambda_i 1)\subset L^2(\M,\tau)$ the eigenspace corresponding to $\lambda_i$. Then
we have to show that
$$\overline{\H_1+\cdots +\H_i}\cap \H_{i+1}=\{0\}.$$
We do this by induction. For $i=1$, we have for $\xi\in \H_1\cap \H_2$ that 
$$\lambda_1\xi=P\xi=\lambda_2\xi,\qquad\text{thus}\qquad \xi=0.$$
For the induction step consider
$\xi\in\overline{\H_1+\cdots +\H_i}\cap \H_{i+1}$,
thus $\xi\in \H_{i+1}$ and
$$\xi=\lim_{n\to\infty} (\xi_1^{(n)} +\cdots +\xi_i^{(n)}).$$
Thus we have
$$
\lambda_{i+1}\xi=P\xi=\lim_{n\to\infty} (\lambda_1\xi_1^{(n)} +\cdots +\lambda_i\xi_i^{(n)})
=\lambda_1 \xi +\lim_{n\to\infty}[ (\lambda_2-\lambda_1)\xi_2^{(n)}+\cdots +
(\lambda_i-\lambda_1)\xi_i^{(n)}]
$$
This means that the limit
$$\hat\xi:=\lim_{n\to\infty}[ (\lambda_2-\lambda_1)\xi_2^{(n)}+\cdots +
(\lambda_i-\lambda_1)\xi_i^{(n)}]\in \overline{\H_2+\cdots +\H_i}$$
exists and that we have
$$\xi=\frac 1{\lambda_{i+1}-\lambda_1}\hat\xi \in \overline{\H_2+\cdots +\H_i}\cap \H_{i+1}.$$
But the latter intersection consists, by the induction hypothesis, only of $0$.
\end{proof}


\begin{proof}[Proof of Proposition \ref{prop:finite atoms}]
For each $\lambda\in\sigma_\A(P(X))$, we write $p_\lambda:=p_{\ker(P(X)-\lambda\1_N)}$; then we have
\[
\Tr_{N}\circ\tau^{(N)}(p_\lambda) = N - \rank(P(X)-\lambda\1_N) = N - \rho(P-\lambda\1_N)
\]
by combining \eqref{eq:rank_kernel} with Theorem \ref{thm:central eigenvalues and spectrum}. Since for each $\lambda\in\sigma^\full(P)$, $\rho(P)$ is an integer strictly less than $N$, we have
\[
\Tr_{N}\circ\tau^{(N)}(p_\lambda)\geq 1.
\]
Now we consider the finite von Neumann algebra $M_N(M)$ with the trace $\varphi:=\tr_{N}\circ\tau^{(N)}$ instead of the unnormalized one; and then we have $\varphi(p_\lambda)\geq\frac{1}{N}$ for any $\lambda\in\sigma_\A(P(X))$. Suppose that we have distinct ${\lambda_1},\dots,\lambda_k\in\sigma_\A(P(X))$ for some positive integer $k$, then, by the lemma above,
\[
1\geq\varphi(\bigvee_{i=1}^k p_{\lambda_i})=\sum_{i=1}^k\varphi(p_{\lambda_i})\geq \frac{k}{N},
\]
and thus $k\leq N$.
\end{proof}


\subsection{Regularity of matrices with linear entries}\label{subsec:regularity_linear_matrices}

Recently, operator-valued semicircular elements and especially matrix-valued semicircular elements have started to attract much attention, motivated by far reaching applications in random matrix theory. Matrix-valued semicircular elements are noncommutative random variables of the form
$$\bS = b_0 + b_1 S_1 + \dots + b_n S_n$$
with selfadjoint coefficient matrices $b_0,b_1,\dots,b_n \in M_N(\C)$ and a tuple $(S_1,\dots,S_n)$ of freely independent semicircular elements. In some impressive series of publications (see, for instance, \cite{EKYY13,AjEK18,AEK18} and the references collected therein), a deep understanding of the regularity properties of their distributions was gained. These achievements rely on a very detailed analysis of the so-called \emph{Dyson equation}, which is some kind of quadratic equation on $M_N(\C)$ (or, more generally, on von Neumann algebras) for their operator-valued Cauchy transforms that determines in particular their scalar-valued Cauchy transforms and hence their analytic distributions $\mu_{\bS}$.

It turns out that analytic properties of $\mu_{\bS}$ strongly depend on the algebraic properties of the coefficient matrices $b_0,b_1,\dots,b_n$. In \cite{AjEK18}, the associated \emph{self-energy operator} (or \emph{quantum operator} in the terminology of \cite{GGOW16} used in Proposition \ref{prop:rank-decreasing})
$$\cL:\ M_N(\C) \to M_N(\C),\qquad b \mapsto \sum^n_{j=1} b_j b b_j$$
was supposed to be \emph{flat} in the sense that there are constants $c_1,c_2>0$ such that
\begin{equation}\label{eq:flatness}
c_1 \tr_N(b) \1_N \leq \cL(b) \leq c_2 \tr_N(b) \1_N \qquad\text{for all positive semidefinite $b\in M_N(\C)$}. 
\end{equation}
Note that $b_0$, the so-called \emph{bare matrix}, plays a special role and accordingly does not show up in $\cL$. 

It is natural to ask which of the results that were obtained for matrix-valued semicircular elements survive if the Dyson equation is replaced by some other, more general assumption on $(X_1,\dots,X_n)$ and how much one can weaken the rather strong flatness condition. Indeed, the results that we obtained above allow us to deal with more general matrix-valued elements that are of the form
$$\X = b_0 + b_1 X_1 + \dots + b_n X_n,$$
where, on the analytic side, we allow $(X_1,\dots,X_n)$ to be any tuple of (not necessarily selfadjoint) noncommutative random variables satisfying the condition $\Delta(X)=n$; on the algebraic side, we significantly relax the flatness condition by requiring only fullness of the associated linear matrix
$$P = b_0 + b_1 x_1 + \dots + b_n x_n \in M_N(\C\langle x_1,\dots,x_n\rangle).$$

\begin{remark}
Let $P\in M_N(\C\langle x_1,\dots,x_n\rangle)$ be linear, say $P = b_0 + b_1 x_1 + \dots + b_n x_n$. We note that if the associated self-energy operator $\cL$ is supposed to satisfy the lower estimate in \eqref{eq:flatness}, then $P$ must full; in particular, this shows that flatness of $\cL$ is indeed a much stronger requirement than fullness of $P$.
To see this, we observe first that the lower estimate in \eqref{eq:flatness} enforces $\cL$ to be nowhere rank-decreasing, so that, according to Proposition \ref{prop:rank-decreasing}, the homogeneous part $b_1 x_1 + \dots + b_n x_n$ of $P$ and hence (as one sees for instance with the help of Proposition \ref{prop:spectrum of linear matrices}) also $P$ must be full.
\end{remark}

The following theorem summarizes the regularity results for such matrix-valued elements; it follows immediately from Proposition \ref{prop:spectrum of linear matrices} and Theorem \ref{thm:central eigenvalues and spectrum}.

\begin{theorem}
Suppose that $X=(X_1,\dots,X_n)$ is a tuple of (not necessarily selfadjoint) noncommutative random variables in some tracial $W^\ast$-probability space $(\M,\tau)$ that satisfies $\Delta(X)=n$. Consider any linear matrix $P = b_0 + b_1 x_1 + \dots + b_n x_n$ in $M_N(\C\langle x_1,\dots,x_n\rangle)$ for which the noncommutative random variable
\[
\X = P(X) = b_0 + b_1 X_1 + \dots + b_n X_n
\]
in the tracial $W^\ast$-probability space $(M_N(\M),\tr_N \circ \tau^{(N)})$ is normal. Then the analytic distribution $\mu_\X$ of $\X$ has atoms precisely at the points in $\sigma^\full(P)$; in fact, we have that
\[
\mu_\X(\{\lambda\}) = \frac{1}{N}\big(N - \rho(P - \lambda \1_N)\big) \qquad \text{for each $\lambda\in\C$}.
\]
Moreover, the following statements hold true:
\begin{enumerate}
 \item We have that $\sigma_p(\X) \subseteq \sigma(b_0)$, i.e., $\mu_\X$ can have atoms only at eigenvalues of the constant matrix $b_0$.
 \item If the homogeneous part $b_1 x_1 + \cdots + b_n x_n$ of $P$ is full, then $\sigma_p(\X)=\emptyset$, i.e., $\mu_\X$ has no atoms.
\end{enumerate}
\end{theorem}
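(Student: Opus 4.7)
The plan is to reduce the theorem to two previously established facts: Theorem \ref{thm:central eigenvalues and spectrum}, which identifies the analytic point spectrum $\sigma_p(P(X))$ with the algebraic full spectrum $\sigma^\full(P)$ and matches the corresponding ranks whenever $\Delta(X)=n$; and Proposition \ref{prop:spectrum of linear matrices}, which describes $\sigma^\full(A)$ for a linear matrix $A$ in terms of the constant and homogeneous parts. The statement concerns a linear $P$, so the second ingredient extracts the qualitative claims in (i) and (ii), while the first ingredient transports everything to the analytic side of $\X=P(X)$.

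First I would establish the formula for the atoms by invoking Corollary \ref{cor:atoms}, whose hypotheses ($\Delta(X)=n$ and normality of $P(X)$) are exactly those assumed here. This immediately yields
\[
\mu_\X(\{\lambda\}) = \frac{1}{N}\bigl(N - \rho(P - \lambda\1_N)\bigr) \qquad \text{for every $\lambda\in\C$,}
\]
so the atoms of $\mu_\X$ are located precisely at those $\lambda$ with $\rho(P - \lambda\1_N)<N$, i.e., exactly at $\sigma^\full(P)$ in the sense of Definition \ref{def:central eigenvalue}. Equivalently, one may derive this directly from Theorem \ref{thm:central eigenvalues and spectrum} applied to $P - \lambda \1_N \in M_N(\C\langle x_1,\dots,x_n\rangle)$ together with \eqref{eq:rank_kernel} and the fact that, since $\X$ is normal, $\mu_\X(\{\lambda\})$ equals the trace of the kernel projection of $\X - \lambda \1_N$.

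For (i) and (ii), I would observe that $P - \lambda\1_N = (b_0 - \lambda\1_N) + b_1 x_1 + \cdots + b_n x_n$ is still linear, with the same homogeneous part as $P$. Proposition \ref{prop:spectrum of linear matrices}\ref{it:spectrum of linear matrices_1} then yields $\sigma^\full(P)\subseteq\sigma(b_0)$, and combining this with the identification $\sigma_p(\X) = \sigma^\full(P)$ supplied by Theorem \ref{thm:central eigenvalues and spectrum} proves (i). Similarly, if the homogeneous part $b_1 x_1 + \cdots + b_n x_n$ is assumed to be full, Proposition \ref{prop:spectrum of linear matrices}\ref{it:spectrum of linear matrices_2} gives $\sigma^\full(P) = \emptyset$, so $\sigma_p(\X)=\emptyset$, which is (ii); since $\X$ is normal this is equivalent to $\mu_\X$ having no atoms.

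There is essentially no obstacle here: the theorem is a clean packaging of the general rank/spectrum transfer for tuples with $\Delta(X)=n$ together with the linear-specific structural results of Section \ref{sec:inner rank}. The only small care to take is to verify that Theorem \ref{thm:central eigenvalues and spectrum} is applicable to $P-\lambda\1_N$ for each $\lambda\in\C$, but this is automatic as that matrix still lies in $M_N(\C\langle x_1,\dots,x_n\rangle)$, and that normality is used only to translate $\sigma_p(\X)=\{\lambda:\mu_\X(\{\lambda\})>0\}$.
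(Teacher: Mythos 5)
Your proof is correct and takes essentially the same route as the paper, which simply states that the theorem ``follows immediately from Proposition \ref{prop:spectrum of linear matrices} and Theorem \ref{thm:central eigenvalues and spectrum}.'' Your use of Corollary \ref{cor:atoms} for the atom formula is just a repackaging of the same ingredients.
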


\subsection{Strong Atiyah property}\label{subsec:Atiyah}

In Item \ref{it:Atiyah-1_iii} of Theorem \ref{thm:Atiyah-1}, the equality $\rank(P(X))=\rho(P)$ means in particular that $\rank(P(X))$, which a priori can be any real number between $0$ and $N$, has actually to be an integer, since the algebraic rank $\rho$ is by definition an integer. The question whether the analytic rank of $P(X)$ for an operator tuple $X=(X_1,\dots,X_n)$ is for all polynomial matrices $P$ an integer has received quite some attention and is related with the famous Atiyah conjecture. Whereas the original Atiyah conjecture is in the context of generators of group algebras, Shlyakhtenko and Skoufranis extended this in \cite{SS15} to the general setting of operator tuples with the following definition.

\begin{definition}\label{def:strong_Atiyah_property}
Let $X=(X_{1},\dots,X_{n})$ be a tuple with elements from a tracial $W^{\ast}$-probability space $(\M,\tau)$, and consider the evaluation map $\ev_{X}:\C\left\langle x_{1},\dots,x_{n}\right\rangle \rightarrow\M$. If for any matrix $P\in M_{N}(\C\left\langle x_{1},\dots,x_{n}\right\rangle )$, we have
\[\rank(P(X))\in\N,\]
then we say $X$ has the \emph{strong Atiyah property}.
\end{definition}

The presence of this property is one of the various formulations of the Atiyah conjecture, which arose in the work \cite{Ati74} and asks whether some analytic $L^{2}$-Betti numbers are always rational numbers for certain Riemannian manifolds. In the terminology of Definition \ref{def:strong_Atiyah_property}, Linnell proved in \cite{Lin93} that the generators of the free group (considered as operators in the left regular representation) have the strong Atiyah property. See also \cite[Chapter 10]{Luc02} or \cite{GLSZ00,DLMSY03,PTh11} for more detailed discussions, including some counterexamples, on the Atiyah conjecture for group algebras and related subjects. In the context of free probability, Shlyakhtenko and Skoufranis proved in \cite{SS15} that a tuple of non-atomic, freely independent random variables has the strong Atiyah property. A consequence of our Corollary \ref{cor:Delta_maximality_implies_Atiyah} is the following vast generalization of their result.

\begin{theorem}\label{cor:full_entropy_dimension_implies_Atiyah2}
If $X=(X_{1},\dots,X_{n})$ is a tuple of random variables in some tracial $W^{\ast}$-probability space $(\M,\tau)$ with $\Delta(X)=n$, then $X$ has the strong Atiyah property.
\end{theorem}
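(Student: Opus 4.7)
The plan is to reduce this immediately to Corollary \ref{cor:Delta_maximality_implies_Atiyah}, which does the real work. Since $\Delta(X)=n$, part (ii) of that corollary tells us that for every $N\in\N$ and every $P\in M_N(\C\langle x_1,\dots,x_n\rangle)$ we have the identity
\[
\rank(P(X))=\rho(P).
\]
Hence the analytic rank coincides with the inner rank over the polynomial ring.

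Now by Definition \ref{def:inner-rank_full}, the inner rank $\rho(P)$ is defined as the least \emph{positive integer} $r$ for which $P$ admits a factorization $P=QR$ with $Q\in M_{N,r}(\C\langle x_1,\dots,x_n\rangle)$ and $R\in M_{r,N}(\C\langle x_1,\dots,x_n\rangle)$ (and $\rho(0)=0$). In particular $\rho(P)\in\N$. Combining this with the equality above gives $\rank(P(X))\in\N$, which is exactly the strong Atiyah property from Definition \ref{def:strong_Atiyah_property}.

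There is essentially no obstacle at this stage; the entire content of the theorem has already been packaged into Corollary \ref{cor:Delta_maximality_implies_Atiyah}, whose proof in turn rested on Theorem \ref{thm:Atiyah-1} (establishing $\rank(P(X))=\rho(P)$ from invertibility of linear full matrices) and Theorem \ref{thm:maximality of Delta} (linking $\Delta(X)=n$ to such invertibility). So the proof I would write is a single short paragraph invoking Corollary \ref{cor:Delta_maximality_implies_Atiyah}(ii) together with the integrality of the inner rank.
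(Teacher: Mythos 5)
Your proof is correct and matches the paper's own reasoning exactly: the paper introduces this theorem as "a consequence of our Corollary \ref{cor:Delta_maximality_implies_Atiyah}," and the remark following it points out precisely that $\rank(P(X))=\rho(P)$ forces integrality since the inner rank is an integer by definition. Nothing to add.
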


Note that Item \ref{it:Atiyah-1_iii} of Theorem \ref{thm:Atiyah-1} is actually stronger than the strong Atiyah property: the strong Atiyah property only asks the rank of matrices to be integers, but in Theorem \ref{thm:Atiyah-1}, we ask the rank to be exactly the corresponding inner rank (which is an integer by definition).

As explained in \cite{SS15}, the strong Atiyah property implies that whenever we take a $P\in M_N(\C\langle x_1,\dots,x_n\rangle)$ for which the associated operator $\X=P(X_1,\dots,X_n)$ in $M_N(\M)$ is normal, then the measure of any atom of the spectral distribution of $\X$ is necessarily an integer multiple of $\frac{1}{N}$; that more can be said if the third property in Theorem \ref{thm:Atiyah-1} holds, has been explained in Theorem \ref{thm:central eigenvalues and spectrum} and Remark \ref{rem:atoms}.

In general, however, it is possible that the rank is an integer but is not equal to the inner rank. 
Let us present an example, provided by Ken Dykema and James Pascoe, to show this gap between the strong Atiyah property and the properties in Theorem \ref{thm:Atiyah-1}.

\begin{example}\label{ex:Dykema_Pascoe}
Consider two freely independent semicircular elements, denoted by $X$ and $Y$. By the results from \cite{SS15} (or by our Theorem \ref{cor:full_entropy_dimension_implies_Atiyah2}), they satisfy the strong Atiyah property. Let
\[A=Y^{2},\ B=YXY,\ C=YX^{2}Y,\]
then $A$, $B$, $C$ also have the strong Atiyah property as any polynomial in them can be reduced back to a polynomial in $X$ and $Y$. However, though they do not satisfy any nontrivial polynomial relation, they have a rational relation:
\[BA^{-1}B-C=0\]
in the $\ast$-algebra of affiliated operators. Then definitely they do not satisfy the last property in Theorem \ref{thm:Atiyah-1}; moreover, we can also find some matrix like
\[\begin{pmatrix}A & B\\B & C\end{pmatrix}\]
that has inner rank $2$ if it is viewed as a matrix of three formal variables, but has
\[\rank\begin{pmatrix}A & B\\B & C\end{pmatrix}=\rank\begin{pmatrix}Y^{2} & YXY\\YXY & YX^{2}Y\end{pmatrix}=\rank\begin{pmatrix}1 & X\\X & X^{2}\end{pmatrix}=\rho\begin{pmatrix}1 & x\\x & x^{2}\end{pmatrix}=1.\]
Therefore, $(A,B,C)$ violates all the properties in Theorem \ref{thm:Atiyah-1} though it has the strong Atiyah property.
\end{example}

Nevertheless, by the following list of equivalent properties, we see that the reason for the strong Atiyah property to hold is always the fact that the analytic rank is equal to an inner rank, however over the rational closure (which in general, does not need to be the free field).

\begin{theorem}
\label{thm:Atiyah-2}
Let $(\M,\tau)$ be a tracial $W^{\ast}$-probability space and $\A$ the $\ast$-algebra of affiliated unbounded operators.
For a given tuple $X=(X_{1},\dots,X_{n})$ in $\M^n$, we consider the evaluation map $\ev_{X}:\C\left\langle x_{1},\dots,x_{n}\right\rangle \rightarrow\A$. Let
$\mathcal{R}$ be the rational closure of $\C\left\langle x_{1},\dots,x_{n}\right\rangle $ with respect to this evaluation map. We denote the inner rank over $\mathcal{R}$ by $\rho_{\mathcal{R}}$. Then the following statements are equivalent.
\begin{enumerate}
\item For any $N\in\N$ and any $P\in M_{N}(\C\left\langle x_{1},\dots,x_{n}\right\rangle )$ we have: if $P(X)$ is full over $\mathcal{R}$, then $P(X)\in M_{N}(\A)$ is invertible.
\item For any $N\in\N$ and any $P\in M_{N}(\C\left\langle x_{1},\dots,x_{n}\right\rangle )$ we have: $\rank(P(X))=\rho_{\mathcal{R}}(P(X))$.
\item The rational closure $\mathcal{R}$ of $\C\left\langle X_{1},\dots,X_{n}\right\rangle $ is a division ring.
\item The division closure $\mathcal{D}$ of $\C\left\langle X_{1},\dots,X_{n}\right\rangle $ is a division ring.
\item $X$ has the strong Atiyah property, i.e., for any $N\in\N$ and any $P\in M_{N}(\C\left\langle x_{1},\dots,x_{n}\right\rangle )$ we have that $\rank(P(X))\in\N$.
\end{enumerate}
\end{theorem}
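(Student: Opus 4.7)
The plan is to prove the equivalences through two overlapping cycles: the main loop (i)$\Rightarrow$(iii)$\Rightarrow$(ii)$\Rightarrow$(i), and in parallel (ii)$\Rightarrow$(v)$\Rightarrow$(iii). The equivalence (iii)$\Leftrightarrow$(iv) is immediate from Proposition \ref{prop:division closure}. The routine implications are: (ii)$\Rightarrow$(i), which follows from Lemma \ref{invertible and rank} applied to $P(X)$ when $\rho_{\mathcal{R}}(P(X))=N$; (ii)$\Rightarrow$(v), which is trivial as inner ranks are integers by definition; and (iii)$\Rightarrow$(ii), which follows by noting that $P(X)\in M_N(\mathcal{R})$, diagonalizing it over the division ring $\mathcal{R}$ via Lemma \ref{lem:diagonalization} as $UP(X)V=\mathrm{diag}(\1_r,\0)$ with $r=\rho_{\mathcal{R}}(P(X))$, and then using Lemma \ref{inverstible matrix preserve rank} (applicable since $U,V$ are invertible in $M_N(\A)$) to conclude $\mathrm{rank}(P(X))=r$.

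The two substantial implications (i)$\Rightarrow$(iii) and (v)$\Rightarrow$(iii) share a common Schur-complement construction. Given a non-zero $r\in\mathcal{R}$, I would write $r=e_i^\top P(X)^{-1}e_j$ for some $P\in M_N(\C\langle x_1,\dots,x_n\rangle)$ whose evaluation $P(X)$ is invertible in $M_N(\A)$, and form the augmented matrix
$$Q:=\begin{pmatrix} P & e_j \\ e_i^\top & 0 \end{pmatrix} \in M_{N+1}(\C\langle x_1,\dots,x_n\rangle).$$
For (i)$\Rightarrow$(iii): since $P(X)^{-1}\in M_N(\mathcal{R})$ by definition of the rational closure, $P(X)$ is invertible already over the subalgebra $\mathcal{R}\subseteq \A$, and $\mathcal{R}$ inherits stable finiteness from $\A$. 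Proposition \ref{prop:invertible minor} applied over $\mathcal{R}$ gives $\rho_{\mathcal{R}}(Q(X))\geq N$, with equality precisely when the Schur complement $-r$ vanishes; since $r\neq 0$, this forces $\rho_{\mathcal{R}}(Q(X))=N+1$, i.e.\ $Q(X)$ is full over $\mathcal{R}$. Hypothesis (i) then makes $Q(X)$ invertible in $M_{N+1}(\A)$, and Lemma \ref{lem:Schur complement} translates this into invertibility of $r$ in $\A$; since $\mathcal{R}$ is closed under inverses in $\A$, $r^{-1}\in\mathcal{R}$, showing (iii).

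For (v)$\Rightarrow$(iii), I would keep the same $Q$ but use the invertibility of $P(X)$ in $M_N(\A)$ directly to perform block-triangular elimination over $\A$:
$$Q(X)=\begin{pmatrix} \1_N & \0 \\ e_i^\top P(X)^{-1} & 1 \end{pmatrix} \begin{pmatrix} P(X) & \0 \\ \0 & -r \end{pmatrix} \begin{pmatrix} \1_N & P(X)^{-1}e_j \\ \0 & 1 \end{pmatrix},$$
where the outer factors are invertible in $M_{N+1}(\A)$. Lemma \ref{inverstible matrix preserve rank}, combined with additivity of $\mathrm{rank}$ on block-diagonal operators, gives $\mathrm{rank}(Q(X))=N+\mathrm{rank}(r)$. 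The strong Atiyah property applied to $Q\in M_{N+1}(\C\langle x_1,\dots,x_n\rangle)$ forces $\mathrm{rank}(r)\in\N\cap[0,1]=\{0,1\}$; faithfulness of $\tau$ and $r\neq 0$ rule out $\mathrm{rank}(r)=0$, so $\mathrm{rank}(r)=1$, and Lemma \ref{lem:kernels unbdd} together with Lemma \ref{invertible and rank} yield that $r$ is invertible in $\A$. Hence $r^{-1}\in\mathcal{R}$, proving $\mathcal{R}$ is a division ring.

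The main obstacle is the implication (v)$\Rightarrow$(iii): the strong Atiyah property is a priori only a rank condition on matrices of polynomials, so translating this integrality into an invertibility statement for an arbitrary element of the rational closure is not formal. The Schur-complement construction bridges the gap, encoding the invertibility of a single $r\in\mathcal{R}$ into a rank statement for one specific polynomial matrix of one size larger. A minor technical point along the way is the additivity of the analytic $\mathrm{rank}$ on block-diagonal operators, which follows from the orthogonal decomposition of both the kernel and the closed range block by block.
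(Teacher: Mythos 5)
Your proposal is correct and follows essentially the same route as the paper: the same cycle of easy implications, the same Schur-complement bordering of $P$ to establish (i)$\Rightarrow$(iii), and the same block-triangular elimination to reduce (v)$\Rightarrow$(iii) to the integrality of $\rank(r)$. The only cosmetic differences are that you use standard basis vectors $e_i^\top, e_j$ where the paper writes general scalar row/column vectors $u,v$, and your $Q$ places the zero in the lower-right corner instead of the upper-left; neither affects the argument.
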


\begin{proof}
It's easy to see that (ii)$\implies$(i) follows from Lemma \ref{invertible and rank}; and that the equivalence between (iii) and (iv) is Proposition \ref{prop:division closure}.

(iii)$\implies$(ii): Assume that the evaluation of $P\in M_{N}(\C\left\langle x_{1},\dots,x_{n}\right\rangle)$ has inner rank $\rho_{\mathcal{R}}(P(X))=r$. By Lemma \ref{lem:diagonalization}, there exist two invertible matrices $U$ and $V$ over the division ring $\mathcal{R}$ such that
\[
UP(X)V=\begin{pmatrix}\1_r & \0\\\0 & \0\end{pmatrix}.
\]
Then by Lemma \ref{inverstible matrix preserve rank} we have
\[
\rank(P(X))=\rank(UP(X)V)=\rank\begin{pmatrix}\1_{r} & \0\\\0 & \0\end{pmatrix}=r.
\]

(i)$\implies$(iii): For any nonzero $r\in\mathcal{R}$ , there exist a matrix $P\in M_{N}(\C\left\langle x_{1},\dots,x_{n}\right\rangle )$, $u\in M_{1,N}(\C)$, $v\in M_{N,1}(\C)$ such that $P(X)$ is invertible in $M_{N}(\A$) and $r=uP(X)^{-1}v$ (see Section \ref{subsec:rational closure}). Then Schur's Lemma (Lemma \ref{lem:Schur complement}) tells us that the matrix
\[
A:=\begin{pmatrix}0 & u\\v & P(X)\end{pmatrix}
\]
is invertible if and only if its Schur complement, $-uP(X)^{-1}v$, is invertible. Therefore, in order to show that $r=uP(X)^{-1}$ is invertible, it suffices to show that the matrix $A$ is full over $\mathcal{R}$ by the assumption (i). For that purpose, we can apply Proposition \ref{prop:invertible minor} as $\mathcal{R}$ is stably finite (because it is a subalgebra of $\A$ which is stably finite), then we see that $A$ is full over $\mathcal{R}$ since $-uP(X)^{-1}v=-r\neq0$. Moreover, because the rational closure $\mathcal{R}$ is closed under taking inverses, $\mathcal{R}$ becomes a division ring.

Finally, it remains to prove that the assertion (v) is equivalent to the first four assertions. It is clear that (ii) implies (v) trivially, as the inner rank is always an integer by definition. To complete the proof, we want to prove (iii) from assertion (v). Given any nonzero $r\in\mathcal{R}$, there exists a matrix $P\in M_{N}(\C\left\langle x_{1},\dots,x_{n}\right\rangle )$, $u\in M_{1,N}(\C)$, $v\in M_{N,1}(\C)$ such that $P(X)$ is invertible in $M_{N}(\A$) and $r=uP(X)^{-1}v$. Considering the factorization
\[\begin{pmatrix}-r & \0\\\0 & \1_{N}\end{pmatrix}=\begin{pmatrix}1 & -uP(X)^{-1}\\\0 & P(X)^{-1}\end{pmatrix}\begin{pmatrix}0 & u\\v & P(X)\end{pmatrix}\begin{pmatrix}1 &\0\\-P(X)^{-1}v & \1_{N}\end{pmatrix},\]
by Lemma \ref{inverstible matrix preserve rank} we have
\[\rank\begin{pmatrix}-r & \0\\\0 & \1_{N}\end{pmatrix}=\rank\begin{pmatrix}0 & u\\v & P(X)\end{pmatrix}.\]
Then by assertion (v), we know
\[
\rank\begin{pmatrix}0 & u\\v & P(X)\end{pmatrix}\in\N;
\]
combining this with the fact that
\[\rank\begin{pmatrix}-r & \0\\\0 & \1_{N}\end{pmatrix}=\rank r+N,\]
we obtain $\rank(r)\in\{0,1\}$. Then, as $r\neq0$, we have $\rank(r)=1$, and thus $r$ is invertible by Lemma \ref{invertible and rank}. Hence $\mathcal{R}$ is a division ring as $\mathcal{R}$ is closed under taking inverses.
\end{proof}

\begin{remark}
Theorem \ref{thm:Atiyah-2} says in particular that the strong Atiyah property is also equivalent to the property that the division closure is a division ring. Actually, this equivalence is known for the group case; see \cite[Lemma 10.39]{Luc02}.
\end{remark}

\begin{remark}
We can choose the tuple $X$ in Theorem \ref{thm:Atiyah-2} to be a tuple of commuting operators; in particular, $X$ can be chosen to be a tuple of classical random variables. In such cases, the evaluation map $\ev_X$ can be regarded as the evaluation map on the algebra of commutative polynomials $\C\left[x_1,\dots,x_n\right]$, and then it can also be extended (as large as possible) to the field of fractions $\C\left(x_1,\dots,x_n\right)$. Therefore, Theorem \ref{thm:Atiyah-2} can be stated for commuting operators where we simply replace the noncommutative polynomials by commutative ones and consider the corresponding rational closure and division closure.
\end{remark}

\section{Maximality of $\Delta$}

Our main result, Theorem \ref{thm:intro-main}, gives a characterization of various properties of a tuple of operators $X=(X_1,\dots,X_n)$ in terms of the non-commutative distribution of  $X$. However, this characterization is in terms of the quantity $\Delta$, which is not so easy to calculate directly. Thus it is important to have other, more accessible, criteria to decide upon whether $\Delta$ is maximal. One important quantity in this context is the free entropy dimension $\delta^*$; it turns out that maximality of this implies also maximality of $\Delta$; and we have a lot of free probability tools for deciding upon maximality of $\delta^*$. Actually, in a first version of this paper, our results were formulated under the assumption of maximality of $\delta^*$, before we noticed that we can get even an equivalence if we use $\Delta$ instead. One drawback of $\delta^*$ is that this is only defined for selfadjoint variables; and hence it does not allow us to address the question whether the unitary generators of the free group generate the free field. Since we also wanted to include Linnell's positive answer to this in our case, we took actually quite some efforts to formulate and prove our main theorems for general, not necessarily selfadjoint, tuples. But in order to apply this then to the free group case we need another way of checking that $\Delta$ is maximal in this case. It turns out that another important conceptual tool in free probability, so-called dual systems, allows to do this. Usually dual systems are also only considered for selfadjoint operators, but in contrast to the free entropy dimension situation, an extension to non-selfadjoint operators is here not a problem.

\subsection{Free entropy dimension}\label{sect:free-entropy-dimension}

Voiculescu introduced non-microstates free versions of Fisher information, $\Phi^*(X_1,\dots,X_n)$, and entropy, $\chi^*(X_1,\dots,X_n)$, for tuples of selfadjoint operators. For a precise definition of those and their properties we refer to \cite{Voi98}.

The \emph{non-microstates free entropy dimension} $\delta^\ast(X_1,\dots,X_n)$ is defined in terms of the non-microstates free entropy $\chi^\ast$ by
$$\delta^\ast(X_1,\dots,X_n) := n - \liminf_{\epsilon \searrow 0} \frac{\chi^\ast(X_1+\sqrt{\epsilon}S_1,\dots,X_n+\sqrt{\epsilon}S_n)}{\log(\sqrt{\epsilon})}.$$
In the case $n=1$ of a single operator $X=X^\ast\in\M$ one has that
\begin{equation}\label{eq:entropy_dimension_single_operator}
\delta^\ast(X) = 1 - \sum_{t\in\R} \mu_X(\{t\})^2,
\end{equation}
where $\mu_X$ is the analytic distribution of $X$. In particular, $\delta^\ast(X)=1$ is then exactly the statement that $\mu_X$ has no atoms.

We note that in \cite{CS05} some variant of $\delta^\ast(X_1,\dots,X_n)$ was introduced, namely
$$\delta^\star(X_1,\dots,X_n) := n - \liminf_{t\searrow0} t \Phi^\ast(X_1+\sqrt{t}S_1,\dots,X_n+\sqrt{t}S_n),$$
whose defining expression is formally obtained by applying L'Hospital's rule to the $\liminf$ appearing in the definition of $\delta^\ast(X_1,\dots,X_n)$. 

It was shown in \cite{CS05} that
\[
\delta^\ast(X_1,\dots,X_n) \leq \delta^\star(X_1,\dots,X_n)\leq\Delta(X_1,\dots,X_n)\leq n
\]
holds. So the condition $\Delta(X_1,\dots,X_n)=n$  sits at the end of the following chain of implications
\begin{align*}
 \Phi^\ast(X)<\infty & \implies \chi^\ast(X_1,\dots,X_n)>-\infty\\
  & \implies \delta^\ast(X_1,\dots,X_n)=n\\
  & \implies \delta^\star(X_1,\dots,X_n)=n\\
  & \implies \Delta(X_1,\dots,X_n)=n.
\end{align*}

This means in particular that under the assumption $ \delta^\star(X_1,\dots,X_n)=n$ we get all the implications which we have proved under the assumption $\Delta(X_1,\dots,X_n)=n$. 
This shows in particular that our results here 
generalize the result in \cite{MSW17,CS16}, where the absence of zero divisors was shown for polynomials in operator tuples with maximal free entropy dimension $\delta^\star$. 

It is of course important to note that we have many tools for deciding upon the maximality of the free entropy dimension; in particular, this is given for free variables if each of them has non-atomic distributions. 

\subsection{Dual systems} Dual systems were introduced by Voiculescu in \cite{Voi98}, and appeared also in \cite{CS05} as an important technical tool for getting the maximality of the free entropy dimension. There only the selfadjoint situation was considered. We adapt here those ideas to the general case.

\begin{definition}
Let $X_1,\dots,X_n$ be a operators in a tracial $W^*$-probability space $(\M,\tau)$. A \emph{dual system} to $X_1,\dots,X_n$ are operators $D_1,\dots,D_n\in B(L^2(\M,\tau))$ such that we have
$$[X_i,D_j]=\delta_{ij}P_\Omega,$$
where $P_\Omega\in B(L^2(\M,\tau))$ is the orthogonal projection onto the trace vector $\Omega$ in $L^2(\M,\tau)$.
\end{definition}

\begin{proposition}
Let $(X_1,\dots,X_n)$ be a tuple of operators in a tracial $W^*$-probability space $(\M,\tau)$. If $X_1,\dots,X_n$ has a dual system, then
$\Delta(X_1,\dots,X_n)=n$.
\end{proposition}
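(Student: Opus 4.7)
The plan is to use the reformulation stated in the remark at the beginning of Section~\ref{sec:maximality of Delta}: since conjugation by the modular involution $J$ provides a linear bijection of the space of finite rank operators on $L^2(\M,\tau)$ that sends $JX_jJ$ to $X_j$, the assertion $\Delta(X_1,\dots,X_n)=n$ is equivalent to the statement that the only tuple $(S_1,\dots,S_n)$ of finite rank operators satisfying $\sum_{j=1}^n [S_j,X_j]=0$ is the zero tuple. So, given such a tuple $(S_j)$, my task is to use the dual system $D_1,\dots,D_n$ to conclude $S_i=0$ for every $i$.

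The first ingredient is a simple trace identity: for any finite rank $F\in B(L^2(\M,\tau))$, the operators $D_iFX_j$ and $D_iX_jF$ are again finite rank, so trace-cyclicity together with the defining relation $[X_j,D_i]=\delta_{ij}P_\Omega$ yields
\[
\Tr(D_i [F,X_j]) = \Tr(F [X_j,D_i]) = \delta_{ij}\,\langle F\Omega,\Omega\rangle.
\]
Applied directly to $F_j=S_j$ this delivers only the single scalar equation $\langle S_i\Omega,\Omega\rangle=0$ per index $i$, which is manifestly too weak. The conceptually decisive step---the one I expect to be the main obstacle to discover rather than to execute---is to produce many such tuples from the given one by exploiting the commutant $\M'\subseteq B(L^2(\M,\tau))$. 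For any $a,b\in\M'$, since $a$ and $b$ commute with each $X_j\in\M$, the derivation identity collapses to
\[
\sum_{j=1}^n [bS_j a, X_j] \;=\; b\Bigl(\sum_{j=1}^n [S_j,X_j]\Bigr) a \;=\; 0,
\]
so $(bS_j a)_{j=1}^n$ is a new tuple of finite rank operators satisfying the same commutator relation. Applying the trace identity to this enhanced tuple gives
\[
\langle bS_i a\,\Omega,\Omega\rangle \;=\; \Tr\!\Bigl(D_i\sum_{j=1}^n[bS_j a,X_j]\Bigr) \;=\; 0 \qquad\text{for all $a,b\in\M'$ and all $i$.}
\]

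To finish I would invoke Tomita's theorem: because $\M$ acts in standard form on $L^2(\M,\tau)$, the vector $\Omega$ is cyclic and separating for $\M'=J\M J$, and hence both $\{a\Omega\mid a\in\M'\}$ and $\{b^\ast\Omega\mid b\in\M'\}$ are dense subspaces of $L^2(\M,\tau)$. Rewriting the vanishing identity as $\langle S_i(a\Omega),b^\ast\Omega\rangle=0$ shows that the matrix coefficients of $S_i$ between two dense subspaces all vanish, whence $S_i=0$. Doing this for every $i$ completes the proof. The only technical point that needs care is that $D_i$ must merely be bounded for trace-cyclicity with the finite rank operators $bS_j a$ to be legitimate; the real content of the argument is the ``$\M'$-enhancement'' trick which upgrades a single scalar pairing into a dense family of vanishing matrix elements.
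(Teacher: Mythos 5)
Your argument is correct and is essentially the same as the paper's proof, just reorganized. The paper computes $0 = \Tr\bigl(\sum_i [T_i,X_i]\,Q_1 D_j Q_2\bigr)$ for arbitrary $Q_1,Q_2\in\M'$ and uses trace cyclicity plus $[X_i,Q_k]=0$ to pass $Q_1, Q_2$ through to the dual system relation, ending at $\langle Q_2 T_j Q_1\Omega,\Omega\rangle=0$; you instead observe that $(b S_j a)_j$ is again a finite-rank $1$-cycle for any $a,b\in\M'$ and then apply your cleanly stated trace identity with $D_i$, ending at $\langle b S_i a\Omega,\Omega\rangle=0$. These are the same computation modulo cyclicity (identify $a=Q_1$, $b=Q_2$, $S=T$), and both close by cyclicity of $\Omega$ for $\M'$. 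One cosmetic imprecision: $T\mapsto JTJ$ is conjugate-linear rather than linear, but this does not affect the equivalence you invoke, since it still bijects zero with zero and preserves finite rank.
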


\begin{proof}
Let $D_1,\dots,D_n$ be a dual system for $X_1,\dots,X_n$.
Assume that $T_1,\dots,T_n$ are finite rank operators on $L^2(\M,\tau)$ satisfying
$$\sum_{i=1}^n [T_i,X_i]=0.$$
Let $Q_1$ and $Q_2$ be in the commutant of $\M$ on $L^2(\M,\tau)$. Then we have
\begin{align*}
0&=\Tr\left(\sum_i [T_i,X_i] Q_1D_jQ_2\right)\\
&=\sum_i\Tr\left(T_iX_iQ_1D_jQ_2-X_iT_iQ_1D_jQ_2\right)\\
&=\sum_i\Tr\left(T_iQ_1(X_iD_j-D_jX_i)Q_2\right)\\
&=\Tr(T_jQ_1P_\Omega Q_2)\\
&=\langle Q_2T_jQ_1 \Omega,\Omega\rangle\\
&=\langle T_j Q_1\Omega, Q_2^* \Omega\rangle.
\end{align*}
Since $\Omega$ is cyclic for the commutant, we can get with $Q_1\Omega$ and $Q_2^*\Omega$ any vector from a dense subset of $L^2(\M,\tau)$. But then the above implies that $T_j=0$, for each $j$.
\end{proof}

We want to apply this now to the generators of the free group. Thus we have to present a dual system for them. The idea is that in the case of one variable, the dual operator for the two-sided shift is given by the one-sided shift, and then one can lift this via taking free products to the $n$ variable case, by imitating Proposition 5.6 from \cite{Voi98}.

\begin{proposition}
Let $U_1=\lambda(g_1),\dots,U_n=\lambda(g_n)$ be the generators of the free group $\FF_n$ in the left regular representation on $L^2(\FF_n)$. For each $i$, let $\FF_n^{(i)}$ be those elements from the free group which end in their reduced form with $g_i$, and let $\FF_n^{[i]}$ be its complement. 
Then we define the operator $V_i$ as follows: restriced to $L^2(\FF_n^{(i)})$ it acts by right multiplication with $g_i^{-1}$, and restricted to $L^2(\FF_n^{[i]})$ it is zero.
Then $V_1,\dots,V_n$ is a dual system for $U_1,\dots,U_n$.
\end{proposition}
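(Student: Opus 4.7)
The plan is to verify the dual system relation $[U_i, V_j] = \pm \delta_{ij} P_\Omega$ by direct action computation on the canonical orthonormal basis $\{\delta_h\}_{h \in \FF_n}$ of $L^2(\FF_n)$; the sign is inessential, since $-V_j$ in place of $V_j$ yields a genuine dual system and the foregoing proposition is insensitive to this modification. As a preliminary, each $V_i$ is bounded of norm at most one: the prescription $\delta_{h'g_i}\mapsto\delta_{h'}$ realizes a partial isometry from $L^2(\FF_n^{(i)})$ onto $L^2(\FF_n\setminus \FF_n^{(i^{-1})})$, vanishing on the orthogonal complement.

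The crux of the argument is a combinatorial claim about reduced forms in $\FF_n$: for every $h\in\FF_n$ and every index $i$, the reduced form of $g_i h$ ends in $g_j$ if and only if that of $h$ does, with the sole exception $h = e$ and $j=i$, where $g_ih = g_i$ ends in $g_i$ while $h$ ends in nothing. I would establish this by splitting on whether $h$ begins with $g_i^{-1}$: if not, $g_ih$ is simply the concatenation, with the same rightmost letter as $h$; if $h = g_i^{-1} h'$ with $h'$ reduced, then $g_ih = h'$, and the rightmost letter is again preserved unless $h'=e$, which is the exceptional case.

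Granted the claim, the commutator computation splits into four cases according to whether $h$ and $g_ih$ lie in $\FF_n^{(j)}$. When both do, $U_iV_j\delta_h = \delta_{g_ihg_j^{-1}} = V_jU_i\delta_h$ and the commutator vanishes; when neither does, both terms vanish trivially. The claim forces the remaining asymmetric case to occur precisely once, namely $i=j$ and $h=e$, where $U_iV_i\delta_e = 0$ while $V_iU_i\delta_e = V_i\delta_{g_i} = \delta_e$. Hence $[U_i,V_j] = -\delta_{ij}P_\Omega$, and the preceding proposition delivers $\Delta(U_1,\dots,U_n)=n$, recovering Linnell's theorem via Theorem \ref{thm:intro-main}.

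The main obstacle is articulating the reduced word analysis cleanly: one must argue that left multiplication by $g_i$ cannot modify the tail of $h$ except through a cascade of cancellations that consumes the entire word, which forces $h = g_i^{-1}$ (whence $g_ih = e$). The bookkeeping to make this rigorous across every case is where the most care is needed, but once the claim is pinned down the rest of the argument is a routine verification on basis vectors.
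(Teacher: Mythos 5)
Your proof is correct and follows essentially the same strategy as the paper's: compute $[U_i,V_j]$ directly on the basis vectors $\delta_h$, $h\in\FF_n$. Your packaging via the combinatorial lemma --- that left multiplication by $g_i$ preserves membership in $\FF_n^{(j)}$ except for the single instance $h=e$, $j=i$ --- is a tidy reorganization of the paper's case-by-case check, and your sign observation is a genuine catch: the paper's displayed computation for $h=e$ actually evaluates to $0-V_j\delta_{g_i}=-\delta_{ij}\delta_e$, not $\delta_{ij}\delta_e$ as written, so strictly speaking the dual system as defined is $(-V_1,\dots,-V_n)$; as you say, this convention is immaterial for the preceding proposition, which only uses the commutator to force $T_j=0$. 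One small misstatement in your second paragraph: you locate the exceptional case in the branch $h=g_i^{-1}h'$ with $h'=e$, but that instance (namely $h=g_i^{-1}$, so $g_ih=e$) is in fact \emph{consistent} with the claim, since neither $g_i^{-1}$ nor $e$ ends in any $g_j$; the genuine exception lives in the other branch, $h=e$, where $e$ has no last letter while $g_ih=g_i$ ends in $g_i$. Your third paragraph identifies the exception correctly as $h=e$, $j=i$, so the argument as a whole stands.
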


\begin{proof}
It suffices to check the commutator relation $[U_i,V_j]=\delta_{ij}P$ (where $P$ is the projection onto the neutral element) for vectors which correspond to group elements $h\in \FF_n$. Let $h=e$ be the neutral element. Then
$$(U_iV_j-V_jU_i)e=0-V_jg_i=\delta_{ij} e.$$
Consider now an element $h=h'g_j$ which ends with a $g_j$. Note that $h'$ cannot end with a $g_j^{-1}$. Then we have
$$(U_iV_j-V_jU_i)h'g_j=U_i h'-V_jg_ih'g_j=g_ih'-g_ih'=0.
$$
Now consider an element $h=h'g$ which ends with a generator or its inverse, with $g\not= g_j$. Then
$$(U_iV_j-V_jU_i)h'g=0-V_j g_ih'g=0
$$

\end{proof}

This gives then as a corollary the result of Linnell \cite{Lin93} for the free group.

\begin{corollary}
Let $U_1,\dots,U_n$ be the 
the generators of the free group $\FF_n$ in the left regular representation on $L^2(\FF_n)$. Then $\Delta(U_1,\dots,U_n)=n$, and hence the division (and the rational) closure of the polynomials in $U_1,\dots,U_n$ in the unbounded operators affiliated to $L(\FF_n)$ is the free field.
\end{corollary}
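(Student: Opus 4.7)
The strategy is to chain the two immediately preceding propositions of this subsection with our main equivalence Theorem \ref{thm:intro-main}. The second preceding proposition already produces explicit operators $V_1,\dots,V_n \in B(L^2(\FF_n))$ forming a dual system for $U_1,\dots,U_n$, the key check being the commutator identity $[U_i,V_j] = \delta_{ij} P_\Omega$, which is verified case by case on the orthonormal basis of group elements (neutral element, words ending in $g_j$, words ending in another generator or its inverse). The first proposition in this subsection then immediately upgrades the existence of this dual system to the conclusion $\Delta(U_1,\dots,U_n)=n$.

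With maximality of $\Delta$ in hand, the plan is simply to feed this into Theorem \ref{thm:intro-main}. Its property (v) is exactly the hypothesis we have just established, and property (i) is exactly what we want: the evaluation map $\ev_U:\C\langle x_1,\dots,x_n\rangle \to \A$ extends to an injective homomorphism $\Ev_U:\C\plangle x_1,\dots,x_n\prangle \to \A$ whose image coincides with the division closure $\C\plangle U_1,\dots,U_n\prangle \subset \A$. This is precisely Linnell's statement that this division closure realizes the free field.

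For the remaining claim about the rational closure, I would invoke the remark following Theorem \ref{thm:Atiyah-1}, which itself rests on Proposition \ref{prop:division closure}: whenever the division closure is a division ring (as the free field certainly is), it automatically coincides with the rational closure of $\C\langle U_1,\dots,U_n\rangle$ with respect to $\ev_U$. Hence both closures agree with the image of the free field.

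There is essentially no technical obstacle to overcome at this stage, since all heavy lifting was done in Sections \ref{sec:maximality of Delta} and \ref{sec:affiliated_operators_and_Atiyah} and in the two preceding propositions of the present subsection. The one point worth noting explicitly is that Theorem \ref{thm:intro-main} was designed to apply to \emph{not necessarily selfadjoint} tuples, which is precisely why the unitary generators $U_1,\dots,U_n$ of $\FF_n$ are covered despite the fact that $\delta^\ast$ is unavailable for them; the whole reason for developing the dual-system route in the non-selfadjoint generality was to make this corollary a direct consequence of the general theory.
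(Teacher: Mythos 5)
Your proposal is correct and follows exactly the route the paper intends: you obtain the dual system from the preceding proposition, conclude $\Delta(U_1,\dots,U_n)=n$ via the proposition on dual systems implying maximality of $\Delta$, feed this into Theorem \ref{thm:intro-main} (equivalence of (v) and (i)) to identify the division closure with the free field, and then use Proposition \ref{prop:division closure} to conclude that the rational closure coincides with the division closure. This is precisely the chain of reasoning the paper has set up in Sections \ref{sec:maximality of Delta}--\ref{sec:affiliated_operators_and_Atiyah} and in the present subsection.
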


\bibliographystyle{amsalpha}
\bibliography{rational_regularity}

\newcommand{\etalchar}[1]{$^{#1}$}
\providecommand{\bysame}{\leavevmode\hbox to3em{\hrulefill}\thinspace}
\providecommand{\MR}{\relax\ifhmode\unskip\space\fi MR }
\providecommand{\MRhref}[2]{%
  \href{http://www.ams.org/mathscinet-getitem?mr=#1}{#2}
}
\providecommand{\href}[2]{#2}
\begin{thebibliography}{GGOW16}

\bibitem[AEK18a]{AjEK18}
O.~H. {Ajanki}, L.~{Erd{\"o}s}, and T.~{Kr{\"u}ger}, \emph{{Stability of the
  Matrix Dyson Equation and Random Matrices with Correlations}},
  {arXiv:1604.08188v5} (2018).

\bibitem[AEK18b]{AEK18}
J.~{Alt}, L.~{Erd{\"o}s}, and T.~{Kr{\"u}ger}, \emph{{The Dyson equation with
  linear self-energy: spectral bands, edges and cusps}}, {arXiv:1804.07752v1}
  (2018).

\bibitem[{Ami}66]{Ami66}
S.~{Amitsur}, \emph{{Rational identities and applications to algebra and
  geometry}}, {J. Algebra} \textbf{3} (1966), no.~3, 304--359.

\bibitem[{And}12]{And12}
G.~W. {Anderson}, \emph{{Support properties of spectra of polynomials in Wigner
  matrices}}, {lecture notes} (2012).

\bibitem[{And}13]{And13}
\bysame, \emph{{Convergence of the largest singular value of a polynomial in
  independent Wigner matrices}}, {Ann. Probab.} \textbf{41} (2013), no.~3B,
  2103--2181.

\bibitem[{Ati}74]{Ati74}
M.~F. {Atiyah}, \emph{{Elliptic operators and compact groups.}}, vol. 401,
  Springer, Cham, 1974.

\bibitem[BMS17]{BMS17}
S.~T. {Belinschi}, T.~{Mai}, and R.~{Speicher}, \emph{{Analytic subordination
  theory of operator-valued free additive convolution and the solution of a
  general random matrix problem.}}, {J. Reine Angew. Math.} \textbf{732}
  (2017), 21--53.

\bibitem[CDSZ17]{CDSZ17}
I.~{Charlesworth}, K.~{Dykema}, F.~{Sukochev}, and D.~{Zanin},
  \emph{Simultaneous upper triangular forms for commuting operators in a finite
  von neumann algebra}, arXiv preprint arXiv:1703.05695 (2017).

\bibitem[{Coh}85]{Coh85}
P.~M. {Cohn}, \emph{{Free rings and their relations. 2nd ed.}}, {London
  Mathematical Society Monographs, No. 19. London etc.: Academic Press.}, 1985.

\bibitem[{Coh}95]{Coh95}
\bysame, \emph{{Skew fields, theory of general division rings}}, vol.~56,
  Cambridge: Cambridge University Press, 1995.

\bibitem[{Coh}06]{Coh06}
\bysame, \emph{{Free ideal rings and localization in general rings}},
  Cambridge: Cambridge University Press, 2006.

\bibitem[CR94]{CR94}
P.~M. {Cohn} and C.~{Reutenauer}, \emph{{A normal form in free fields}}, {Can.
  J. Math.} \textbf{46} (1994), no.~3, 517--531.

\bibitem[CR99]{CR99}
\bysame, \emph{{On the construction of the free field}}, {Int. J. Algebra
  Comput.} \textbf{9} (1999), no.~3-4, 307--324.

\bibitem[CS05]{CS05}
A.~{Connes} and D.~{Shlyakhtenko}, \emph{{$L^2$-homology for von Neumann
  algebras}}, {J. Reine Angew. Math.} \textbf{586} (2005), 125--168.

\bibitem[CS16]{CS16}
I.~{Charlesworth} and D.~{Shlyakhtenko}, \emph{{Free entropy dimension and
  regularity of non-commutative polynomials}}, {J. Funct. Anal.} \textbf{271}
  (2016), no.~8, 2274--2292.

\bibitem[DLM{\etalchar{+}}03]{DLMSY03}
J.~Dodziuk, P.~Linnell, V.~Mathai, T.~Schick, and S.~Yates, \emph{Approximating
  l2-invariants and the atiyah conjecture}, Communications on pure and applied
  mathematics \textbf{56} (2003), no.~7, 839--873.

\bibitem[EKYY13]{EKYY13}
L.~{Erd{\"o}s}, A.~{Knowles}, H.-T. {Yau}, and J.~{Yin}, \emph{{The local
  semicircle law for a general class of random matrices}}, {Electron. J.
  Probab.} \textbf{18} (2013), 58.

\bibitem[GGOW16]{GGOW16}
A.~{Garg}, L.~{Gurvits}, R.~{Oliveira}, and A.~{Wigderson}, \emph{{A
  Deterministic Polynomial Time Algorithm for Non-Commutative Rational Identity
  Testing with Applications}}, 2016 IEEE 57th Annual Symposium on Foundations
  of Computer Science (FOCS), 2016, pp.~109--117.

\bibitem[GLS{\.Z}00]{GLSZ00}
R.~Grigorchuk, P.~Linnell, T.~Schick, and A.~{\.Z}uk, \emph{On a question of
  atiyah}, Comptes Rendus de l'Acad{\'e}mie des Sciences-Series I-Mathematics
  \textbf{331} (2000), no.~9, 663--668.

\bibitem[HMS18]{HMS18}
J.~W. {Helton}, T.~{Mai}, and R.~{Speicher}, \emph{{Applications of
  realizations (aka linearizations) to free probability}}, {J. Funct. Anal.}
  \textbf{274} (2018), no.~1, 1--79.

\bibitem[HST06]{HST06}
U.~{Haagerup}, H.~{Schultz}, and S.~{Thorbj\o rnsen}, \emph{{A random matrix
  approach to the lack of projections in
  $\text{C}_{\text{red}}^{\ast}\left(F_{2}\right)$}}, {Adv. Math.} \textbf{204}
  (2006), no.~1, 1--83.

\bibitem[HT05]{HT05}
U.~{Haagerup} and S.~{Thorbj\o rnsen}, \emph{{A new application of random
  matrices:
  $\text{Ext}\left(\text{C}_{\text{red}}^{\ast}\left(F_{2}\right)\right)$ is
  not a group}}, {Ann. of Math.} \textbf{162} (2005), no.~2, 711--775.

\bibitem[KV12]{KV12}
D.~S. {Kaliuzhnyi-Verbovetskyi} and V.~{Vinnikov}, \emph{{Non-commutative
  rational functions, their difference-differential calculus and
  realizations}}, {Multidimens. Syst. Signal Process.} \textbf{23} (2012),
  no.~1-2, 49--77.

\bibitem[{Lin}93]{Lin93}
P.~A. {Linnell}, \emph{{Division rings and group von Neumann algebras}},
  {‎Forum Math.} \textbf{5} (1993), no.~6, 561--576.

\bibitem[LR13]{LR13}
A.~{Lauve} and C.~{Reutenauer}, \emph{Rational series in the free group and the
  connes operator}, Contemporary Mathematics \textbf{592} (2013), 177--197.

\bibitem[L{\"u}c02]{Luc02}
W.~L{\"u}ck, \emph{{$L^{2}$-Invariants: Theory and Applications to Geometry and
  K-Theory}}, Berlin: Springer, 2002.

\bibitem[MSW17]{MSW17}
T.~{Mai}, R.~{Speicher}, and M.~{Weber}, \emph{{Absence of algebraic relations
  and of zero divisors under the assumption of full non-microstates free
  entropy dimension}}, {Adv. Math.} \textbf{304} (2017), 1080--1107.

\bibitem[Pis18]{Pis18}
G.~Pisier, \emph{{On a linearization trick}}, {arXiv:1802.03742} (2018).

\bibitem[PT11]{PTh11}
J.~Peterson and A.~Thom, \emph{Group cocycles and the ring of affiliated
  operators}, Inventiones mathematicae \textbf{185} (2011), no.~3, 561--592.

\bibitem[{Rei}06]{Rei06}
H.~{Reich}, \emph{{$L^2$-Betti numbers, isomorphism conjectures and
  noncommutative localization}}, {Noncommutative localization in algebra and
  topology. London Math. Soc. Lecture Note Ser.}, vol. 330, Cambridge:
  Cambridge Univ. Press, 2006, pp.~103--142.

\bibitem[{Sch}18]{Sch18}
K.~{Schrempf}, \emph{{Free Fractions: An Invitation to (applied) Free Fields}},
  arXiv e-prints (2018), arXiv:1809.05425.

\bibitem[SS15]{SS15}
D.~{Shlyakhtenko} and P.~{Skoufranis}, \emph{{Freely independent random
  variables with non-atomic distributions}}, {Trans. Am. Math. Soc.}
  \textbf{367} (2015), no.~9, 6267--6291.

\bibitem[SZ79]{SZ}
S.~{Str{\u{a}}til{\u{a}}} and L.~{Zsid{\'o}}, \emph{Lectures on von {N}eumann
  algebras}, Editura Academiei Bucharest, 1979.

\bibitem[{Voi}98]{Voi98}
D.-V. {Voiculescu}, \emph{{The analogues of entropy and of Fisher's information
  measure in free probability theory. V: Noncommutative Hilbert transforms}},
  {Invent. Math.} \textbf{132} (1998), no.~1, 189--227.

\bibitem[{Vol}18]{Vol18}
J.~{Vol\v{c}i\v{c}}, \emph{{Matrix coefficient realization theory of
  noncommutative rational functions}}, {J. Algebra} \textbf{499} (2018),
  397--437.

\bibitem[{Yin}18]{Yin18}
S.~{Yin}, \emph{{Non-commutative rational functions in strongly convergent
  random variables}}, {Adv. Oper. Theory} \textbf{3} (2018), no.~1,
  178–--192.

\end{thebibliography}

\end{document}